\definecolor{blue}{rgb}{0, 0.1, 0.6}
\DeclareFontFamily{OT1}{pzc}{}
\DeclareFontShape{OT1}{pzc}{m}{it}{<-> s * [1.10] pzcmi7t}{}
\DeclareMathAlphabet{\mathpzc}{OT1}{pzc}{m}{it}
\renewenvironment{itemize}
  {\begin{list}{$\triangleright$}{%
   \setlength{\parskip}{0mm}
   \setlength{\topsep}{.4\baselineskip}
   \setlength{\rightmargin}{0mm}
   \setlength{\listparindent}{0mm}
   \setlength{\itemindent}{0mm}
   \setlength{\labelwidth}{3ex}
   \setlength{\itemsep}{.4\baselineskip}
   \setlength{\parsep}{0mm}
   \setlength{\partopsep}{0mm}
   \setlength{\labelsep}{1ex}
   \setlength{\leftmargin}{\labelwidth+\labelsep}
   }}{%
   \end{list}\vspace*{-\parskip}}
\def\E{\exists}
\def\models{\vDash}
\def\pmodels{\mathrel{\models\kern-1.5ex\raisebox{.5ex}{*}}}
\def\RR{\mathds R}
\def\supp{\mathop{\rm supp}}
\def\fp{{\rm fp}}
\def\Th{\textrm{Th}}
\def\Aut{\textrm{Aut\kern.15ex}}
\def\Autf{\mathord{\rm Aut\kern.15ex{f}\kern.15ex}}
\def\Cb{\textrm{Cb\kern.15ex}}
\def\id{{\rm id}}
\def\tp{{\rm tp}}
\newcommand{\cev}[1]{\reflectbox{\ensuremath{\vec{\reflectbox{\ensuremath{#1}}}}}}
\def\nonfork{\mathop{\raise0.2ex\hbox{
   \ooalign{\hidewidth$\vert$\hidewidth\cr\raise-0.9ex\hbox{$\smile$}}}}}
\def\cnonfork{\mathbin{\raise1.8ex\rlap{\kern0.6ex\rule{0.6ex}{0.1ex}}
\rlap{\kern1.1ex\rule{0.1ex}{1.9ex}}\raise-0.3ex\hbox{$\smile$} } }
\def\cpaw{\mathbin{\ooalign{\kern-0.4ex$-$\hidewidth\cr$<$}}}
\def\cpawdot{\ooalign{$\kern1.2ex\cdot$\cr$\cpaw$\cr}}
\def\sm{\smallsetminus}
\def\IMP{\Rightarrow}
\def\IFF{\Leftrightarrow}
\def\isomap{\rlap{\kern0.8ex\raisebox{1ex}{\scriptsize$\sim$}}\rightarrow}
\def\Aa{\EuScript A}
\def\C{\EuScript C}
\def\U{\EuScript U}
\def\G{\EuScript G}
\def\B{\EuScript B}
\def\O{\EuScript O}
\def\<{\langle}
\def\>{\rangle}
\def\0{\varnothing}
\def\theta{\vartheta}
\def\phi{\varphi}
\def\epsilon{\varepsilon}
\def\ssf#1{\textsf{\small #1}}
\titleformat{\section}[block]{\Large\bfseries}{\makebox[5ex][r]{\textbf{\thesection}}}{1.5ex}{}
\titlespacing*{\chapter}{0em}{.5ex plus .2ex minus .2ex}{2.3ex plus .2ex}
\titlespacing*{\section}{-9.7ex}{3ex plus .5ex minus .5ex}{1ex plus .2ex minus .2ex}
\newtheoremstyle{mio}% name
     {2\parskip}%      Space above
     {\parskip}%      Space below
     {\sl}%         Body font
     {}%          Indent amount (empty = no indent, \parindent = para indent)
     {\bfseries}% Thm head font
     {}%        Punctuation after thm head
     {1ex}%     Space after thm head: " " = normal interword space;
\newtheoremstyle{liscio}% name
     {2\parskip}%      Space above
     {0mm}%      Space below
     {}%         Body font
     {}%         Indent amount (empty = no indent, \parindent = para indent)
     {\bfseries}% Thm head font
     {}%        Punctuation after thm head
     {1.5ex}%     Space after thm head: " " = normal interword space;
\newcounter{thm}[section]
\theoremstyle{mio}
\newtheorem{corollary}[thm]{Corollary}
\newtheorem{proposition}[thm]{Proposition}
\newtheorem{lemma}[thm]{Lemma}
\newtheorem{definition}[thm]{Definition}
\theoremstyle{liscio}
\newtheorem{remark}[thm]{Remark}
\newtheorem{notation}[thm]{Notation}
\newtheorem{example}[thm]{Example}
\def\QED{\noindent\nolinebreak[4]\hspace{\stretch{1}}\rlap{\ \ $\Box$}\medskip}
\renewenvironment{proof}[1][Proof]%
{\begin{trivlist}\item[\hskip\labelsep {\bf #1}]}
{\QED\end{trivlist}}
\hskip\labelsep {\bf #1}]}
\QED\end{trivlist}}
\definecolor{violet}{RGB}{115, 0, 205}
\definecolor{brown}{RGB}{150, 50, 10}
\definecolor{green}{RGB}{5,110, 35}
\definecolor{emphcolor}{rgb}{.98,.98,.70}
\def\bl{\color{black}}
\def\mr{\color{brown}}
\def\mrA{{\mr\Aa}}
\def\mrB{{\mr\B}}
\def\mrC{{\mr\C}}
\def\mrG{{\mr\G}}
\def\mrU{{\mr\U}}
\renewcommand*{\emph}[1]{%
   \kern-0.6ex 
   \smash{\tikz[baseline]
   \node[ rectangle,       fill=emphcolor,  rounded corners, 
          inner xsep=.9ex, inner ysep=.2ex, anchor=base,
          minimum height = 3ex
         ]{#1};
   }
   \kern-1.7ex 
}
\begin{document}
\raggedbottom
\begin{center}
   {\huge\bfseries Ramsey's coheirs\\[3ex] \normalfont\normalsize 
   Eugenio Colla and Domenico Zambella\vskip-1ex 
   Università di Torino\vskip-1ex \monthname\ \the\year}
\end{center}

\def\medrel#1{\parbox[t]{6ex}{$\displaystyle\hfil #1$}}

\bigskip\hfil
\parbox{0.9\textwidth}{
   \textbf{Abstract} \ 
   We use the model theoretic notion of coheir to give
   short proofs of old and new theorems in Ramsey Theory.
   As an illustration we start from Ramsey's theorem itself.
   Then we prove Hindman's theorem and the Hales-Jewett theorem.
   Finally, we prove two Ramsey theoretic principles that have among 
   their consequences partition theorems due to Carlson and to Gowers.
   
   Msc: primary 05D10, secondary 03C98, 03H99.
}
%%%%%%%%%%%%%%%%%%%%%%%%%%%%%%%%%%%%%%%%%%%%%%%%%%%%%%%%%%%%%%%%%%%%%%%%%%%%
%%%%%%%%%%%%%%%%%%%%%%%%%%%%%%%%%%%%%%%%%%%%%%%%%%%%%%%%%%%%%%%%%%%%%%%%%%%%
%%%%%%%%%%%%%%%%%%%%%%%%%%%%%%%%%%%%%%%%%%%%%%%%%%%%%%%%%%%%%%%%%%%%%%%%%%%%
%%%%%%%%%%%%%%%%%%%%%%%%%%%%%%%%%%%%%%%%%%%%%%%%%%%%%%%%%%%%%%%%%%%%%%%%%%%%
%%%%%%%%%%%%%%%%%%%%%%%%%%%%%%%%%%%%%%%%%%%%%%%%%%%%%%%%%%%%%%%%%%%%%%%%%%%%
%%%%%%%%%%%%%%%%%%%%%%%%%%%%%%%%%%%%%%%%%%%%%%%%%%%%%%%%%%%%%%%%%%%%%%%%%%%%
%%%%%%%%%%%%%%%%%%%%%%%%%%%%%%%%%%%%%%%%%%%%%%%%%%%%%%%%%%%%%%%%%%%%%%%%%%%%
%%%%%%%%%%%%%%%%%%%%%%%%%%%%%%%%%%%%%%%%%%%%%%%%%%%%%%%%%%%%%%%%%%%%%%%%%%%%
\section{Introduction}\label{intro}

Ramsey theory has substantial and diverse applications 
to many parts of mathematics.
In particular, Ramsey's theorem has foundational applications 
to model theory through the Ehrenfeucth-Mostowski construction 
of indiscernibles and generalizations thereof.
In this paper we explore the converse direction, that is, we use model theory to obtain new proofs of classical results in Ramsey Theory.

The Stone-\v{C}ech compactification, obtained via ultrafilters, is a widely employed method for proving Ramsey theoretic results.
One of its first major applications is the celebrated Galvin-Glazer proof of Hindman's theorem, see e.g.\@ \cite{Blass}.
Our methods are related, but alternative, to the ultrafilter
approach.
We replace $\beta G$ (the Stone-\v{C}ech compactification 
of a semigroup $G$) with a large saturated elementary extension of $G$,
i.e.\@ a monster model of $\Th(G/G)$.
One immediate advantage is that we work with elements of a natural semigroup
with a natural operation.
In contrast, elements of $\beta G$ are ultrafilters, that is, sets of sets, 
and the semigroup operation among ultrafilters is far from straightforward.

This idea is not completely new:
in his seminal work on the applications of topological 
dynamics to model theory\@ \cite{Newelski1, Newelski2}, Newelski 
replaces the semigroup $\beta G$ with the space of types over $G$ with 
a suitably defined operation.
Our approach is similar, except that, unlike Newelski, we do not pursue 
connections with topological dynamics, but rather offer an alternative 
realm of application.
The investigation of alternative methods in the study of regularity phenomena 
has been called for by Di Nasso \cite[Open problem~\#1]{Mauro2}.
This article contains a possible answer.

The model theoretic tools employed in this paper are relatively basic.
Section~\ref{coheirs} is meant to give  an accessible overview of the necessary 
notions for readers whose expertise is not primarily in model theory. 
Our results do not require assumptions of model theoretic tameness such as 
stability, NIP, etc., much like those that use nonstandard analysis, 
for example in \cite{Mauro}.
Investigating the effect of such assumptions remains as future work.

The second author is grateful to Pierre Simon for suggesting the comparison 
with nonstandard analysis.
Both authors would like to thank Vassilis Kanellopoulos for helpful conversation.
When this paper was essentially complete, we became aware of~\cite{ACG}, which 
is worth mentioning since it employs similar methods in a related context.

\hfil ***
\\
The paper is divided into two parts.
In the first part we prove that the notion of coheir 
leads to short and elegant proofs of well-known results.
Most proofs in this part may be considered folklore, though they have not appeared in the literature so far.
They are included here to provide a self-contained, gentle 
introduction to the techniques that are used in the second 
part. 

As a preliminary illustrative step, we present a proof of 
Ramsey's theorem (Theorem~\ref{thm_Ramsey}).
Then we prove a generalization of Hindman's theorem 
(Theorem~\ref{thm_Hindman}), which is
 required in the second part of the paper.
We also show how to combine Ramsey's and Hindman's theorems in a single
proposition -- the Milliken–Taylor theorem (Theorem~\ref{thm_MillikenTaylor}).
Finally, we prove an abstract algebraic version of the Hales-Jewett theorem
(Theorem~\ref{thm_abstract_HJ}) due to Sabine Koppelberg~\cite{Koppelberg}.

In the second part of the paper we prove two Ramsey-theoretic properties
of semigroups (Lemmas~\ref{lem_Carlson} and~\ref{lem_Gowers}).
As an application, we derive a generalization of Carlson's theorem on colourings of variable
words which we present in the style of Koppelberg (Theorem~\ref{thm_Carlson2})
and in its classical form (Corollary~\ref{corol_Carlson}).
%.
Lemma~\ref{lem_Gowers} is a partition theorem that generalizes Gowers's  FIN$_k$ Theorem~\cite{Gowers} in a different direction than~\cite{Lupini}.

The extent of the generalizations mentioned above is limited, and they could be obtained in other ways, but our motivation here is to show the use and relevance of model theoretic methods.
Numerous papers in the literature strengthen or generalize the partition theorems considered here.
The comparison of the results that appear in these papers is not always straightforward -- a few are compared in \cite{AC}. 

\hfil ***
\\
The proofs in this paper require a modicum of familiarity with model theory.
However, the results can be stated in an elementary language, and in the rest of this introduction we introduce the necessary terminology.

Throughout the paper $G$ is a semigroup and 
$\Sigma$ a non-empty set of endomorphisms of $G$.
For ${\bar a}\in G^{\le\omega}$ we write

\def\ceq#1#2#3{\parbox[t]{8ex}{$\displaystyle #1$}\medrel{#2}{$\displaystyle #3$}}

\ceq{\hfill\emph{$\fp^{\Sigma}\,{\bar a}$} }
{=}
{\Big\{\sigma_{0}\,{a_{i_0}}\kern-0.5ex\cdot\dots\cdot\sigma_{k}\,{a_{i_k}}
\ :\  
i_0<\dots<i_k<|{\bar a}|, \ \  
\bar\sigma\in\big(\Sigma\cup\{\id_G\}\big)^{k+1}, \ 
k<|{\bar a}|\Big\}}

Overlined symbols, such as $\bar a$ or $\bar\sigma$, always denote a 
tuple, and  $a_i$, $\sigma_i$ denotes the $i$-th entry of that tuple.

When $\Sigma$ is empty, we write \emph{$\fp\,{\bar a}$.}

\medskip
\begin{example}\label{ex_words}
   For future reference, we instantiate the definition above in the 
   context of free semigroups.
   Let $G$ be the set of words on a finite alphabet $A\cup\{x\}$, 
   where $x$ is a symbol not in $A$ which we call \emph{variable.}
   Let $C$ be the set of words on the alphabet $A$.
   Words in $C$ are called \emph{constant words}, while 
   those in $G\sm C$ are called \emph{variable words.}
   When $G$ is endowed with the operation of concatenation of words, 
   $C$ and $G\sm C$ are subsemigroups of $G$.
   For $t\in G$ and $a\in A$, let $t(a)$ be the word obtained by replacing 
    all the occurrences of $x$ in $t$ by $a$.
   Note that the map $\sigma_a:t\mapsto t(a)$ is an endomorphism of $G$.
   In the literature, when $G$ is as above and $\Sigma=\{\sigma_a:a\in A\}$,
   the elements of $\fp^{\Sigma}\,{\bar s}$ are called \emph{extracted words.}
   We say that a tuple $\bar a\in\big(\fp^{\Sigma}\bar s\big)^{\omega}$ is an \emph{extracted sequence\/} if $a_i\in\fp^{\Sigma}s_{\restriction\, [n_i,n_{i+1})}$ for some increasing sequence of positive integers $\<n_i:i<\omega\>$.
   If, moreover, $a_i\notin C$ for all $i$, we say that $\bar a$ is an \emph{extracted variable sequence\/} of $\bar s$.\QED
\end{example}

The following definition will be used to express our results in the
general context of semigroups.% (avoiding the use of extracted sequences, partial semigroups, etc.).

\begin{definition}
   Let \emph{$\cpaw$\/} be a binary relation on $G$.
   We say that $G$ is \emph{$\cpaw$-covered\/} if for every finite
   $A\subseteq G$ there is a $c$ such that $A\cpaw{c}$.
   If $c$ can be found in some fixed $B\subseteq G$, we say $\cpaw$-covered \emph{by $B$\/}.
   We say that $G$ is \emph{\cpawdot-closed\/} if ${a}\cpaw{b}\cpaw{c}$ 
   implies ${a}\cpaw{b}{\cdot}{c}$ for all ${a},{b},{c}\in G$.
   A \emph{$\cpaw$-chain\/} in $G$ is a tuple ${\bar a}\in G^{\le\omega}$ 
   such that ${a_i}\cpaw{a_{i+1}}$.
\end{definition}

The preorder relation given by the length of the words on a free semigroup $G$
is a natural example that is both \cpawdot-closed and $\cpaw$-covered.
A less straightforward relation is used in the proof of 
Theorem~\ref{thm_Carlson2}.

Finally, we recall two standard notions.
Let $C\subseteq G$ be a subsemigroup.
We say that $C$ is \emph{nice\/} if $a\cdot b\in C$ implies $a, b\in C$.
A homomorphism $\sigma:G\to C$ such that $\sigma_{\restriction C}=\id_C$ 
is called \emph{retraction\/} of $G$ onto $C$.
Note that the set of constant words in Example~\ref{ex_words} is a nice 
subsemigroup and that the maps $\sigma_a$ are retractions.

We are now ready to state Lemma~\ref{lem_Carlson}.

\textbf{Lemma}\ 
   Let $\Sigma$ be a finite set of retractions of $G$ onto a nice 
   subsemigroup $C$.
   Let $\cpaw$ be a relation on $G$ that makes it \cpawdot-closed 
   and $\cpaw$-covered by $G\sm C$.
   Then, for every finite coloring of $G$, there is a $\cpaw$-chain 
   $\bar a\in (G\sm C)^\omega$ such that $\fp^{\Sigma}\bar a\sm C$
   is monochromatic.
\QED

When $C$ and $\Sigma$ are empty and $\cpaw$ holds for all pairs, 
the lemma reduces to Hindman's theorem (Theorem~\ref{thm_Hindman}).

The appropriate choice of $G$, $C$, $\Sigma$ and $\cpaw$ yields Carlson's 
partition theorem (in particular no model theoretic argument is 
necessary, see Theorem~\ref{thm_Carlson2} and its 
Corollary~\ref{corol_Carlson}).

In the last section we prove Lemma~\ref{lem_Gowers} which is similar to the lemma above but deals with composition of homomorphisms.
This is also stated in an elementary language and a general version of a
partition theorem by Gowers is derived from it.

%%%%%%%%%%%%%%%%%%%%%%%%%%%%%%%%%%%%%%%%%%%%%%%%%%%%%%%%%%%%%%%%%%%%%%%%%%%%
%%%%%%%%%%%%%%%%%%%%%%%%%%%%%%%%%%%%%%%%%%%%%%%%%%%%%%%%%%%%%%%%%%%%%%%%%%%%
%%%%%%%%%%%%%%%%%%%%%%%%%%%%%%%%%%%%%%%%%%%%%%%%%%%%%%%%%%%%%%%%%%%%%%%%%%%%
%%%%%%%%%%%%%%%%%%%%%%%%%%%%%%%%%%%%%%%%%%%%%%%%%%%%%%%%%%%%%%%%%%%%%%%%%%%%
%%%%%%%%%%%%%%%%%%%%%%%%%%%%%%%%%%%%%%%%%%%%%%%%%%%%%%%%%%%%%%%%%%%%%%%%%%%%
%%%%%%%%%%%%%%%%%%%%%%%%%%%%%%%%%%%%%%%%%%%%%%%%%%%%%%%%%%%%%%%%%%%%%%%%%%%%
%%%%%%%%%%%%%%%%%%%%%%%%%%%%%%%%%%%%%%%%%%%%%%%%%%%%%%%%%%%%%%%%%%%%%%%%%%%%
%%%%%%%%%%%%%%%%%%%%%%%%%%%%%%%%%%%%%%%%%%%%%%%%%%%%%%%%%%%%%%%%%%%%%%%%%%%%
\section{Coheirs, and coheir sequences}\label{coheirs}

We assume that the reader is familiar with undergraduate model theory
and in this section we only review the few prerequisites that go beyond that.
Proofs are omitted.
The reader may consult any standard model theory textbook e.g.\@ \cite{TZ} 
(the intrepid reader may consult \cite{DZ}, some lecture notes which 
use the same notation and quirks as this paper).
The notation and terminology are standard with the possible exception of 
Definitions~\ref{def_coheir_idepencence} and~\ref{def_coheir_stationary}.

A \emph{sequence\/} is a function whose domain is a linear order.
A \emph{tuple\/} is a sequence whose domain is an ordinal.
The domain of the tuple ${\mr c}$ is denoted by \emph{$|{\mr c}|$\/} and 
is called the \emph{length\/} of ${\mr c}$.

\begin{notation}\label{notation1}
   Sometimes (i.e.\@ not always) we may overline tuples as mnemonic.
   When a tuple ${\mr\bar c}$ is introduced, ${\mr c_i}$ denotes 
   the $i$-th element of ${\mr\bar c}$.
   We write ${\mr c_{\restriction I}}$, where $I\subseteq |{\mr\bar c}|$, 
   for the tuple which is naturally associated to the restriction of 
   ${\mr\bar c}$ to $I$.
   The bar is dropped for ease of notation.\QED
\end{notation}

We denote the monster model by $\mrU$ or, when dealing with semigroups, 
by $\mrG$.
We always work over a fixed set of parameters $A\subseteq\mrU$.
When this set is a model, as it will often be, we denote it by $M$,
or $G$ in the case of semigroups.

We say that a type $p({\bl x})$ is  \emph{finitely satisfied\/} in $A$
if every conjunction of formulas in $p({\bl x})$ has a solution 
in $A^{|{\bl x}|}$.
A global type that is finitely satisfiable in $A$ is invariant over $A$.

If $M$ is a model every consistent type 
$p({\bl x})\subseteq L(M)$ is finitely satisfied in $M$.
For this reason in a few points in this paper it is 
necessary to work over a model.
For simplicity, we always assume this.

The following is an easy, well-known fact.

\begin{proposition}\label{prop_exisntence_coheirs}
   Every type $q({\bl x})\subseteq L(\mrU)$ that is finitely 
   satisfiable in $M$ has an extension to a global type finitely 
   satisfiable in $M$.\QED
\end{proposition}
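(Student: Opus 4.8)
The plan is to extend $q$ to a maximal type that remains finitely satisfiable in $M$, and then argue that maximality forces it to be complete (global). This is the standard Zorn's lemma / compactness argument for existence of coheirs.

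First I would consider the poset $\mathcal{F}$ of all types $r({\bl x}) \subseteq L(\mrU)$ with $q \subseteq r$ and $r$ finitely satisfiable in $M$, ordered by inclusion. The set $\mathcal{F}$ is non-empty since $q \in \mathcal{F}$. Given a chain in $\mathcal{F}$, its union is again a type extending $q$, and it is finitely satisfiable in $M$ because any finite conjunction of formulas from the union already lies in a single member of the chain (finitely many formulas, linearly ordered, so one member contains them all), hence has a solution in $M^{|{\bl x}|}$. So every chain has an upper bound, and by Zorn's lemma there is a maximal element $p({\bl x}) \in \mathcal{F}$.

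It remains to check that $p$ is a \emph{global} type, i.e.\ for every $L(\mrU)$-formula $\phi({\bl x})$ either $\phi \in p$ or $\neg\phi \in p$. Suppose not: then both $p \cup \{\phi\}$ and $p \cup \{\neg\phi\}$ properly extend $p$, so by maximality neither is finitely satisfiable in $M$. Unpacking, there are finite conjunctions $\psi_0, \psi_1$ of formulas of $p$ such that $\psi_0 \wedge \phi$ has no solution in $M^{|{\bl x}|}$ and $\psi_1 \wedge \neg\phi$ has no solution in $M^{|{\bl x}|}$. But then $\psi_0 \wedge \psi_1$ is a finite conjunction of formulas of $p$ with no solution in $M^{|{\bl x}|}$ at all (any putative solution would satisfy either $\phi$ or $\neg\phi$), contradicting finite satisfiability of $p$. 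Hence $p$ is complete over $\mrU$, as desired.

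The only point requiring the slightest care is the chain condition in the Zorn argument, and it is routine: it uses just that ``finitely satisfiable'' is a property of finite subsets, so it passes to unions of chains automatically. There is no real obstacle here; the proposition is a packaging of the finite-intersection-property argument, and the reason it is worth stating separately is that it isolates the existence of coheirs, which is used repeatedly later.
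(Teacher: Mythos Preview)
Your proof is correct and is the standard Zorn's lemma argument for this fact. The paper does not actually give a proof: the proposition is stated with a \QED{} marker in a section where the authors explicitly say ``Proofs are omitted,'' so there is nothing to compare against beyond noting that your argument is exactly the expected one.
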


If $p({\bl x})$ is finitely satisfied in $M$, the extensions of 
$p({\bl x})$ that are also finitely satisfied in $M$ are called 
\emph{coheirs\/} of $p({\bl x})$.

In many cases it is useful to focus on elements instead of their types.
We introduce the following notation to express that $\tp({\mr a}/M,{\mr b})$ is finitely satisfied in $M$.
(The notion is standard in model-theory, it has no standard notation though.)

\def\ceq#1#2#3{\parbox[t]{15ex}{$\displaystyle #1$}
               \medrel{#2}
               {$\displaystyle #3$}}

\begin{definition}\label{def_coheir_idepencence}
   For every ${\mr a}\in\mrU^{|{\bl x}|}$ and ${\mr b}\in\mrU^{|{\bl z}|}$ 
   we define

   \ceq{\hfill\emph{${\mr a}\cnonfork_M{\mr b}$}}
   {\IFF}
   {\phi({\mr a}\,;{\mr b})
   \textrm{ for all }\phi({\bl x}\,;{\bl z})\in L(M)  
   \textrm{ such that }M^{|{\bl x}|}\subseteq\phi(\mrU^{|{\bl x}|}\,;{\mr b})}

   We call this the \emph{coheir-heir} relation. We define the type

   \ceq{\hfill\emph{${\bl x}\cnonfork_M{\mr b}$}}
   {=}
   {\Big\{\phi({\bl x}\,;{\mr b})
   \ :\ 
   \phi({\bl x}\,;{\mr b})\in L(M)
   \textrm{ and } M^{|{\bl x}|}\subseteq\phi(\mrU^{|{\bl x}|}\,;{\mr b})\Big\}.}

   The tuples ${\mr a}$ realizing this type are those such that ${\mr a}\cnonfork_M{\mr b}$. We will use the symbol \emph{${\mr a}\equiv_A{\bl x}\cnonfork_M{\mr b}$} 
   for the union of the types ${\bl x}\cnonfork_M{\mr b}$ and 
   $\tp({\mr a}/M)$.\QED
\end{definition}

We imagine ${\mr a}\cnonfork_M{\mr b}$ as saying that ${\mr a}$ is 
\emph{independent\/} from ${\mr b}$ over $M$.
This is a very strong form of independence.
In general it is not symmetric, that is, ${\mr a}\cnonfork_M{\mr b}$ is not
the same as ${\mr b}\cnonfork_M{\mr a}$ (symmetry is equivalent to stability).

We shall use, sometimes without reference, the following easy lemma.

\begin{lemma}\label{lem_coheir_independence}
The following properties hold for all small $M,{\mr a},{\mr b}$, and ${\mr c}$
\begin{itemize}
\item[1.] ${\mr a}\cnonfork_M{\mr b}\ \ \IMP\ \ f{\mr a}\cnonfork_Mf{\mr b}$ \ \ 
          for every $f\in\Aut(\mrU/M)$\hfill \textit{invariance}
\item[2.] ${\mr a}\cnonfork_M{\mr b}\ \ \IFF\ \ {\mr a_0}\cnonfork_M{\mr b_0}$
          \ for all finite ${\mr a_0}\subseteq{\mr a}$ and 
          ${\mr b_0}\subseteq{\mr b}$ \hfill\textit{finite character}
\item[3.] ${\mr a}\cnonfork_M{\mr b},{\mr c}$ \ and \ 
          ${\mr b}\cnonfork_M{\mr c}\ \ \IMP\ \ {\mr a},{\mr b}\cnonfork_M{\mr c}$
          \hfill\hfill\hfill\textit{transitivity}
\item[4.] ${\mr a}\cnonfork_M{\mr b}\ \ \IMP\ \ $ 
          there exists ${\mr a'}\equiv_{M,\,{\mr b}}{\mr a}$ such that 
          ${\mr a'}\cnonfork_M{\mr b},{\mr c}$
          \hspace{\stretch{20}}\textit{coheir extension}\QED
\end{itemize}
\end{lemma}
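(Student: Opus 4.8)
The plan is to verify each of the four properties directly from Definition~\ref{def_coheir_idepencence}, unwinding ``${\mr a}\cnonfork_M{\mr b}$'' as ``$\tp({\mr a}/M,{\mr b})$ is finitely satisfied in $M$'' whenever that formulation is more convenient, and keeping in mind that finite satisfiability of a global type is equivalent to finite satisfiability of each of its finite fragments.

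For part~1, \emph{invariance}, I would argue that if $f\in\Aut(\mrU/M)$ then $f$ fixes $M$ pointwise and hence fixes every formula $\phi({\bl x}\,;{\bl z})\in L(M)$; so the condition ``$M^{|{\bl x}|}\subseteq\phi(\mrU^{|{\bl x}|}\,;{\mr b})$'' holds iff ``$M^{|{\bl x}|}\subseteq\phi(\mrU^{|{\bl x}|}\,;f{\mr b})$'' holds, because applying $f$ to a witness (or its absence) in $M$ does nothing. Then $\phi({\mr a}\,;{\mr b})$ iff $\phi(f{\mr a}\,;f{\mr b})$, and the implication follows by reading off the defining clause for $f{\mr a}\cnonfork_M f{\mr b}$. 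For part~2, \emph{finite character}, the forward direction is immediate since any $\phi$ witnessing failure of ${\mr a_0}\cnonfork_M{\mr b_0}$ is also a formula over $M$ in finitely many of the variables, witnessing failure of ${\mr a}\cnonfork_M{\mr b}$; conversely, any single formula $\phi({\bl x}\,;{\bl z})\in L(M)$ mentions only finitely many variables from ${\bl x}$ and ${\bl z}$, so it is ``seen'' by some finite ${\mr a_0}\subseteq{\mr a}$, ${\mr b_0}\subseteq{\mr b}$, and the hypothesis ${\mr a_0}\cnonfork_M{\mr b_0}$ already delivers $\phi({\mr a}\,;{\mr b})$.

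For part~3, \emph{transitivity}, the cleanest route is via finite satisfiability. Suppose $\tp({\mr a}/M,{\mr b},{\mr c})$ and $\tp({\mr b}/M,{\mr c})$ are both finitely satisfied in $M$; I want $\tp({\mr a},{\mr b}/M,{\mr c})$ finitely satisfied in $M$. Take a formula $\psi({\bl x},{\bl y}\,;{\mr c})\in L(M,{\mr c})$ true of $({\mr a},{\mr b})$. Since $\tp({\mr a}/M,{\mr b},{\mr c})$ is finitely satisfied in $M$, there is ${\mr a'}\in M^{|{\bl x}|}$ with $\psi({\mr a'},{\mr b}\,;{\mr c})$; now $\exists{\bl x}\,\psi({\bl x},{\bl y}\,;{\mr c})$ is a formula over $M,{\mr c}$ true of ${\mr b}$, so by finite satisfiability of $\tp({\mr b}/M,{\mr c})$ there is ${\mr b'}\in M^{|{\bl y}|}$ with $\exists{\bl x}\,\psi({\bl x},{\mr b'}\,;{\mr c})$; fixing a witness ${\mr a''}\in M^{|{\bl x}|}$ gives $({\mr a''},{\mr b'})\in M^{|{\bl x}|+|{\bl y}|}$ satisfying $\psi$. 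One must be slightly careful here: the witness ${\mr a''}$ must be quantified \emph{inside} $M$, which is legitimate precisely because $M$ is a model (an elementary substructure of $\mrU$), so $\exists{\bl x}\,\psi({\bl x},{\mr b'}\,;{\mr c})$ holding in $\mrU$ implies it holds in $M$. This use of ``$M$ is a model'' is the one genuinely non-formal point, though it is entirely standard. Finally part~4, \emph{coheir extension}, follows from Proposition~\ref{prop_exisntence_coheirs}: from ${\mr a}\cnonfork_M{\mr b}$ we get that $p := \tp({\mr a}/M,{\mr b})$ is finitely satisfied in $M$; extend $p$ to a global type $q({\bl x})\subseteq L(\mrU)$ finitely satisfiable in $M$, restrict $q$ to $L(M,{\mr b},{\mr c})$, realize it by some ${\mr a'}$ in $\mrU$ (by saturation, since $M,{\mr b},{\mr c}$ is small), and observe that ${\mr a'}$ realizes $p$ hence ${\mr a'}\equiv_{M,{\mr b}}{\mr a}$, while $\tp({\mr a'}/M,{\mr b},{\mr c})\subseteq q$ is finitely satisfied in $M$, i.e.\ ${\mr a'}\cnonfork_M{\mr b},{\mr c}$.

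The main obstacle is essentially bookkeeping: keeping the roles of the ``object'' variables ${\bl x}$ and the ``parameter'' tuples ${\mr b},{\mr c}$ straight in part~3, and making sure the existential-quantifier witness in the transitivity argument is extracted inside $M$ rather than in $\mrU$. Everything else is a routine unwinding of definitions together with a single appeal to Proposition~\ref{prop_exisntence_coheirs} and to saturation of the monster.
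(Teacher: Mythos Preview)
The paper does not prove this lemma: Section~\ref{coheirs} explicitly says ``Proofs are omitted'' and the statement is closed with a \QED\ box. So there is no paper-proof to compare against; your argument stands or falls on its own.

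Parts 1, 2 and 4 are fine. In part~3 there is a genuine slip. After obtaining $b'\in M^{|{\bl y}|}$ with $\exists{\bl x}\,\psi({\bl x},b';{\mr c})$, you want a witness $a''\in M^{|{\bl x}|}$, and you justify this by ``$M$ is a model (an elementary substructure of $\mrU$)''. But elementarity only transfers formulas with parameters in $M$, and here the parameter ${\mr c}$ need not lie in $M$. So the sentence $\exists{\bl x}\,\psi({\bl x},b';{\mr c})$ is not one to which $M\preceq\mrU$ applies, and you have no reason to find $a''$ inside $M$.

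The fix is to use the element $a'\in M^{|{\bl x}|}$ you already produced in step~2 rather than discarding it for an existential quantifier. From $\psi(a',{\mr b};{\mr c})$ with $a'\in M$, the formula $\psi(a',{\bl y};{\mr c})$ lies in $L(M,{\mr c})$ and is satisfied by ${\mr b}$; finite satisfiability of $\tp({\mr b}/M,{\mr c})$ then gives $b'\in M^{|{\bl y}|}$ with $\psi(a',b';{\mr c})$, and $(a',b')$ is the required tuple in $M$. No appeal to elementarity is needed at all. (Indeed, your step~2 is otherwise vestigial in your write-up --- you compute $a'$ and then never use it.)
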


Note that ${\mr a}\equiv_M{\bl x}\cnonfork_M{\mr b}$ is the intersection of all types in $S(M, {\mr b})$ that are coheirs 
of $\tp({\mr a}/M)$.
As there may be more than one of such coheirs, ${\mr a}\equiv_M{\bl x}\cnonfork_M{\mr b}$ need not be a complete over $M,{\mr b}$.
In fact, completeness is a rather strong property.

\begin{definition}\label{def_coheir_stationary}
   If ${\mr a}\equiv_M{\bl x}\cnonfork_M{\mr b}$ is a complete type 
   (over $M,{\mr b}$) for every ${\mr a}\in\mrU^{<\omega}$, every 
   ${\mr b}\in\mrU^{|{\bl x}|}$, and every tuple of variables ${\bl x}$,
   then we say that $\cnonfork_M$ is \emph{stationary.}
   We say \emph{$n$-stationary\/} if the requirement above is 
   restricted to $|{\bl x}|=n$.\QED
\end{definition}

Stationarity is often ensured by the following property.

\begin{proposition}
   Fix a tuple of variable ${\bl x}$ of length $n$.
   If for every $\phi({\bl x})\in L(\mrU)$ there is a formula 
   $\psi({\bl x})\in L(M)$ such that $\phi(M^{|{\bl x}|})=\psi(M^{|{\bl x}|})$
   then $\cnonfork_M$ is $n$-stationary.\QED
\end{proposition}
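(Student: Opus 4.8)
The plan is to unwind the definition of $n$-stationarity: I must show that for every ${\mr a}\in\mrU^{<\omega}$ and every ${\mr b}\in\mrU^{|{\bl x}|}$, the type ${\mr a}\equiv_M{\bl x}\cnonfork_M{\mr b}$ is complete over $M,{\mr b}$ (the restriction to $|{\bl x}|=n$ being automatic since the tuple ${\bl x}$ is fixed of length $n$). Completeness means: for every formula $\phi({\bl x})\in L(\mrU)$ — which I may as well write as $\phi({\bl x}\,;{\mr b})$ since the only relevant parameters beyond $M$ are in ${\mr b}$ — the type decides $\phi$, i.e.\ either $\phi({\bl x}\,;{\mr b})$ or $\neg\phi({\bl x}\,;{\mr b})$ is entailed by ${\mr a}\equiv_M{\bl x}\cnonfork_M{\mr b}$ together with $\tp({\mr a}/M)$. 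By the definition of the coheir-heir type, a formula $\psi({\bl x}\,;{\mr b})\in L(M,{\mr b})$ belongs to ${\bl x}\cnonfork_M{\mr b}$ precisely when $M^{|{\bl x}|}\subseteq\psi(\mrU^{|{\bl x}|}\,;{\mr b})$, i.e.\ when $\psi$ is satisfied by every tuple from $M$.

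First I would invoke the hypothesis: given $\phi({\bl x}\,;{\mr b})\in L(\mrU)$, there is $\psi({\bl x})\in L(M)$ with $\phi(M^{|{\bl x}|}\,;{\mr b})=\psi(M^{|{\bl x}|})$. Here $\phi(M^{|{\bl x}|}\,;{\mr b})$ denotes the set of tuples from $M$ satisfying $\phi(\,\cdot\,;{\mr b})$ in $\mrU$. Now argue by cases on whether $\tp({\mr a}/M)\vdash\psi({\bl x})$ or $\tp({\mr a}/M)\vdash\neg\psi({\bl x})$ — one of the two holds since $\tp({\mr a}/M)$ is complete over $M$. Suppose $\tp({\mr a}/M)\vdash\psi({\bl x})$. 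Consider the $L(M,{\mr b})$-formula $\chi({\bl x}\,;{\mr b}):=\psi({\bl x})\to\phi({\bl x}\,;{\mr b})$. For any ${\mr m}\in M^{|{\bl x}|}$: if $\mrU\models\psi({\mr m})$ then ${\mr m}\in\psi(M^{|{\bl x}|})=\phi(M^{|{\bl x}|}\,;{\mr b})$, so $\mrU\models\phi({\mr m}\,;{\mr b})$; hence $\mrU\models\chi({\mr m}\,;{\mr b})$ in all cases. Thus $M^{|{\bl x}|}\subseteq\chi(\mrU^{|{\bl x}|}\,;{\mr b})$, so $\chi({\bl x}\,;{\mr b})\in{\bl x}\cnonfork_M{\mr b}$. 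Combining $\chi$ with $\psi\in\tp({\mr a}/M)$ gives $\phi({\bl x}\,;{\mr b})$ in the union, so the type decides $\phi$ positively. The other case is symmetric, using the formula $\neg\psi({\bl x})\to\neg\phi({\bl x}\,;{\mr b})$ and the equality $\phi(M^{|{\bl x}|}\,;{\mr b})=\psi(M^{|{\bl x}|})$ to see that every ${\mr m}\in M^{|{\bl x}|}$ with $\mrU\models\neg\psi({\mr m})$ satisfies $\mrU\models\neg\phi({\mr m}\,;{\mr b})$.

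I would then remark that consistency of ${\mr a}\equiv_M{\bl x}\cnonfork_M{\mr b}$ is not an issue: it is realized by any coheir of $\tp({\mr a}/M)$ in $S(M,{\mr b})$, which exists by Proposition~\ref{prop_exisntence_coheirs} (using that $\tp({\mr a}/M,{\mr b})$ restricted to $L(M,{\mr b})$-instances of $L(M)$-formulas is finitely satisfiable in $M$), so what we have shown is that this consistent type is in fact complete. I do not expect a serious obstacle here; the only point requiring a little care is the bookkeeping of free and parameter variables — making sure that an arbitrary $\phi({\bl x})\in L(\mrU)$ is genuinely of the form $\phi({\bl x}\,;{\mr b})$ for the relevant ${\mr b}$, which is harmless since we are free to absorb any finite tuple of $\mrU$-parameters into ${\mr b}$ and the hypothesis is stated uniformly for all $\phi({\bl x})\in L(\mrU)$.
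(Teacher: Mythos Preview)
Your proof is correct. The paper itself omits the argument entirely (the statement is closed with a bare $\QED$), treating it as routine; your write-up supplies precisely the expected verification. The key step---using the hypothesis to replace $\phi({\bl x};{\mr b})$ by an $L(M)$-formula $\psi({\bl x})$ with the same trace on $M^{|{\bl x}|}$, then observing that $\psi({\bl x})\to\phi({\bl x};{\mr b})$ and $\neg\psi({\bl x})\to\neg\phi({\bl x};{\mr b})$ both lie in ${\bl x}\cnonfork_M{\mr b}$, so that $\tp({\mr a}/M)$ decides $\phi$---is exactly the intended one-line argument. One small remark on your final paragraph: you need not worry about ``absorbing extra $\mrU$-parameters into ${\mr b}$,'' since completeness is only required over $M,{\mr b}$ with ${\mr b}$ fixed, so the formulas to be decided are already of the form $\phi({\bl x};{\mr b})$ with $\phi({\bl x};{\bl z})\in L(M)$.
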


\begin{remark}\label{rk_coheir_stationary}
   Stationarity of $\cnonfork_M$ over every model $M$ 
   is equivalent to the stability of $T$.
   However, in unstable theories the assumption may hold 
   for some particular model.
   For example, if every subset of $M^n$ is the trace of a definable set,
   then $\cnonfork_M$ is $n$-stationary by the proposition above.
   This simple observation will be of help in the proof of 
   Theorem~\ref{thm_Hindman}.
   For natural example let $T=T_{\rm dlo}$ and let $M\subseteq\mrU$
   have the order-type of $\RR$.
   By quantifier elimination every definable of $\mrU$ is union of finitely
   many intervals.
   By Dedekind completeness, the trace on $A$ of any interval of $\mrU$ coincides 
   with that of an $M$-definable interval.\QED
\end{remark}

Let \mbox{$p({\bl x})\in S(\mrU)$} be a global type that is finitely 
satisfiable in $M$.
We say that the tuple ${\mr\bar c}$ is a \emph{coheir sequence\/} of
$p({\bl x})$ over $M$ if for every $i<|{\mr\bar c}|$

\ceq{\hfill {\mr c_i}}
   {\models}
   {p_{\restriction M,\,{\mr c_{\restriction i}}}({\bl x})}.

The following is a convenient characterization of coheir sequences.

\begin{lemma}\label{lem_coheir_property}
   For ${\mr\bar c}$ a tuple of length $\omega$, the following are equivalent 
   \begin{itemize}
   \item[1.] ${\mr\bar c}$ is a coheir sequence over $M$;
   \item[2.] ${\mr c_n}\cnonfork_M{\mr c_{\restriction n}}$ and 
             ${\mr c_{n+1}}\equiv_{M,\,{\mr c_{\restriction n}}}{\mr c_n}$ 
             for every $n<\omega$.\QED
   \end{itemize}
\end{lemma}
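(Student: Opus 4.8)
The plan is to prove the two implications separately, unwinding the definitions in each direction. For $1\Rightarrow 2$: suppose ${\mr\bar c}$ is a coheir sequence over $M$, so ${\mr c_i}\models p_{\restriction M,\,{\mr c_{\restriction i}}}$ for every $i$. First I would argue ${\mr c_n}\cnonfork_M{\mr c_{\restriction n}}$. By Definition~\ref{def_coheir_idepencence}, this means $\phi({\mr c_n}\,;{\mr c_{\restriction n}})$ holds whenever $\phi({\bl x}\,;{\bl z})\in L(M)$ satisfies $M^{|{\bl x}|}\subseteq\phi(\mrU^{|{\bl x}|}\,;{\mr c_{\restriction n}})$. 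Given such a $\phi$, the condition $M^{|{\bl x}|}\subseteq\phi(\mrU\,;{\mr c_{\restriction n}})$ says exactly that $\phi({\bl x}\,;{\mr c_{\restriction n}})$ cannot have its negation finitely satisfied in $M$ (any instance of $\neg\phi$ with parameters from $M$ would be a point of $M^{|{\bl x}|}$ outside $\phi$), hence $\phi({\bl x}\,;{\mr c_{\restriction n}})$, being consistent with the coheir $p_{\restriction M,\,{\mr c_{\restriction n}}}$ — actually I should be careful: the point is that since $p$ is finitely satisfiable in $M$ and $\neg\phi({\bl x}\,;{\mr c_{\restriction n}})$ is not finitely satisfiable in $M$, we cannot have $\neg\phi\in p_{\restriction M,\,{\mr c_{\restriction n}}}$, so $\phi({\bl x}\,;{\mr c_{\restriction n}})\in p_{\restriction M,\,{\mr c_{\restriction n}}}$, and therefore ${\mr c_n}\models\phi({\bl x}\,;{\mr c_{\restriction n}})$. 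This gives ${\mr c_n}\cnonfork_M{\mr c_{\restriction n}}$. For the second conjunct, ${\mr c_{n+1}}\equiv_{M,\,{\mr c_{\restriction n}}}{\mr c_n}$: both ${\mr c_n}$ and ${\mr c_{n+1}}$ realize the restriction of $p$ to $M\cup{\mr c_{\restriction n}}$ — for ${\mr c_n}$ this is immediate, and for ${\mr c_{n+1}}$ it follows because $p_{\restriction M,\,{\mr c_{\restriction{n+1}}}}$ extends $p_{\restriction M,\,{\mr c_{\restriction n}}}$ — and since $p_{\restriction M,\,{\mr c_{\restriction n}}}$ is a complete type over $M\cup{\mr c_{\restriction n}}$, the two tuples have the same type there.

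For $2\Rightarrow 1$: I proceed by induction on $n$ to show ${\mr c_n}\models p_{\restriction M,\,{\mr c_{\restriction n}}}$. Suppose $\phi({\bl x}\,;{\mr c_{\restriction n}})\in p$ with $\phi({\bl x}\,;{\bl z})\in L(M)$. Since $p$ is finitely satisfiable in $M$, the formula $\exists{\bl x}\,\phi({\bl x}\,;{\bl z})$ is satisfiable in $M$; more to the point, $\phi({\bl x}\,;{\mr c_{\restriction n}})\in p$ together with $p$ finitely satisfiable in $M$ forces — via an automorphism/type-transfer argument — that $M^{|{\bl x}|}\subseteq\phi(\mrU\,;{\mr c_{\restriction n}})$: if some $m\in M^{|{\bl x}|}$ had $\neg\phi(m\,;{\mr c_{\restriction n}})$, then $\neg\phi({\bl x}\,;{\mr c_{\restriction n}})$ would be satisfiable over $M\cup{\mr c_{\restriction n}}$, and moving to a copy of ${\mr c_{\restriction n}}$ realizing its type over $M$ and using finite satisfiability of $p$ in $M$ one derives a contradiction with $\phi\in p$. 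Hence $\phi({\bl x}\,;{\mr c_{\restriction n}})$ belongs to the type ${\bl x}\cnonfork_M{\mr c_{\restriction n}}$, and since ${\mr c_n}\cnonfork_M{\mr c_{\restriction n}}$ by hypothesis, ${\mr c_n}\models\phi({\bl x}\,;{\mr c_{\restriction n}})$. It remains to check that ${\mr c_n}$ and the "canonical" realization of $p_{\restriction M,\,{\mr c_{\restriction n}}}$ agree — but as above, completeness of $p_{\restriction M,\,{\mr c_{\restriction n}}}$ over $M\cup{\mr c_{\restriction n}}$, combined with the fact just shown that ${\mr c_n}$ satisfies every formula of $p$ over $M\cup{\mr c_{\restriction n}}$, shows ${\mr c_n}\models p_{\restriction M,\,{\mr c_{\restriction n}}}$, as wanted.

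The step I expect to be the main obstacle is the careful handling of the interaction between "finitely satisfiable in $M$" for the global type $p$ and the trace condition $M^{|{\bl x}|}\subseteq\phi(\mrU^{|{\bl x}|}\,;{\mr b})$ that defines $\cnonfork_M$: one must phrase precisely why a formula $\phi({\bl x}\,;{\mr c_{\restriction n}})$ lies in $p$ \emph{if and only if} its companion condition over $M$ holds, and this uses in an essential way that $p$ is a \emph{global} type finitely satisfiable in $M$ (so its restriction to parameters algebraic over/in $M\cup{\mr c_{\restriction n}}$ is pinned down) together with the homogeneity of $\mrU$. The second conjunct in clause 2, ${\mr c_{n+1}}\equiv_{M,\,{\mr c_{\restriction n}}}{\mr c_n}$, is needed precisely to guarantee that the construction stays "on the same branch", i.e.\ that at stage $n+1$ the parameter tuple ${\mr c_{\restriction{n+1}}}$ realizes over $M$ the type dictated by $p$ restricted to $M\cup{\mr c_{\restriction n}}$; without it, clause 1 could fail even though each ${\mr c_i}$ individually looks like a coheir realization. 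I would also remark that the equivalence extends routinely to tuples of arbitrary (not just $\omega$) length by finite character (Lemma~\ref{lem_coheir_independence}.2), but the statement as given only needs length $\omega$.
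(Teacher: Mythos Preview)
The paper states this lemma without proof (note the \QED\ in the statement), so there is no proof in the paper to compare against. I therefore evaluate your argument on its own merits.

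Your direction $1\Rightarrow 2$ is fine. The direction $2\Rightarrow 1$, however, has a genuine gap. You begin ``I proceed by induction on $n$ to show ${\mr c_n}\models p_{\restriction M,\,{\mr c_{\restriction n}}}$'' --- but there is no $p$ yet. Condition~2 does not mention any global type; the content of $2\Rightarrow 1$ is precisely that some global $M$-coheir $p$ \textit{exists} with the required property, and you must construct it. Worse, the argument you give for the unspecified $p$ contains a false step: you claim that $\phi({\bl x};{\mr c_{\restriction n}})\in p$ together with finite satisfiability of $p$ in $M$ forces $M^{|{\bl x}|}\subseteq\phi(\mrU;{\mr c_{\restriction n}})$. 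This is not true. Finite satisfiability only says that every formula in $p$ has \textit{some} realization in $M$, not that \textit{all} of $M$ realizes it. (Concretely: in DLO with $M=\QQ$, the type at $0^+$ is a global $M$-coheir containing ${\bl x}>0$, yet $\QQ\not\subseteq\{x:x>0\}$.) So the type ${\bl x}\cnonfork_M{\mr c_{\restriction n}}$ is in general strictly weaker than $p_{\restriction M,{\mr c_{\restriction n}}}$, and knowing only ${\mr c_n}\cnonfork_M{\mr c_{\restriction n}}$ cannot by itself pin down which coheir branch ${\mr c_n}$ lies on.

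The missing idea is this: the second conjunct in~2 ensures that the complete types $q_n=\tp({\mr c_n}/M,{\mr c_{\restriction n}})$ form an increasing chain (since $q_{n+1}{\restriction}\,M,{\mr c_{\restriction n}}=\tp({\mr c_{n+1}}/M,{\mr c_{\restriction n}})=\tp({\mr c_n}/M,{\mr c_{\restriction n}})=q_n$). Each $q_n$ is finitely satisfiable in $M$ by the first conjunct, hence so is $q=\bigcup_n q_n$. Now apply Proposition~\ref{prop_exisntence_coheirs} to extend $q$ to a global $M$-coheir $p$; then $p_{\restriction M,{\mr c_{\restriction n}}}=q_n$ and ${\mr c_n}\models q_n$ for every $n$, which is exactly condition~1. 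You correctly identified in your final paragraph that the second conjunct is what keeps the sequence ``on the same branch'', but your actual argument never uses it; this construction is where it enters.
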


Let $I,<_I$ be a linear order.
We call a function ${\mr\bar a}:I\to\mrU^{|{\bl x}|}$ an \emph{$I$-sequence}, 
or simply a \emph{sequence\/} when $I$ is clear.

If $I_0\subseteq I$ we call ${\mr a_{\restriction I_0}}$, the restriction of 
${\mr\bar a}$ to $I_0$, a \emph{subsequence\/} of ${\mr\bar a}$.
When $I_0$ is finite we identify ${\mr a_{\restriction I_0}}$ 
with a tuple of length $|I_0|$.
 
\begin{definition}
   Let $I,<_I$ be an infinite linear order and let ${\mr\bar a}$ be an 
   $I$-sequence.
   We say that $a$ is a \emph{sequence of indiscernibles\/} over $A$ or, 
   a sequence of \emph{$A$-indiscernibles}, if 
   ${\mr a_{\restriction I_0}}\equiv_A {\mr a_{\restriction I_1}}$ 
   for every $I_0,I_1\subseteq I$ of equal finite cardinality.\QED
\end{definition}

The following can be easily derived from the lemma above by induction.

\begin{proposition}
   Every sequence of coheirs over $M$ is $M$-indiscernible.\QED
\end{proposition}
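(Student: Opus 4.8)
The plan is to prove, by induction on $n<\omega$, that the type over $M$ of an increasing finite subtuple of the coheir sequence depends only on its length, which is exactly $M$-indiscernibility. Fix the global type $p({\bl x})\in S(\mrU)$ finitely satisfiable in $M$ — hence $M$-invariant — and a coheir sequence $\bar c$ of $p$ over $M$, of length $\alpha$. I will establish $H(n)$: for every increasing $i_0<\dots<i_{n-1}$ in $\alpha$ one has $c_{i_0},\dots,c_{i_{n-1}}\equiv_M c_0,\dots,c_{n-1}$. Once this is known, two arbitrary increasing $n$-tuples of indices give subtuples that are each $\equiv_M$ to $c_0,\dots,c_{n-1}$, hence $\equiv_M$ to one another, so $\bar c$ is $M$-indiscernible. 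The base cases $n\le1$ are immediate: by definition $c_i\models p_{\restriction M,\,c_{\restriction i}}$, a type extending $p_{\restriction M}$, and $p_{\restriction M}$ is complete over $M$ because $p$ is complete over $\mrU\supseteq M$; hence $\tp(c_i/M)=p_{\restriction M}$ for all $i$.

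For the inductive step, assuming $H(n)$, I would fix $i_0<\dots<i_{n-1}<i_n$ in $\alpha$ and, using $H(n)$ and homogeneity of $\mrU$, choose $f\in\Aut(\mrU/M)$ with $f(c_{i_k})=c_k$ for $k<n$. Writing $d:=f(c_{i_n})$ we get $c_{i_0},\dots,c_{i_n}\equiv_M c_0,\dots,c_{n-1},d$, so it remains to show $d\equiv_{M,\,c_0\dots c_{n-1}}c_n$. For $\phi({\bl x}\,;{\bl y_0},\dots,{\bl y_{n-1}})\in L(M)$ the following should be a chain of equivalences: $\models\phi(d\,;c_0,\dots,c_{n-1})$ iff $\models\phi(c_{i_n}\,;c_{i_0},\dots,c_{i_{n-1}})$ (apply $f^{-1}\in\Aut(\mrU/M)$); iff $\phi({\bl x}\,;c_{i_0},\dots,c_{i_{n-1}})\in p$ (because $c_{i_0},\dots,c_{i_{n-1}}$ sit among the entries of $c_{\restriction i_n}$ and $c_{i_n}\models p_{\restriction M,\,c_{\restriction i_n}}$, with $p$ complete); iff $\phi({\bl x}\,;c_0,\dots,c_{n-1})\in p$ (by $M$-invariance of $p$ together with $H(n)$, which gives $c_{i_0},\dots,c_{i_{n-1}}\equiv_M c_0,\dots,c_{n-1}$); iff $\models\phi(c_n\,;c_0,\dots,c_{n-1})$ (because $c_0,\dots,c_{n-1}=c_{\restriction n}$ and $c_n\models p_{\restriction M,\,c_{\restriction n}}$). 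This yields $H(n+1)$, completing the induction.

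The substance of the argument is packed into these equivalences, and the step I expect to need the most care is the manipulation of the $M$-invariant global type $p$ along automorphisms fixing $M$ — that $f^{-1}(p)=p$, that membership $\phi({\bl x}\,;\bar e)\in p$ depends only on $\tp(\bar e/M)$, and that the defining clause $c_j\models p_{\restriction M,\,c_{\restriction j}}$ of a coheir sequence restricts correctly to subtuples of $c_{\restriction j}$. None of this is deep; it is exactly what the definitions recalled in this section provide. Alternatively, the same induction can be carried out from the characterization in Lemma~\ref{lem_coheir_property} together with the properties in Lemma~\ref{lem_coheir_independence} — invariance, finite character and coheir extension — which is presumably the route hinted at by the phrase preceding the statement; the version above simply unwinds those lemmas.
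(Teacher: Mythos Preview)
Your argument is correct and is exactly the induction the paper alludes to; the paper itself omits the proof, merely noting before the statement that it ``can be easily derived from the lemma above by induction.'' The direct route via $M$-invariance of the global type $p$ that you carry out and the Lemma~\ref{lem_coheir_property} route you mention at the end are interchangeable unpackings of the same idea, so there is nothing further to compare.
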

%%%%%%%%%%%%%%%%%%%%%%%%%%%%%%%%%%%%%%%%%%%%%%%%%%%%%%%%%%%%%%%%%%%%%%%%%%%%
%%%%%%%%%%%%%%%%%%%%%%%%%%%%%%%%%%%%%%%%%%%%%%%%%%%%%%%%%%%%%%%%%%%%%%%%%%%%
%%%%%%%%%%%%%%%%%%%%%%%%%%%%%%%%%%%%%%%%%%%%%%%%%%%%%%%%%%%%%%%%%%%%%%%%%%%%
%%%%%%%%%%%%%%%%%%%%%%%%%%%%%%%%%%%%%%%%%%%%%%%%%%%%%%%%%%%%%%%%%%%%%%%%%%%%
%%%%%%%%%%%%%%%%%%%%%%%%%%%%%%%%%%%%%%%%%%%%%%%%%%%%%%%%%%%%%%%%%%%%%%%%%%%%
%%%%%%%%%%%%%%%%%%%%%%%%%%%%%%%%%%%%%%%%%%%%%%%%%%%%%%%%%%%%%%%%%%%%%%%%%%%%
%%%%%%%%%%%%%%%%%%%%%%%%%%%%%%%%%%%%%%%%%%%%%%%%%%%%%%%%%%%%%%%%%%%%%%%%%%%%
%%%%%%%%%%%%%%%%%%%%%%%%%%%%%%%%%%%%%%%%%%%%%%%%%%%%%%%%%%%%%%%%%%%%%%%%%%%%
\section{Ramsey's theorem from coheir sequences}\label{Ramsey}

We illustrate the relation between coheirs and Ramsey phenomena
in the simplest possible case: Ramsey's theorem.
The subsequent sections build on this proof for more sophisticated results.

In this chapter we deal with finite partitions.
The partition of a set $X$ into $k$ subsets is often represented by 
a map $f:X\to [k]$.
The elements of $[k]=\{1,\dots,k\}$ are also 
called \emph{colors}, and the partition a \emph{coloring},
or \emph{$k$-coloring}, of $X$.
We say that $Y\subseteq X$ is \emph{monochromatic\/} if 
$f$ is constant on $Y$.

Let $M$ be an arbitrary infinite set.
Fix $n,k<\omega$ and fix a coloring $f$ of the set of all 
\emph{$n$-subsets\/} of $M$, aleas the 
\emph{complete $n$-uniform hypergraph\/} with vertex set $M$,

\ceq{\hfill f}
   {:}
   { {M\choose n}\ \ \to\ \ [k]}.

We say that $H\subseteq M$ is a \emph{monochromatic subgraph\/} if 
the subgraph induced by $H$ is monochromatic.
In the literature monochromatic subgraphs are also called 
\emph{homogeneous sets.}

The following is a very famous theorem which we prove here 
in an unusual way.
The proof will serve as a blueprint for other constructions 
in this paper.

\theoremstyle{mio}
\newtheorem{Ramsey}[thm]{Ramsey's Theorem}
\begin{Ramsey}\label{thm_Ramsey}
   Let $M$ be an infinite set.
   Then for every positive integer $n$ and every finite coloring of 
   the complete $n$-uniform hypergraph with vertex set $M$
   there is an infinite monochromatic subgraph.
\end{Ramsey}

\begin{proof}
   Let $L$ be a language that contains $k$ relation symbols $r_1,\dots,r_k$
   of arity $n$.
   Given a $k$-coloring $f$ we define a structure with domain $M$.
   The interpretation of the relation symbols is

   \ceq{\hfill r_i^M}
      {=}
      {\Big\{  a_1,\dots,a_n\in M 
      \ : \ 
      f\big(\{a_1,\dots,a_n\}\big)= i\Big\}.} 

   We may assume that $M$ is an elementary substructure of some 
   large saturated model $\mrU$.
   Pick any type $p({\bl x})\in S(\mrU)$ finitely satisfied in $M$ 
   but not realized in $M$ and let ${\mr\bar c}=\<{\mr c_i}:i<\omega\>$ 
   be a coheir sequence of $p({\bl x})$.

   There is a first-order sentence saying that the formulas 
   $r_i({\bl x_1},\dots,{\bl x_n})$ are a coloring of ${M\choose n}$.
   Then by elementarity the same holds in $\mrU$.
   By indiscernibility, all tuples of $n$ distinct elements of 
   ${\mr\bar c}$ have the same color, say $1$.
   We now prove that there is a sequence $\bar a=\<a_i:i<\omega\>$ 
   in $M$ with the same property.
   
   We construct $a_{\restriction i}$ by induction on $i$ as follows.

   Assume as induction hypothesis that the subsequences of length $n$
   of $a_{\restriction i},{\mr c_{\restriction n}}$ all have color $1$.
   Our goal is to find $a_i\in M$ such that the same property holds 
   for $a_{\restriction i},a_i,{\mr c_{\restriction n}}$.
   By the indiscernibility of ${\mr\bar c}$, the property holds for 
   $a_{\restriction i},{\mr c_{\restriction n}},{\mr{c_n}}$.
   And this can be written by a formula 
   $\phi(a_{\restriction i},{\mr c_{\restriction n}},{\mr{c_n}})$.
   As ${\mr\bar c}$ is a coheir sequence, by Lemma~\ref{lem_coheir_property}
   we can find  $a_i\in M$ such that 
   $\phi(a_{\restriction i},{\mr c_{\restriction n}},a_i)$.
   So, as the order is irrelevant, 
   $a_{\restriction i},a_i,{\mr c_{\restriction n}}$ 
   satisfies the induction hypothesis.
\end{proof}
%%%%%%%%%%%%%%%%%%%%%%%%%%%%%%%%%%%%%%%%%%%%%%%%%%%%%%%%%%%%%%%%%%%%%%%%%%%%
%%%%%%%%%%%%%%%%%%%%%%%%%%%%%%%%%%%%%%%%%%%%%%%%%%%%%%%%%%%%%%%%%%%%%%%%%%%%
%%%%%%%%%%%%%%%%%%%%%%%%%%%%%%%%%%%%%%%%%%%%%%%%%%%%%%%%%%%%%%%%%%%%%%%%%%%%
%%%%%%%%%%%%%%%%%%%%%%%%%%%%%%%%%%%%%%%%%%%%%%%%%%%%%%%%%%%%%%%%%%%%%%%%%%%%
%%%%%%%%%%%%%%%%%%%%%%%%%%%%%%%%%%%%%%%%%%%%%%%%%%%%%%%%%%%%%%%%%%%%%%%%%%%%
%%%%%%%%%%%%%%%%%%%%%%%%%%%%%%%%%%%%%%%%%%%%%%%%%%%%%%%%%%%%%%%%%%%%%%%%%%%%
%%%%%%%%%%%%%%%%%%%%%%%%%%%%%%%%%%%%%%%%%%%%%%%%%%%%%%%%%%%%%%%%%%%%%%%%%%%%
%%%%%%%%%%%%%%%%%%%%%%%%%%%%%%%%%%%%%%%%%%%%%%%%%%%%%%%%%%%%%%%%%%%%%%%%%%%%
\section{Idempotent orbits in semigroups}\label{semigroups}

\def\medrel#1{\parbox[t]{6ex}{$\displaystyle\hfil #1$}}
\def\ceq#1#2#3{\parbox[t]{22ex}{$\displaystyle #1$}
              \medrel{#2}
              {$\displaystyle #3$}}

In this and the following sections we fix a semigroup $G$
which we identify with a first-order structure.
The language contains, among others, the symbol \emph{$\ \cdot\ $} 
which is interpreted as a binary associative operation on $G$.
We write \emph{$\mrG$} for a large saturated elementary extension of $G$.

For any two sets $\mrA,\mrB\subseteq\mrG$ we define

\ceq{\hfill\emph{$\mrA\cdot_G\mrB$}}
   {=}
   {\Big\{ {\mr a}{\cdot}{\mr b}
      \ :\ 
      {\mr a}\in\mrA, \ 
      {\mr b}\in\mrB\textrm{ and }\ 
      {\mr a}\cnonfork_G{\mr b}\Big\}}

In this and the next section we abbreviate $\O({\mr a}/G)$, 
the orbit of ${\mr a}$ under $\Aut(\mrG/G)$, 
with \emph{${\mr a}_G$}.
We write \emph{${\mr a}\cdot_G\mrB$} for $\O({\mr a}/G)\cdot_G\mrB$.
Similarly for \emph{$\mrA\cdot_G{\mr b}$} 
and \emph{${\mr a}\cdot_G{\mr b}$.}

\begin{lemma}\label{lem_typedef_Ab}
   If $\mrA$ is type definable over $G$ then so is $\mrA\cdot_G{\mr b}$ 
   for any ${\mr b}$.
\end{lemma}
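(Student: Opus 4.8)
The goal is to show that if $\mrA$ is type-definable over $G$, then $\mrA \cdot_G \mr b$ is type-definable over $G$ (equivalently, over the parameters $G, \mr b$; but the claim as phrased should give type-definability over $G$ — let me aim for that). Recall $\mrA \cdot_G \mr b = \{\mr a \cdot \mr b : \mr a \in \mrA,\ \mr a \cnonfork_G \mr b\}$. The plan is to unwind both the definition of $\cdot_G$ and the meaning of $\mr a \cnonfork_G \mr b$ (finite satisfiability of $\tp(\mr a/G,\mr b)$ in $G$, equivalently realization of the type $\bl x \cnonfork_G \mr b$), and to exhibit $\mrA \cdot_G \mr b$ as the solution set of a partial type with parameters.

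First I would fix notation: say $\mrA = \phi(\mrG)$ where $\phi(\bl x) = \{\phi_j(\bl x) : j \in J\}$ is a partial type over $G$. An element $\mr d \in \mrG$ lies in $\mrA \cdot_G \mr b$ iff there exists $\mr a$ with $\mr a \models \phi$, $\mr a \cnonfork_G \mr b$, and $\mr a \cdot \mr b = \mr d$. Using Lemma~\ref{lem_coheir_independence}(4) (coheir extension) together with the existence of coheirs (Proposition~\ref{prop_exisntence_coheirs}), the key observation is that the existence of such an $\mr a$ is witnessed \emph{inside} a coheir: the condition ``$\exists \mr a\, (\phi(\mr a) \wedge \mr a \cnonfork_G \mr b \wedge \mr a\cdot\mr b = \mr d)$'' can be tested by finite satisfiability in $G$. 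Concretely, I would argue that $\mr d \in \mrA\cdot_G\mr b$ if and only if the type
\[
  \phi(\bl x) \ \cup\ \big(\bl x \cnonfork_G \mr b\big) \ \cup\ \{\bl x \cdot \mr b = \mr d\}
\]
is consistent, and that its consistency is itself a type-definable condition on $\mr d$ over $G$. For the ``if'' direction one realizes the type by a tuple $\mr a$; then $\mr a \in \mrA$, $\mr a \cnonfork_G \mr b$, and $\mr a \cdot \mr b = \mr d$, so $\mr d \in \mrA\cdot_G\mr b$. The ``only if'' direction is immediate.

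The heart of the matter is the second claim: that $\{\mr d : \text{the above type is consistent}\}$ is type-definable over $G$. Here is where I would use the defining feature of $\bl x \cnonfork_G \mr b$: it consists exactly of those $\psi(\bl x; \mr b) \in L(G)$ with $M^{|\bl x|} = G^{|\bl x|} \subseteq \psi(\mrG^{|\bl x|}; \mr b)$, i.e. $G \models \forall \bl x\, \psi(\bl x; \mr b')$ interpreted appropriately — the point being that membership of $\psi(\bl x;\bl z)$ in this generating set is itself governed by a condition on $\mr b$ ($G$-indiscernible / finitely satisfiable). I would spell out: $\mr d \in \mrA \cdot_G \mr b$ iff for every finite $\Phi_0 \subseteq \phi$ and every formula $\psi(\bl x;\bl z) \in L(G)$ such that $G^{|\bl x|} \subseteq \psi(G^{|\bl x|}; \mr b)$ — no wait, that quantifies the wrong way. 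Cleaner: use finite satisfiability directly. $\mr d \in \mrA\cdot_G \mr b$ iff there is $\mr a$ with $\tp(\mr a/G,\mr b)$ finitely satisfiable in $G$, $\mr a \models \phi$, $\mr a\cdot\mr b = \mr d$. By compactness this holds iff for every finite $\Phi_0 \subseteq \phi$ there is $\mr a' \in G$ (a \emph{point of $G$}, by finite satisfiability!) with $\mr a' \models \Phi_0$ and — the constraint $\mr a \cdot \mr b = \mr d$ transfers to: for each relevant formula, $\mr b, \mr d$ satisfy the $L(G)$-condition ``$\exists \mr a' \in$ (solutions of $\Phi_0$ realized in $G$) with $\mr a' \cdot \bl z_1 = \bl z_2$''. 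Replacing ``realized in $G$'' by the existential quantifier over $\mrG$ (legitimate since finite satisfiability in $G$ pulls witnesses back to $G$, and $G \preceq \mrG$), I get that $\mr d \in \mrA \cdot_G \mr b$ is equivalent to the partial type
\[
  \Psi(\bl z_2) \ :=\ \Big\{\, \exists \bl x\, \big(\textstyle\bigwedge \Phi_0(\bl x) \wedge \bl x \cdot \mr b = \bl z_2\big)\ :\ \Phi_0 \subseteq \phi \text{ finite} \,\Big\}
\]
being satisfied by $\mr d$ — plus one must check the coheir condition is not lost. This last check is the main obstacle: I must verify that the witness $\mr a$ extracted by compactness from the finite approximations can be taken to satisfy $\mr a \cnonfork_G \mr b$, not merely $\mr a \models \phi$ and $\mr a \cdot \mr b = \mr d$. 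The resolution is exactly Lemma~\ref{lem_coheir_independence}(4): given any $\mr a$ with $\mr a \models \phi$ and $\mr a \cdot \mr b = \mr d$ obtained from a coheir-sequence construction, or more directly, start the construction \emph{inside} a global coheir of $\tp(-/G)$ so that $\cnonfork_G$ holds automatically; then the formula $\bl x \cdot \mr b = \mr d$ is one of the conditions the coheir must satisfy, and its satisfiability over $G,\mr b$ reduces, by finite satisfiability, to satisfiability over $G$ of each finite approximation — which is the type-definable condition on $\mr d$ displayed above.

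So the write-up would be: (1) set $\mrA = \phi(\mrG)$; (2) show $\mr d \in \mrA\cdot_G\mr b$ iff $\{\phi(\bl x)\} \cup (\bl x \cnonfork_G \mr b) \cup \{\bl x\cdot\mr b = \mr d\}$ is consistent, using Lemma~\ref{lem_coheir_independence}(4) to upgrade any realization of $\phi \cup \{\bl x\cdot\mr b=\mr d\}$ to one that is also a coheir; (3) observe that $\bl x \cnonfork_G \mr b$ is finitely satisfiable in $G$ together with $\phi$ and $\bl x \cdot \mr b = \mr d$ precisely when, for each finite fragment, a witness exists \emph{in $G$}, which is an $L(G)$-condition on the pair $(\mr b, \mr d)$; (4) conclude that $\mrA \cdot_G \mr b = \Psi(\mrG)$ for the partial type $\Psi$ over $G$ built from these $L(G)$-conditions (with $\mr b$ as the only extra parameter, or absorbed if one wants type-definability strictly over $G$ using that $\tp(\mr b/G)$ is itself type-definable over $G$). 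I expect step (2) — ensuring the coheir condition survives the compactness/extraction argument — to be the only place requiring real care; everything else is bookkeeping with the definitions in Section~\ref{coheirs}.
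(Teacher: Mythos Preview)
There is a genuine gap. First a minor point: in this paper $\mrA\cdot_G{\mr b}$ abbreviates $\mrA\cdot_G\O({\mr b}/G)$, not the set $\{{\mr a}\cdot{\mr b}:{\mr a}\in\mrA,\ {\mr a}\cnonfork_G{\mr b}\}$ you write down (the latter is what the paper calls $\mrA\cdot_G\{{\mr b}\}$); you gesture at the fix in your final parenthetical, so set that aside. The real problem is in steps~(2)--(4). Your displayed type $\Psi$ drops the coheir condition entirely and therefore defines the larger set $\{{\mr d}:\exists\,{\mr a}\in\mrA,\ {\mr a}\cdot{\mr b}={\mr d}\}$, not $\mrA\cdot_G\{{\mr b}\}$. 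To repair this you propose to ``upgrade any realization of $\phi\cup\{{\bl x}\cdot{\mr b}={\mr d}\}$ to one that is also a coheir'' via Lemma~\ref{lem_coheir_independence}\,(4). But that lemma only extends an \emph{existing} coheir to a larger base; it does not manufacture ${\mr a}\cnonfork_G{\mr b}$ from an arbitrary ${\mr a}$. In general no $G,{\mr b}$-conjugate of a witness to $\phi\cup\{{\bl x}\cdot{\mr b}={\mr d}\}$ need satisfy the coheir condition, so the upgrading step fails and $\Psi$ defines the wrong set.

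The remedy is not to eliminate the coheir condition at all. Your step~(2) equivalence is already trivially true without any upgrading: by saturation, ${\mr d}\in\mrA\cdot_G\{{\mr b}\}$ iff the partial type $\phi({\bl x})\cup({\bl x}\cnonfork_G{\mr b})\cup\{{\bl x}\cdot{\mr b}={\mr d}\}$ is realized. Since ${\bl x}\cnonfork_G{\mr b}$ is itself a partial type over $G,{\mr b}$, compactness expresses realizability as the conjunction of the formulas $\exists{\bl x}\,\bigwedge p_0$ over finite fragments $p_0$, each a formula over $G,{\mr b}$; hence $\mrA\cdot_G\{{\mr b}\}$ is type-definable over $G,{\mr b}$. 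This is exactly the paper's argument. One then absorbs ${\mr b}$ by replacing it with a variable ${\bl z}$ constrained by $q({\bl z})=\tp({\mr b}/G)$ and existentially quantifying, using invariance of $\cnonfork_G$ to see that the resulting type over $G$ defines $\mrA\cdot_G{\mr b}$.
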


\begin{proof} 
   The set  $\mrA\cdot_G{\mr b}$ is the union of $\mrA\cdot_G\{{\mr c}\}$ 
   as ${\mr c}$ ranges in ${\mr b}_G$.
   The set $\mrA\cdot_G\{{\mr c}\}$ is type definable, say by the 
   the type $\E{\bl y}\, p({\bl x}, {\bl y},{\mr c})$ where
   
   \ceq{\hfill p({\bl x}, {\bl y},  {\mr c})}
      {=}
      {{\bl y}\cnonfork_G{\mr c}
      \ \ \wedge\ \ 
      {\bl y}{\cdot}{\mr c}={\bl x}
      \ \ \wedge\ \ 
      {\bl y}\in\mrA}
   
   Note that, by the invariance of $\cnonfork_G$, if $f\in\Aut(\mrG/G)$,
   then $\E{\bl y}\, p({\bl x}, {\bl y},f{\mr c})$ defines 
   $\mrA\cdot_G\{f{\mr c}\}$.
   Therefore if $q({\bl z})=\tp({\mr b}/G)$ then 
   $\E{\bl y},{\bl z}\, \big[q({\bl z})\cup p({\bl x}, {\bl y},{\bl z})\big]$ 
   defines $\mrA\cdot_G {\mr b}$.
\end{proof}

By the invariance of $\cnonfork_G$, for every $f\in\Aut(\mrG/G)$
we have $f[\mrA\cdot_G\mrB]=f[\mrA]\cdot_Gf[\mrB]$.
Therefore when $\mrA$ and $\mrB$ are invariant over $G$,
also $\mrA\cdot_G\mrB$ is invariant over $G$.
Below we mainly deal with invariant sets.

\begin{proposition}\label{prop_semi_associative}
   For all $G$-invariant sets $\mrA$, $\mrB$, and  $\mrC$

   \ceq{\hfill\mrA\cdot_G\big(\mrB\cdot_G\mrC\big)}
      {\subseteq}
      {\big(\mrA\cdot_G\mrB\big)\cdot_G\mrC}.
\end{proposition}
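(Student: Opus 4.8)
The plan is to take an arbitrary element of the left-hand side, say ${\mr u}={\mr a}\cdot{\mr d}$ where ${\mr a}\in\mrA$, ${\mr d}\in\mrB\cdot_G\mrC$ and ${\mr a}\cnonfork_G{\mr d}$, and then to write ${\mr d}={\mr b}\cdot{\mr c}$ with ${\mr b}\in\mrB$, ${\mr c}\in\mrC$ and ${\mr b}\cnonfork_G{\mr c}$. By associativity ${\mr u}=({\mr a}\cdot{\mr b})\cdot{\mr c}$, so it would be enough to arrange that ${\mr a}\cnonfork_G{\mr b}$ and $({\mr a}\cdot{\mr b})\cnonfork_G{\mr c}$. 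The obstacle is that the hypothesis only delivers ${\mr a}\cnonfork_G{\mr b}\cdot{\mr c}$, which is genuinely weaker than ${\mr a}\cnonfork_G({\mr b},{\mr c})$; and one cannot simply extend $\tp({\mr a}/G,{\mr d})$ to a coheir over $G,{\mr b},{\mr c}$ and work with the resulting realization, since passing to a different ${\mr a}'$ would in general change the product, ${\mr a}'\cdot{\mr d}\neq{\mr a}\cdot{\mr d}$. Getting past this is the main point.

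The way around it is to extend and then pull back by an automorphism fixing ${\mr d}$. Apply coheir extension (Lemma~\ref{lem_coheir_independence}.4) to ${\mr a}\cnonfork_G{\mr d}$ with the pair $({\mr b},{\mr c})$ in the role of the extra parameters: this yields ${\mr a}'\equiv_{G,{\mr d}}{\mr a}$ with ${\mr a}'\cnonfork_G({\mr d},{\mr b},{\mr c})$. Since ${\mr a}'$ and ${\mr a}$ have the same type over the small set $G{\mr d}$, there is $f\in\Aut(\mrG/G{\mr d})$ with $f{\mr a}'={\mr a}$. Put ${\mr b}':=f{\mr b}$ and ${\mr c}':=f{\mr c}$. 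Applying $f$ and invariance of $\cnonfork_G$ (Lemma~\ref{lem_coheir_independence}.1) gives ${\mr a}\cnonfork_G({\mr d},{\mr b}',{\mr c}')$; because $f$ fixes ${\mr d}$ we still have ${\mr d}={\mr b}'\cdot{\mr c}'$; and because $\mrB,\mrC$ are $G$-invariant and $f\in\Aut(\mrG/G)$ we have ${\mr b}'\in\mrB$, ${\mr c}'\in\mrC$, and ${\mr b}'\cnonfork_G{\mr c}'$ from ${\mr b}\cnonfork_G{\mr c}$. The essential gain is that on the ${\mr a}$-side we have recovered the \emph{original} ${\mr a}$, hence the original product ${\mr a}\cdot{\mr d}$, at the only cost of replacing ${\mr b},{\mr c}$ by conjugates with the same product.

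The rest is bookkeeping with the properties of $\cnonfork_G$. From ${\mr a}\cnonfork_G({\mr d},{\mr b}',{\mr c}')$ and finite character (Lemma~\ref{lem_coheir_independence}.2) we get ${\mr a}\cnonfork_G{\mr b}'$, hence ${\mr a}\cdot{\mr b}'\in\mrA\cdot_G\mrB$. From ${\mr a}\cnonfork_G({\mr b}',{\mr c}')$ and ${\mr b}'\cnonfork_G{\mr c}'$, transitivity (Lemma~\ref{lem_coheir_independence}.3) yields $({\mr a},{\mr b}')\cnonfork_G{\mr c}'$; and since the product is the interpretation of a function symbol — so that a formula over $G{\mr c}'$ satisfied by ${\mr a}\cdot{\mr b}'$ pulls back to a formula over $G{\mr c}'$ satisfied by the pair $({\mr a},{\mr b}')$, which has a solution in $G^2$ whose product lies in $G$ — finite satisfiability transfers and ${\mr a}\cdot{\mr b}'\cnonfork_G{\mr c}'$. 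Therefore $({\mr a}\cdot{\mr b}')\cdot{\mr c}'\in(\mrA\cdot_G\mrB)\cdot_G\mrC$, and by associativity this element equals ${\mr a}\cdot({\mr b}'\cdot{\mr c}')={\mr a}\cdot{\mr d}={\mr u}$. As ${\mr u}$ was arbitrary, the inclusion follows. I expect the only delicate step to be the one singled out above (replacing the witnesses by conjugates over $G{\mr d}$); everything else is routine use of the four listed properties of $\cnonfork_G$ together with associativity and the fact that $\cdot$ is a function symbol.
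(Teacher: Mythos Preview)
Your proof is correct and uses the same core ingredients as the paper's: coheir extension to upgrade ${\mr a}\cnonfork_G{\mr b}\cdot{\mr c}$ to independence from the pair $({\mr b},{\mr c})$, then transitivity to get $({\mr a},{\mr b})\cnonfork_G{\mr c}$.

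The one place you take a detour is the automorphism step. You write that one ``cannot simply extend $\tp({\mr a}/G,{\mr d})$ to a coheir over $G,{\mr b},{\mr c}$ and work with the resulting realization, since passing to a different ${\mr a}'$ would in general change the product.'' That concern is unfounded: while indeed ${\mr a}'\cdot{\mr d}$ need not equal ${\mr a}\cdot{\mr d}$ as elements, the relation ${\mr a}'\equiv_{G,{\mr d}}{\mr a}$ gives ${\mr a}'\cdot{\mr d}\equiv_G{\mr a}\cdot{\mr d}$, and the right-hand side $(\mrA\cdot_G\mrB)\cdot_G\mrC$ is $G$-invariant. So once you show ${\mr a}'\cdot{\mr b}\cdot{\mr c}$ lies in the right-hand side, invariance immediately puts ${\mr a}\cdot{\mr b}\cdot{\mr c}$ there too. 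This is exactly what the paper does, in one line, skipping your pull-back by $f\in\Aut(\mrG/G{\mr d})$ entirely. Your automorphism argument is of course a valid (and more explicit) way of packaging the same invariance, but it is not needed to ``get past'' anything.
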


\begin{proof}
   Let ${\mr a}{\cdot}{\mr b}{\cdot}{\mr c}$ be an arbitrary element of the
   l.h.s.\@ where ${\mr a}\cnonfork_G{\mr b}{\cdot}{\mr c}$ 
   and ${\mr b}\cnonfork_G{\mr c}$.
   By extension (Lemma~\ref{lem_coheir_independence}),
   there exists ${\mr a'}$ such that 
   ${\mr a}
      \equiv_{G,\,{\mr b}{\cdot}{\mr c}}
      {\mr a'}
      \cnonfork_G
      {\mr b}{\cdot}{\mr c},\,  {\mr b},\, {\mr c}$.
   By transitivity (again Lemma~\ref{lem_coheir_independence}),
   ${\mr a'}{\cdot}{\mr b}\cnonfork_G{\mr c}$.
   Therefore ${\mr a'}{\cdot}{\mr b}{\cdot}{\mr c}$ belongs to the r.h.s.
   Finally, as ${\mr a'}\equiv_{G,\,{\mr b}{\cdot}{\mr c}}{\mr a}$, also 
   ${\mr a}{\cdot}{\mr b}{\cdot}{\mr c}$ belongs to the r.h.s.\@ by invariance.
\end{proof}

Let $\mrA$ be a non-empty set.
When $\mrA\cdot_G\mrA\subseteq\mrA$, we say that it is 
\emph{idempotent\/} (over $G$).

\begin{corollary}\label{corol_min_idempotent}
   Assume $\mrB\subseteq\mrA$ are both $G$-invariant.
   Then if $\mrA$ is idempotent,
   also $\mrA\cdot_G\mrB$ is idempotent.
\end{corollary}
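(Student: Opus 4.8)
The plan is to establish the inclusion $(\mrA\cdot_G\mrB)\cdot_G(\mrA\cdot_G\mrB)\subseteq\mrA\cdot_G\mrB$ together with non-emptiness of $\mrA\cdot_G\mrB$, deducing the inclusion almost immediately from Proposition~\ref{prop_semi_associative}. First I would record two trivialities. One: $\cdot_G$ is monotone in each argument — if $\mrD\subseteq\mrE$ then $\mrD\cdot_G\mrV\subseteq\mrE\cdot_G\mrV$ and $\mrV\cdot_G\mrD\subseteq\mrV\cdot_G\mrE$ — straight from the definition. Two: $\mrA\cdot_G\mrB$ is $G$-invariant, as already observed in the text since $\mrA$ and $\mrB$ are. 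Feeding $\mrB\subseteq\mrA$ and the idempotence $\mrA\cdot_G\mrA\subseteq\mrA$ into monotonicity yields the auxiliary inclusion $\mrA\cdot_G\mrB\subseteq\mrA\cdot_G\mrA\subseteq\mrA$.

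With this in hand the main step is a short chain of inclusions:
\begin{align*}
   (\mrA\cdot_G\mrB)\cdot_G(\mrA\cdot_G\mrB)
      &\subseteq \big((\mrA\cdot_G\mrB)\cdot_G\mrA\big)\cdot_G\mrB\\
      &\subseteq (\mrA\cdot_G\mrA)\cdot_G\mrB \subseteq \mrA\cdot_G\mrB.
\end{align*}
The first inclusion is Proposition~\ref{prop_semi_associative} applied to the $G$-invariant sets $\mrA\cdot_G\mrB$, $\mrA$, $\mrB$ (playing the roles of $\mrA$, $\mrB$, $\mrC$ there); the second follows from $\mrA\cdot_G\mrB\subseteq\mrA$ and monotonicity in the left argument; the third is idempotence of $\mrA$ together with monotonicity once more. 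Morally this is nothing but the semigroup triviality $ABAB\subseteq AAAB\subseteq AB$, obtained from $B\subseteq A=AA$. The one point that genuinely needs attention — and which I would flag as the main, if minor, obstacle — is that $\cdot_G$ is only sub-associative, so one has to check that the single one-directional inclusion supplied by Proposition~\ref{prop_semi_associative} points the way we need. It does: the proposition re-associates a right-nested product into a left-nested one as an inclusion, so multiplying our idempotent-to-be on the left turns that inclusion into an upper bound for the product, which is exactly what is wanted.

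It remains to observe that $\mrA\cdot_G\mrB\neq\0$, as the definition of idempotence requires. Since $\mrA$ is idempotent it is non-empty, and $\mrB$ is non-empty by assumption; fix ${\mr a}\in\mrA$ and ${\mr b}\in\mrB$. As $G$ is a model, $\tp({\mr a}/G)$ is finitely satisfied in $G$, so by the existence of coheirs (Proposition~\ref{prop_exisntence_coheirs}, then restricting to $G,{\mr b}$ and using saturation) there is ${\mr a'}\equiv_G{\mr a}$ with ${\mr a'}\cnonfork_G{\mr b}$; then ${\mr a'}\in\mrA$ by $G$-invariance, and hence ${\mr a'}{\cdot}{\mr b}\in\mrA\cdot_G\mrB$. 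I do not expect any real difficulty anywhere in this argument: the entire content sits in Proposition~\ref{prop_semi_associative}, which is already available, and everything else is monotonicity bookkeeping.
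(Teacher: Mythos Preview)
Your argument is correct and essentially identical to the paper's: both reduce to the chain $(\mrA\cdot_G\mrB)\cdot_G(\mrA\cdot_G\mrB)\subseteq(\mrA\cdot_G\mrA)\cdot_G\mrB\subseteq\mrA\cdot_G\mrB$ using $\mrA\cdot_G\mrB\subseteq\mrA$ and Proposition~\ref{prop_semi_associative}, the only cosmetic difference being that the paper applies monotonicity first and sub-associativity second, while you do the reverse. Your explicit check of non-emptiness is a nicety the paper omits.
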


\begin{proof}
   We check that if $\mrA$ is idempotent so is $\mrA\cdot_G\mrB$

   \ceq{\hfill\big(\mrA\cdot_G\mrB\big)\ \cdot_G\ \big(\mrA\cdot_G\mrB\big)}
      {\subseteq}
      {\mrA\ \cdot_G\ \big(\mrA\cdot_G\mrB\big)}
      \hfill because $\mrA\cdot_G\mrB\subseteq\mrA$

   \ceq{}
      {\subseteq}
      {\big(\mrA\cdot_G\mrA\big)\cdot_G\mrB}
      \hfill by the lemma above

   \ceq{}
      {\subseteq}
      {\mrA\cdot_G\mrB}
\end{proof}

We show that, under the assumption of stationarity,
the operation $\cdot_G$ is associative.
The quotient map $\mrG\to\mrG/{\equiv_G}$ is almost a homomorphism.

\begin{proposition}\label{prop_orbits_main}
   Assume $\cnonfork_G$ is $1$-stationary, see 
   Definition~\ref{def_coheir_stationary}.
   Fix ${\mr a}\cnonfork_G{\mr b}$ arbitrarily.
   Then ${\mr a'}{\cdot}{\mr b'}\equiv_G{\mr a}{\cdot}{\mr b}$ for every 
   ${\mr a'}\equiv_G{\mr a}$ and  ${\mr b'}\equiv_G{\mr b}$ such that ${\mr a'}\cnonfork_G{\mr b'}$.
   Or, in other words,

   \ceq{\hfill({\mr a}{\cdot}{\mr b})_G}
      {=}
      {{\mr a}\cdot_G{\mr b}.}
\end{proposition}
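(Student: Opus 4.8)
The plan is to prove the displayed equality $({\mr a}{\cdot}{\mr b})_G={\mr a}\cdot_G{\mr b}$; the inclusion $\supseteq$ is exactly the first assertion of the statement, while $\subseteq$ is automatic and uses no stationarity: if $d\equiv_G{\mr a}{\cdot}{\mr b}$, choose $f\in\Aut(\mrG/G)$ with $d=f({\mr a}{\cdot}{\mr b})=f{\mr a}\cdot f{\mr b}$; then $f{\mr a}\equiv_G{\mr a}$, $f{\mr b}\equiv_G{\mr b}$, and $f{\mr a}\cnonfork_G f{\mr b}$ by invariance (Lemma~\ref{lem_coheir_independence}), so $d\in{\mr a}\cdot_G{\mr b}$. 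The real work is the other inclusion, which I would carry out in two steps: a ``core case'' in which only the left factor moves, and then a reduction of the general case to it.

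For the core case, suppose ${\mr a}'\equiv_G{\mr a}$ and both ${\mr a}\cnonfork_G{\mr b}$ and ${\mr a}'\cnonfork_G{\mr b}$ hold, with the same ${\mr b}$; I claim ${\mr a}'{\cdot}{\mr b}\equiv_G{\mr a}{\cdot}{\mr b}$. Here is where $1$-stationarity enters. Since ${\mr a}$ and ${\mr a}'$ are single elements with $\tp({\mr a}/G)=\tp({\mr a}'/G)$ and both are $\cnonfork_G$-independent from ${\mr b}$, both realize the type ${\mr a}\equiv_G{\bl x}\cnonfork_G{\mr b}$ of Definition~\ref{def_coheir_idepencence} (with ${\bl x}$ a single variable), which by $1$-stationarity is complete over $G,{\mr b}$. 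Hence ${\mr a}\equiv_{G,{\mr b}}{\mr a}'$, so there is $g\in\Aut(\mrG/G,{\mr b})$ with $g{\mr a}={\mr a}'$; then $g({\mr a}{\cdot}{\mr b})=g{\mr a}\cdot g{\mr b}={\mr a}'{\cdot}{\mr b}$, and since $g$ fixes $G$ pointwise we conclude ${\mr a}{\cdot}{\mr b}\equiv_G{\mr a}'{\cdot}{\mr b}$ (in fact $\equiv_{G,{\mr b}}$).

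For the general case, let ${\mr a}'\equiv_G{\mr a}$, ${\mr b}'\equiv_G{\mr b}$, ${\mr a}'\cnonfork_G{\mr b}'$. Pick $f\in\Aut(\mrG/G)$ with $f{\mr b}'={\mr b}$ and put ${\mr a}''=f{\mr a}'$. Then ${\mr a}''\cnonfork_G{\mr b}$ by invariance, ${\mr a}''\equiv_G{\mr a}$, and ${\mr a}''{\cdot}{\mr b}=f{\mr a}'\cdot f{\mr b}'=f({\mr a}'{\cdot}{\mr b}')$. Applying the core case to ${\mr a}$ and ${\mr a}''$ over the common parameter ${\mr b}$ (legitimate, since ${\mr a}\cnonfork_G{\mr b}$ is part of the hypothesis) gives ${\mr a}{\cdot}{\mr b}\equiv_G{\mr a}''{\cdot}{\mr b}=f({\mr a}'{\cdot}{\mr b}')\equiv_G{\mr a}'{\cdot}{\mr b}'$, the last step because $f$ fixes $G$; chaining the two equivalences yields ${\mr a}{\cdot}{\mr b}\equiv_G{\mr a}'{\cdot}{\mr b}'$, as required.

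The only delicate point is the core case, and the load-bearing hypothesis there is $1$-stationarity, used precisely to upgrade ``same type over $G$ and both $\cnonfork_G$-independent from ${\mr b}$'' to ``same type over $G,{\mr b}$''; everything else is invariance of $\cnonfork_G$ together with the strong homogeneity of $\mrG$. Single elements are all that is needed throughout because $\cdot$ is binary, which is why $1$-stationarity rather than full stationarity suffices.
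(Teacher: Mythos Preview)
Your proof is correct and follows essentially the same approach as the paper: for $\supseteq$ the paper also reduces by invariance to the case where $b'=b$ (``it suffices to show that the l.h.s.\ contains ${\mr a}\cdot_G\{{\mr b}\}$'') and then uses $1$-stationarity exactly as in your core case to conclude ${\mr a}\equiv_{G,{\mr b}}{\mr a}'$. Your write-up is slightly more explicit about the automorphism realizing the reduction, but the argument is the same.
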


\begin{proof}
   We prove two inclusions, only the second one requires stationarity.

   $\subseteq$ \ As ${\mr a}\cnonfork_G{\mr b}$ holds by hypothesis,
   ${\mr a}{\cdot}{\mr b}\in{\mr a}\cdot_G{\mr b}$.
   The inclusion follows by invariance.

   $\supseteq$ \ By invariance it suffices to show that the l.h.s.\@ contains
   ${\mr a}\cdot_G\{{\mr b}\}$.
   Let ${\mr a'}\in{\mr a}_G$ such that ${\mr a'}\cnonfork_G{\mr b}$.
   We claim that ${\mr a'}{\cdot}{\mr b}\in({\mr a}{\cdot}{\mr b})_G$.
   Both ${\mr a}$ and ${\mr a'}$ satisfy 
   ${\mr a}\equiv_G{\bl x}\cnonfork_G{\mr b}$.
   By $1$-stationarity, ${\mr a}\equiv_{G,\,{\mr b}}{\mr a'}$.
   Hence ${\mr a}{\cdot}{\mr b}\equiv_G{\mr a'}{\cdot}{\mr b}$.
\end{proof}

\begin{corollary}[(associativity)]\label{corol_orbits_associative}
   Assume $\cnonfork_G$ is $1$-stationary.
   Then for all $G$-invariant sets $\mrA$, $\mrB$ and  $\mrC$

   \ceq{\hfill\mrA\cdot_G\big(\mrB\cdot_G\mrC\big)}
      {=}
      {\big(\mrA\cdot_G\mrB\big)\cdot_G\mrC}.
\end{corollary}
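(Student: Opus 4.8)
The plan is to prove the two inclusions separately. One of them, $\mrA\cdot_G\big(\mrB\cdot_G\mrC\big)\subseteq\big(\mrA\cdot_G\mrB\big)\cdot_G\mrC$, is exactly Proposition~\ref{prop_semi_associative} and does not use stationarity, so the content of the statement is the reverse inclusion.

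To prove $\big(\mrA\cdot_G\mrB\big)\cdot_G\mrC\subseteq\mrA\cdot_G\big(\mrB\cdot_G\mrC\big)$, I would take a generic element ${\mr a}{\cdot}{\mr b}{\cdot}{\mr c}$ of the left-hand side, so ${\mr a}\in\mrA$, ${\mr b}\in\mrB$, ${\mr c}\in\mrC$, ${\mr a}\cnonfork_G{\mr b}$ and ${\mr a}{\cdot}{\mr b}\cnonfork_G{\mr c}$, and re-choose ${\mr a}$ and ${\mr b}$ inside their $G$-orbits so that the independence is organised ``from the right''. Since $G$ is a model, every type over $G$ is finitely satisfiable in $G$; hence, by coheir extension (Lemma~\ref{lem_coheir_independence}, applied with empty first parameter; equivalently Proposition~\ref{prop_exisntence_coheirs}) there is ${\mr b'}\equiv_G{\mr b}$ with ${\mr b'}\cnonfork_G{\mr c}$, and then there is ${\mr a'}\equiv_G{\mr a}$ with ${\mr a'}\cnonfork_G{\mr b'},{\mr c}$. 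As $\mrA,\mrB$ are $G$-invariant, ${\mr a'}\in\mrA$ and ${\mr b'}\in\mrB$. From ${\mr a'}\cnonfork_G{\mr b'},{\mr c}$ and ${\mr b'}\cnonfork_G{\mr c}$, transitivity (Lemma~\ref{lem_coheir_independence}) gives ${\mr a'}{\cdot}{\mr b'}\cnonfork_G{\mr c}$, while from ${\mr a'}\cnonfork_G{\mr b'},{\mr c}$ we also read off ${\mr a'}\cnonfork_G{\mr b'}$ and ${\mr a'}\cnonfork_G{\mr b'}{\cdot}{\mr c}$. Therefore ${\mr b'}{\cdot}{\mr c}\in\mrB\cdot_G\mrC$ and ${\mr a'}{\cdot}\big({\mr b'}{\cdot}{\mr c}\big)\in\mrA\cdot_G\big(\mrB\cdot_G\mrC\big)$.

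To conclude, I need this last element — which is ${\mr a'}{\cdot}{\mr b'}{\cdot}{\mr c}$ — to be $\equiv_G$-conjugate to the element ${\mr a}{\cdot}{\mr b}{\cdot}{\mr c}$ we started with, and this is the only place where $1$-stationarity enters, through Proposition~\ref{prop_orbits_main}. Since ${\mr a'}\equiv_G{\mr a}$, ${\mr b'}\equiv_G{\mr b}$ and ${\mr a'}\cnonfork_G{\mr b'}$, that proposition gives $\big({\mr a'}{\cdot}{\mr b'}\big)_G={\mr a'}\cdot_G{\mr b'}={\mr a}\cdot_G{\mr b}=\big({\mr a}{\cdot}{\mr b}\big)_G$, so ${\mr a'}{\cdot}{\mr b'}\equiv_G{\mr a}{\cdot}{\mr b}$; applying it once more, now to ${\mr a'}{\cdot}{\mr b'}\cnonfork_G{\mr c}$ and ${\mr a}{\cdot}{\mr b}\cnonfork_G{\mr c}$, yields $\big({\mr a'}{\cdot}{\mr b'}{\cdot}{\mr c}\big)_G=\big({\mr a}{\cdot}{\mr b}{\cdot}{\mr c}\big)_G$. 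As $\mrA\cdot_G\big(\mrB\cdot_G\mrC\big)$ is $G$-invariant, it follows that ${\mr a}{\cdot}{\mr b}{\cdot}{\mr c}\in\mrA\cdot_G\big(\mrB\cdot_G\mrC\big)$, which is what we wanted.

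The main obstacle is conceptual and lies exactly in this last step. Because coheir independence is one-sided, one cannot hope to turn ${\mr b}\cnonfork_G{\mr c}$ on by an automorphism keeping $({\mr a},{\mr b},{\mr c})$ inside its own $G$-orbit: every such automorphism preserves whether ${\mr b}\cnonfork_G{\mr c}$, and we have no reason to assume this. The point of the argument is that one is allowed to move ${\mr a}$ and ${\mr b}$ to genuinely different representatives of their orbits — possible because all types over the model $G$ are finitely satisfiable in $G$ — and that $1$-stationarity, via Proposition~\ref{prop_orbits_main}, is precisely what guarantees that doing so leaves the triple product unchanged modulo $\equiv_G$. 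The only routine points are that $\cnonfork_G$ is preserved under applying the ($\emptyset$-definable) multiplication to either argument, which is used silently also in the proof of Proposition~\ref{prop_semi_associative}.
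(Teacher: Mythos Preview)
Your proof is correct. Both arguments use the same two ingredients, Proposition~\ref{prop_semi_associative} for one inclusion and Proposition~\ref{prop_orbits_main} (hence $1$-stationarity) to close the gap, but you organise them differently from the paper. The paper first reduces to the case where $\mrA,\mrB,\mrC$ are single $G$-orbits; then Proposition~\ref{prop_orbits_main} shows that each side of the equation is itself a single $G$-orbit, so the inclusion coming from Proposition~\ref{prop_semi_associative} is automatically an equality. You instead attack the reverse inclusion directly: given an element of $\big(\mrA\cdot_G\mrB\big)\cdot_G\mrC$, you re-choose representatives ${\mr a'},{\mr b'}$ via coheir extension so that the independence is configured from the right, and then invoke Proposition~\ref{prop_orbits_main} twice to check that the triple product is unchanged modulo $\equiv_G$. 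The paper's route is shorter and more structural (the ``inclusion between orbits is equality'' trick does the work at once), while yours makes explicit exactly where stationarity enters and would adapt more easily if one only wanted to compare specific elements rather than the whole sets.
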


\begin{proof}
   We can assume that $\mrA$, $\mrB$ and $\mrC$ are $G$-orbits.
   Say of ${\mr a}$, ${\mr b}$, and ${\mr c}$ respectively.
   We can assume that ${\mr a}\cnonfork_G{\mr b}{\cdot}{\mr c}$ and 
   ${\mr b}\cnonfork_G{\mr c}$.
   By Proposition~\ref{prop_orbits_main} the set on the l.h.s.\@ equals 
   $({\mr a}{\cdot}{\mr b}{\cdot}{\mr c})_G$.
   By a similar argument the set on the r.h.s.\@ equals 
   $({\mr a'}{\cdot}{\mr b'}{\cdot}{\mr c'})_G$ for some elements 
   ${\mr a'}$, ${\mr b'}$, and ${\mr c'}$.
   Proposition~\ref{prop_semi_associative} proves that inclusion 
   $\subseteq$ holds in general.
   But inclusion between orbits amounts to equality.
\end{proof}

The following lemma proves the existence of idempotent orbits.
The proof is self-contained, i.e.\@ it does not use Ellis's theorem on
the existence of idempotents in compact left topological semigroups
(however, the argument is very similar).
As a comparison, finding a proof in the setting of nonstandard analysis
is listed as an open problem in~\cite{Mauro2}.

\begin{lemma}\label{lem_Hindman}
   Assume $\cnonfork_G$ is $1$-stationary.
   If $\mrA$ is minimal among the idempotent sets that are type-definable
   over $G$, then $\mrA={\mr b}_G$ for some (any) ${\mr b}\in\mrA$.
   \end{lemma}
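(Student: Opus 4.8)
The plan is to mimic the standard Ellis-type argument for the existence of minimal idempotents, but carried out with the relation $\cnonfork_G$ in place of topological closure. Fix $\mrA$ minimal among the idempotent type-definable-over-$G$ sets, and fix any ${\mr b}\in\mrA$. The goal is to show ${\mr b}_G\supseteq\mrA$ (the reverse inclusion being automatic, since ${\mr b}_G$ is the orbit of an element of the $G$-invariant set $\mrA$). First I would consider the set $\mrA\cdot_G\{{\mr b}\}$; by Lemma~\ref{lem_typedef_Ab} this is type-definable over $G$ (taking the $\mrB$ there to be the singleton's orbit, or directly $\{{\mr b}\}$ as the excerpt's proof of that lemma does), and $\mrA\cdot_G\{{\mr b}\}\subseteq\mrA\cdot_G\mrA\subseteq\mrA$ since ${\mr b}\in\mrA$ and $\mrA$ is idempotent. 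Moreover $\mrA\cdot_G\{{\mr b}\}$ is nonempty by coheir extension.

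Next I want to produce, inside $\mrA\cdot_G\{{\mr b}\}$, a set that is again idempotent and type-definable over $G$ — so that minimality of $\mrA$ forces it to equal $\mrA$. The natural candidate is $\mrA\cdot_G\mrC$ where $\mrC={\mr b}_G$: by Corollary~\ref{corol_min_idempotent} (with $\mrB=\mrC={\mr b}_G\subseteq\mrA$) this is idempotent, and by Lemma~\ref{lem_typedef_Ab} it is type-definable over $G$; also $\mrA\cdot_G{\mr b}_G\subseteq\mrA$. Since $\mrA$ is minimal we get $\mrA\cdot_G{\mr b}_G=\mrA$, hence ${\mr b}\in\mrA\cdot_G{\mr b}_G$, i.e.\ there are ${\mr a}\in\mrA$ and ${\mr b'}\in{\mr b}_G$ with ${\mr a}\cnonfork_G{\mr b'}$ and ${\mr a}{\cdot}{\mr b'}={\mr b}$. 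Replacing ${\mr b'}$ by ${\mr b}$ via an automorphism over $G$ (invariance of $\cnonfork_G$), we may assume ${\mr a}{\cdot}{\mr b}={\mr b}$ with ${\mr a}\cnonfork_G{\mr b}$, ${\mr a}\in\mrA$.

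Now I would exploit this "left identity" relation to pull any element of $\mrA$ into ${\mr b}_G$. Take an arbitrary ${\mr d}\in\mrA$. Using coheir extension I can find ${\mr b''}\equiv_{G,{\mr a}}{\mr b}$ with ${\mr b''}\cnonfork_G{\mr a},{\mr d}$; then ${\mr a}{\cdot}{\mr b''}\equiv_{G}{\mr a}{\cdot}{\mr b}={\mr b}$ (the key point being that $\tp({\mr a},{\mr b''}/G)=\tp({\mr a},{\mr b}/G)$, so the product relation is preserved), so ${\mr a}{\cdot}{\mr b''}\in{\mr b}_G$. On the other hand, since $\mrA$ is idempotent and ${\mr d}\cdot_G\mrA$-type membership is governed by $\cnonfork_G$, I want to arrange ${\mr d}{\cdot}{\mr a}\in\mrA$ with ${\mr d}\cnonfork_G{\mr a}$ and then combine: with ${\mr d}\cnonfork_G{\mr a}{\cdot}{\mr b''}$ and ${\mr a}\cnonfork_G{\mr b''}$, transitivity and associativity of products under $\cnonfork_G$ give $({\mr d}{\cdot}{\mr a}){\cdot}{\mr b''}={\mr d}{\cdot}({\mr a}{\cdot}{\mr b''})$, and since ${\mr a}{\cdot}{\mr b''}\equiv_G{\mr b}$ while we can also see ${\mr d}{\cdot}{\mr a}$ plays the role of a left-identity-witness, the product collapses to show ${\mr d}\in{\mr b}_G$. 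The clean way to package this: show that the set $E=\{{\mr a}\in\mrA : {\mr a}{\cdot}{\mr b}={\mr b},\ {\mr a}\cnonfork_G{\mr b}\}$-style reasoning, applied symmetrically, forces $\mrA$ to be a single orbit; equivalently, $\mrA\cdot_G\{{\mr b}\}\subseteq{\mr b}_G$ and $\mrA\cdot_G\{{\mr b}\}=\mrA$ by a second application of minimality (noting $\mrA\cdot_G\{{\mr b}\}$ is idempotent because it lies between $\mrA\cdot_G{\mr b}_G$ reasoning and $\mrA$, or by a direct check using Proposition~\ref{prop_semi_associative}).

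The step I expect to be the main obstacle is the bookkeeping that turns "$\mrA$ contains an element ${\mr a}$ with ${\mr a}{\cdot}{\mr b}={\mr b}$, ${\mr a}\cnonfork_G{\mr b}$" into "${\mr d}\in{\mr b}_G$ for all ${\mr d}\in\mrA$": one has to choose the realizations ${\mr b''}$, ${\mr a''}$ with exactly the right independence relations (over $G$ together with the already-chosen parameters), invoke $1$-stationarity (via Proposition~\ref{prop_orbits_main}) to know that the relevant products are determined up to $\equiv_G$, and use Proposition~\ref{prop_semi_associative} to regroup. Stationarity is essential precisely here, since without it $\mrA\cdot_G{\mr b}$ need not be a single orbit even when $\mrA$ is; with $1$-stationarity, Proposition~\ref{prop_orbits_main} gives $({\mr a}{\cdot}{\mr b})_G={\mr a}\cdot_G{\mr b}$, which is what makes the orbit of a product well-defined and lets the minimality argument close. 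Once the independence diagram is set up correctly, each individual implication is one of the four clauses of Lemma~\ref{lem_coheir_independence} or Proposition~\ref{prop_semi_associative}, so the argument is short.
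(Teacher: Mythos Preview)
Your plan matches the paper's Ellis-type argument through the first use of minimality ($\mrA\cdot_G{\mr b}=\mrA$), but the second half has a genuine gap. The direct attempt to push an arbitrary ${\mr d}\in\mrA$ into ${\mr b}_G$ does not work: from a single ${\mr a}$ with ${\mr a}\cdot_G{\mr b}={\mr b}_G$, your manipulation at best produces ${\mr d}\cdot_G({\mr a}\cdot_G{\mr b})={\mr d}\cdot_G{\mr b}$, about which you know nothing, so nothing collapses. (Two smaller slips along the way: the automorphism sending ${\mr b'}$ to ${\mr b}$ gives $f{\mr a}\cdot{\mr b}=f{\mr b}\equiv_G{\mr b}$, not literal equality; and your coheir-extension step puts the arguments on the wrong side of $\cnonfork_G$.) The set $E$ you mention at the end is the right object in spirit, but with literal equality ${\mr a}\cdot{\mr b}={\mr b}$ it is not $G$-invariant, and you neither verify its idempotency nor justify the assertion $\mrA\cdot_G\{{\mr b}\}\subseteq{\mr b}_G$ --- that inclusion is precisely the content of the step, not something one can simply state.

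The paper works at the orbit level throughout: set $\mrA'=\{{\mr a}\in\mrA:{\mr a}\cdot_G{\mr b}={\mr b}_G\}$. This is nonempty (because ${\mr b}\in\mrA=\mrA\cdot_G{\mr b}$), $G$-invariant and type-definable over $G$, and idempotent by the \textit{full} associativity of Corollary~\ref{corol_orbits_associative} (the one-sided inclusion of Proposition~\ref{prop_semi_associative} is not enough): for ${\mr a},{\mr a'}\in\mrA'$ one has $({\mr a}\cdot_G{\mr a'})\cdot_G{\mr b}={\mr a}\cdot_G({\mr a'}\cdot_G{\mr b})={\mr a}\cdot_G{\mr b}={\mr b}_G$. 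A second use of minimality gives $\mrA'=\mrA$, hence ${\mr b}\in\mrA'$, i.e.\ ${\mr b}_G$ is itself idempotent; a \textit{third} application of minimality then yields $\mrA={\mr b}_G$.
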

   \begin{proof}
   Fix arbitrarily some ${\mr b}\in\mrA$.
   By Corollary~\ref{corol_min_idempotent},
   the set $\mrA\cdot_G{\mr b}$ is contained in $\mrA$, idempotent and 
   type-definable over $G$ by Lemma~\ref{lem_typedef_Ab}.
   Therefore by minimality $\mrA\cdot_G{\mr b}=\mrA$.
   Let ${\mr\Aa'}\subseteq\mrA$ be the set of those ${\mr a}$ such that 
   ${\mr a}\cdot_G{\mr b}={\mr b}_G$.
   This set is non-empty because ${\mr b}\in\mrA\cdot_G{\mr b}$.
   It is easy to verify that ${\mr\Aa'}$ is type-definable over $G,{\mr b}$.
   As it is clearly invariant over $G$, it is type-definable over $G$.
   By associativity it is idempotent.
   Hence, by minimality, ${\mr\Aa'}=\mrA$.
   Then ${\mr b}\in{\mr\Aa'}$, which implies 
   ${\mr b}\cdot_G{\mr b}={\mr b}_G$.
   That is, ${\mr b}$ has idempotent orbit.
   Finally, by minimality, $\mrA={\mr b}_G$.
   \end{proof}

\begin{corollary}\label{corol_idempotent}
   Under the same assumptions of the lemma above, every 
   idempotent set that is  type-definable over $G$ contains 
   an element with an idempotent orbit.\QED
\end{corollary}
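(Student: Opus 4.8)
The plan is to produce, by Zorn's lemma, a minimal idempotent type-definable subset of the given set and then invoke Lemma~\ref{lem_Hindman}.

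Fix an idempotent set $\mrA$ that is type-definable over $G$, and let $\K$ be the collection of all idempotent subsets of $\mrA$ that are type-definable over $G$, ordered by reverse inclusion; it is non-empty because $\mrA\in\K$. I would first check that every chain $\{\mrA_i:i\in I\}$ in $\K$ has an upper bound in $\K$, namely its intersection $\mrB=\bigcap_{i\in I}\mrA_i$. Clearly $\mrB\subseteq\mrA$. It is type-definable over $G$: if $\pi_i({\bl x})\subseteq L(G)$ is a partial type defining $\mrA_i$, then $\bigcup_{i\in I}\pi_i({\bl x})$ defines $\mrB$. It is non-empty: any finite subset of $\bigcup_i\pi_i({\bl x})$ lies in a single $\pi_i({\bl x})$, which is consistent, so the whole union is consistent and, $G$ being small, it is realized in $\mrG$. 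Finally it is idempotent: if ${\mr a},{\mr b}\in\mrB$ and ${\mr a}\cnonfork_G{\mr b}$, then ${\mr a},{\mr b}\in\mrA_i$ for every $i$, hence ${\mr a}{\cdot}{\mr b}\in\mrA_i\cdot_G\mrA_i\subseteq\mrA_i$ for every $i$, and therefore ${\mr a}{\cdot}{\mr b}\in\mrB$. So $\mrB\in\K$, and by Zorn's lemma $\K$ has a minimal element $\mrB$.

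Next I would observe that such a minimal $\mrB$ is in fact minimal among \emph{all} idempotent sets that are type-definable over $G$: any idempotent type-definable set strictly contained in $\mrB$ would also be contained in $\mrA$, hence a member of $\K$ below $\mrB$, contradicting minimality. Thus Lemma~\ref{lem_Hindman} applies and gives $\mrB={\mr b}_G$ for some ${\mr b}\in\mrB$. Since $\mrB$ is idempotent, so is ${\mr b}_G$; that is, ${\mr b}$ has an idempotent orbit. As ${\mr b}\in\mrB\subseteq\mrA$, this is the element we were looking for.

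The only point requiring care is the closure of $\K$ under intersections of chains, and there the substantial ingredient is that a descending chain of non-empty type-definable sets over the small model $G$ has non-empty intersection — the usual compactness argument, which is exactly where working inside the saturated model $\mrG$ pays off. Everything else is bookkeeping.
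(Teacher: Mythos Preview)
Your argument is correct and is precisely the standard route the paper leaves implicit with its bare \QED: use compactness/Zorn to find a minimal idempotent type-definable subset of $\mrA$ and then apply Lemma~\ref{lem_Hindman}. One tiny phrasing point: a finite subset of $\bigcup_i\pi_i$ need not literally lie in a single $\pi_i$ for arbitrary choices of defining types, but it is satisfied in the smallest $\mrA_i$ among the finitely many indices involved, which is all you need for consistency---so the compactness step goes through as you intend.
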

%%%%%%%%%%%%%%%%%%%%%%%%%%%%%%%%%%%%%%%%%%%%%%%%%%%%%%%%%%%%%%%%%%%%%%%%%%%%
%%%%%%%%%%%%%%%%%%%%%%%%%%%%%%%%%%%%%%%%%%%%%%%%%%%%%%%%%%%%%%%%%%%%%%%%%%%%
%%%%%%%%%%%%%%%%%%%%%%%%%%%%%%%%%%%%%%%%%%%%%%%%%%%%%%%%%%%%%%%%%%%%%%%%%%%%
%%%%%%%%%%%%%%%%%%%%%%%%%%%%%%%%%%%%%%%%%%%%%%%%%%%%%%%%%%%%%%%%%%%%%%%%%%%%
%%%%%%%%%%%%%%%%%%%%%%%%%%%%%%%%%%%%%%%%%%%%%%%%%%%%%%%%%%%%%%%%%%%%%%%%%%%%
%%%%%%%%%%%%%%%%%%%%%%%%%%%%%%%%%%%%%%%%%%%%%%%%%%%%%%%%%%%%%%%%%%%%%%%%%%%%
%%%%%%%%%%%%%%%%%%%%%%%%%%%%%%%%%%%%%%%%%%%%%%%%%%%%%%%%%%%%%%%%%%%%%%%%%%%%
%%%%%%%%%%%%%%%%%%%%%%%%%%%%%%%%%%%%%%%%%%%%%%%%%%%%%%%%%%%%%%%%%%%%%%%%%%%%
\section{Hindman's theorem}\label{Hindman}

In this section we merge the theory of idempotents presented in
Section~\ref{semigroups} with the proof of Ramsey's theorem to obtain 
Hindman's theorem.

Let ${\bar a}$ be a tuple of elements of $G$ of length $\le\omega$.
In Section~\ref{intro}\@ we defined $\fp\,{\bar a}$ and the notions 
of \cpawdot-closed and $\cpaw$-covered.
The relation $\cpaw$ is introduced mainly for future reference.
The classical Hindman's theorem is obtained with the positive integers
(as an additive semigroup) for $G$ and $<$ for $\cpaw$.

\theoremstyle{mio}
\newtheorem{Hindman}[thm]{Hindman Theorem}
\begin{Hindman}\label{thm_Hindman}
   Let $\cpaw$ be a relation on $G$ that makes it \cpawdot-closed 
   and $\cpaw$-covered.
   Then for every finite coloring of $G$ there is 
   a $\cpaw$-chain $\bar a$ such that $\fp\,\bar a$ is monochromatic.
   If there is no $g\in G$ such that $G\cpaw g$, we may further assume that the elements of the $\cpaw$-chain are all distinct.
\end{Hindman}

\begin{proof}
   We interpret $G$ as a structure in a language that extends the language of 
   semigroups with a symbol for $\cpaw$ and one for each subset of $G$.
   Let $\mrG$ be a saturated elementary superstucture of $G$.
   As observed in Remark~\ref{rk_coheir_stationary}, the language makes 
   $\cnonfork_G$ trivially $1$-stationary.

   We write ${\mr\G'}$ for the type-definable set $\{{\mr g} : G\cpaw{\mr g}\}$,
   which is non-empty because $G$ is $\cpaw$-covered.
   We claim that ${\mr\G'}$ is idempotent.
   In fact, if ${\mr a},{\mr b}\in{\mr\G'}$ then, as $G\cpaw{\mr a},{\mr b}$ 
   and ${\mr a}\cnonfork_G{\mr b}$, we must have that ${\mr a}\cpaw{\mr b}$.
   Therefore, from the \cpawdot-closedness of $G$ we infer 
   ${\mr a}{\cdot}{\mr b}\in{\mr\G'}$.

   Let ${\mr g_0}$ be an element of ${\mr\G'}$ with idempotent orbit as given by Corollary~\ref{corol_idempotent}.
   We can assume that ${\mr g_0}\notin G$ otherwise the sequence that is identically ${\mr g_0}$ trivially proves the theorem.
   If we want the elements of the chain $\bar{a}$ to be distinct it suffices require that ${\mr g_0}\notin G$. By definition of ${\mr g_0}$, this can be directly assumed when there is no $g\in G$ such that $G\cpaw g$.
   Let $p({\bl x})\in S(\mrG)$ be a global coheir of $\tp({\mr g_0}/G)$.
   Let ${\mr\bar g}$ be a coheir sequence of $p({\bl x})$, that is

   \ceq{\hfill {\mr g_i}}
   {\models}
   {p_{\restriction G,\,{\mr g_{\restriction i} }  }({\bl x}).}

   We write ${\mr \cev{g}_{\restriction i}}$ for the tuple 
   ${\mr g_{i-1}},\dots,{\mr g_{0}}$.
   By the idempotency of $({\mr g_0})_G$ and Proposition~\ref{prop_orbits_main},
   ${\mr h}\equiv_G{\mr g_0}$ for all 
   ${\mr h}\in \fp\,{\mr \cev{g}_{\restriction i}}$ and all $i$.
   It follows in particular that $\fp\,{\mr \cev{g}_{\restriction i}}$ is 
   monochromatic, say all its elements have color $1$.
   Now, we use the sequence ${\mr\bar g}$ to define $\bar a\in G^\omega$
   such that all elements of $\fp\,\bar a$ have color $1$.

   Assume as induction hypothesis that $\fp(a_{\restriction i},\,{\mr g_0})$
   is monochromatic of color $1$.
   Our goal is to find $a_i$ such that the same property holds for 
   $\fp(a_{\restriction i+1},\,{\mr g_0})$.

   First we claim that from the induction hypothesis it follows that, 
   for all $j$, all elements of 
   $\fp(a_{\restriction i},\,{\mr\cev{g}_{\restriction j}})$ have color $1$.
   In fact, the elements of 
   $\fp(a_{\restriction i},\,{\mr\cev{g}_{\restriction j}})$ have the form 
   $b\cdot{\mr h}$ for some $b\in\fp(a_{\restriction i})$ and 
   ${\mr h}\in\fp({\mr\cev{g}_{\restriction j}})$.
   As ${\mr h}\equiv_G{\mr g_0}$, we conclude that 
   $b\cdot{\mr h}\equiv_Gb\cdot {\mr g_0}$, which proves the claim.

   Let $\phi(a_{\restriction i},\,{\mr g_{i+1}},\,{\mr g_{\restriction i+1}})$ 
   say that all elements of 
   $\fp(a_{\restriction i},\,{\mr\cev{g}_{\restriction i+2}})$ have color $1$.
   As ${\mr\bar g}$ is a coheir sequence we can find  $a_i$ such that 
   $\phi(a_{\restriction i},\,a_i,\,{\mr g_{\restriction i+1}})$.
   Hence all elements of 
   $\fp(a_{\restriction i+1},\,{\mr\cev{g}_{\restriction i+1}})$ 
   have color $1$.
   Therefore $a_i$ is as required.
\end{proof}

Hindman's theorem generalizes to a proposition that subsumes Ramsey's theorem.
It is usually referred to as the Milliken–Taylor 
theorem~\cite{Milliken} and~\cite{Taylor}.
By the following observation, we may use virtually the same proof.

\begin{proposition}\label{prop_Hindman2}
   Assume $\cnonfork_G$ is $1$-stationary.
   Let ${\mr\bar g}\in\mrG^{\omega}$ be a coheir sequence 
   of some global coheir of $\tp({\mr g}/G)$ where ${\mr g}$ has idempotent orbit.
   Let ${\mr\bar h}\in\mrG^{\omega}$ be such that 
   ${\mr h_i}\in\fp({\mr\cev{g}_{\restriction I_i}})$ for some finite non-empty
   $I_i\subseteq\omega$ such that  $I_i<I_{i+1}$.
   Then ${\mr\bar h}\equiv_G{\mr\bar g}$.
\end{proposition}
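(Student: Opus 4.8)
The plan is to show that $\bar{\mr h}$ is itself a coheir sequence of a global coheir of $\tp({\mr g}/G)$, and then invoke the fact that any two coheir sequences of coheirs of the same type over $M$ are $M$-indiscernible, hence $G$-equivalent as tuples. By Lemma~\ref{lem_coheir_property}, to see that $\bar{\mr h}$ is a coheir sequence it suffices to check two things for every $n$: first, the independence condition ${\mr h_n}\cnonfork_G{\mr h_{\restriction n}}$, and second, the ``same type'' condition ${\mr h_{n+1}}\equiv_{G,\,{\mr h_{\restriction n}}}{\mr h_n}$.

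\textbf{First I would} establish the independence ${\mr h_n}\cnonfork_G{\mr h_{\restriction n}}$. Since ${\mr h_n}\in\fp({\mr\cev g_{\restriction I_n}})$ and each ${\mr h_j}$ for $j<n$ lies in $\fp({\mr\cev g_{\restriction I_j}})$ with $I_j < I_n$, the tuple ${\mr h_{\restriction n}}$ is built from entries of ${\mr g}$ with indices strictly below $\min I_n$, while ${\mr h_n}$ is a product of entries with indices in $I_n$. Writing $m=\min I_n$, one shows by the extension and transitivity clauses of Lemma~\ref{lem_coheir_independence} (applied along the coheir sequence ${\mr\bar g}$, exactly as in Proposition~\ref{prop_semi_associative}) that ${\mr g_m}\cdot(\text{later factors})\cnonfork_G{\mr g_{\restriction m}}$, and since ${\mr h_{\restriction n}}$ is $G$-definable from ${\mr g_{\restriction m}}$ while ${\mr h_n}$ is $G$-definable from the higher-index block, finite character and invariance give ${\mr h_n}\cnonfork_G{\mr h_{\restriction n}}$.

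\textbf{The main obstacle} is the ``same type'' condition ${\mr h_{n+1}}\equiv_{G,\,{\mr h_{\restriction n}}}{\mr h_n}$, which is where $1$-stationarity and idempotency enter, mirroring the key step in the proof of Theorem~\ref{thm_Hindman}. The point is that, by the idempotency of ${\mr g}_G$ and Proposition~\ref{prop_orbits_main}, every element of $\fp({\mr\cev g_{\restriction I})}$ for a finite $I$ lies in ${\mr g}_G$, so each ${\mr h_i}$ individually satisfies ${\mr h_i}\equiv_G{\mr g}$; but we need this relative to the parameters ${\mr h_{\restriction n}}$. For this I would argue inductively: treating ${\mr g}_G\cdot_G{\mr h_{\restriction n}}$ as the relevant orbit, Proposition~\ref{prop_orbits_main} (which requires $1$-stationarity) shows that the product of ${\mr h_n}$ — viewed as built from a block of ${\mr g}$'s lying above all indices used in ${\mr h_{\restriction n}}$ — with the fixed tuple ${\mr h_{\restriction n}}$ has a well-defined $G$-type; combined with the observation from the proof of Hindman's theorem that $\fp$ applied to a ``reversed tail'' of a coheir sequence stays in a single orbit over $G$, one gets that ${\mr h_n}$ and ${\mr h_{n+1}}$ realize the same type over $G,{\mr h_{\restriction n}}$.

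\textbf{Finally}, having verified both clauses of Lemma~\ref{lem_coheir_property}, $\bar{\mr h}$ is a coheir sequence of some global coheir of $\tp({\mr g}/G)$; I would then note (using that ${\mr\bar g}$ is likewise such a coheir sequence, and that finite subtuples of coheir sequences of coheirs of a fixed complete type over a model realize a single type — this follows from the $1$-stationarity assumption, so the coheir is in fact unique) that all finite initial segments of $\bar{\mr h}$ and of ${\mr\bar g}$ realize the same type over $G$, and conclude ${\mr\bar h}\equiv_G{\mr\bar g}$ by finite character of equivalence of tuples. I expect the bookkeeping of which indices of ${\mr g}$ are ``used'' by which ${\mr h_i}$, and the careful application of extension/transitivity to split off the top block, to be the fiddly part; the conceptual content is entirely contained in Proposition~\ref{prop_orbits_main} and Lemma~\ref{lem_coheir_property}.
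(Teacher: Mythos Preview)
Your plan is correct and leads to a valid proof, but it is more circuitous than the paper's three-line argument. The paper does not separately verify the two clauses of Lemma~\ref{lem_coheir_property} for ${\mr\bar h}$; instead it compares each ${\mr h_i}$ directly to ${\mr g_{n_i}}$ (where $n_i=\min I_i$) over the parameters ${\mr g_{\restriction n_i}}$. Since both ${\mr h_i}$ and ${\mr g_{n_i}}$ realize the type ${\mr g}\equiv_G{\bl x}\cnonfork_G{\mr g_{\restriction n_i}}$, and this type is complete by $1$-stationarity, one obtains ${\mr h_i}\equiv_{G,\,{\mr g_{\restriction n_i}}}{\mr g_{n_i}}$ in one stroke. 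From this your two clauses (and hence the conclusion via uniqueness of the coheir) follow immediately, because ${\mr h_{\restriction i}}$ is term-definable from ${\mr g_{\restriction n_i}}$. So the paper's route subsumes yours but avoids the separate bookkeeping for independence and for the ``same type'' condition.

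One point to clean up: in your treatment of the ``same type'' condition you invoke Proposition~\ref{prop_orbits_main} and speak of ``${\mr g}_G\cdot_G{\mr h_{\restriction n}}$'', but ${\mr h_{\restriction n}}$ is a tuple, not a semigroup element, so that proposition does not literally apply. What you actually need is just the definition of $1$-stationarity: both ${\mr h_n}$ and ${\mr h_{n+1}}$ lie in ${\mr g}_G$ (by idempotency, exactly as in the proof of Theorem~\ref{thm_Hindman}) and both satisfy ${\bl x}\cnonfork_G{\mr h_{\restriction n}}$, so stationarity forces ${\mr h_n}\equiv_{G,\,{\mr h_{\restriction n}}}{\mr h_{n+1}}$. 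No product of orbits is involved. Once you make that substitution, your argument is complete and equivalent to the paper's.
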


\begin{proof}
   Write $n_i$ for the minimum of $I_i$.
   It suffices to prove that 
   ${\mr h_i}\equiv_{G,{\mr g_{\restriction n_i}}}{\mr g_{n_i}}$.
   Note that the type 
   ${\mr g}\equiv_G{\bl x}\cnonfork_G{\mr g_{\restriction n_i}}$
   is satisfied both by ${\mr h_i}$ and ${\mr g_{n_i}}$,
   hence the claim follows by stationarity.
\end{proof}

Write \emph{$\fp(\bar a)_n$\/} for the $n$-uniform 
hypergraph with vertex set $\fp(\bar a)$ and as edges those sets 
$\{h_1,\dots,h_n\}$ such that $h_i\in\fp(a_{\restriction I_i})$ for some
finite sets $I_1<\dots<I_n$.
\theoremstyle{mio}
\newtheorem{MillikenTaylor}[thm]{Milliken-Taylor Theorem}
\begin{MillikenTaylor}\label{thm_MillikenTaylor}
   Let $\cpaw$ be a relation on $G$ that makes it \cpawdot-closed 
   and $\cpaw$-covered.
   Then for every positive integer $n$ and every finite coloring of the
   complete $n$-uniform hypergraph with vertex set $G$ there is a 
   $\cpaw$-chain $\bar a$ such that $\fp(\bar a)_n$ is monochromatic.\QED
\end{MillikenTaylor}

\begin{proof}
   Given a coheir sequence ${\mr\bar g}$ as in the proof of 
   Theorem~\ref{thm_Hindman} we want to define 
   $\bar a\in G^\omega$ such that $\fp(\bar a)_n$ is monochromatic.
   By the proposition above, $\fp({\mr\cev{g}_{\restriction i}})_n$
   is monochromatic for every $i\ge n$.
   As in the proof of Theorem~\ref{thm_Hindman}, we define by induction $\bar a\in G^\omega$ in such a way that
    $\fp(a_{\restriction i},\,{\mr\cev{g}_{\restriction n}})_n$
   is a finite monochromatic subgraph of $G$.
\end{proof}
%%%%%%%%%%%%%%%%%%%%%%%%%%%%%%%%%%%%%%%%%%%%%%%%%%%%%%%%%%%%%%%%%%%%%%%%%%%%
%%%%%%%%%%%%%%%%%%%%%%%%%%%%%%%%%%%%%%%%%%%%%%%%%%%%%%%%%%%%%%%%%%%%%%%%%%%%
%%%%%%%%%%%%%%%%%%%%%%%%%%%%%%%%%%%%%%%%%%%%%%%%%%%%%%%%%%%%%%%%%%%%%%%%%%%%
%%%%%%%%%%%%%%%%%%%%%%%%%%%%%%%%%%%%%%%%%%%%%%%%%%%%%%%%%%%%%%%%%%%%%%%%%%%%
%%%%%%%%%%%%%%%%%%%%%%%%%%%%%%%%%%%%%%%%%%%%%%%%%%%%%%%%%%%%%%%%%%%%%%%%%%%%
%%%%%%%%%%%%%%%%%%%%%%%%%%%%%%%%%%%%%%%%%%%%%%%%%%%%%%%%%%%%%%%%%%%%%%%%%%%%
%%%%%%%%%%%%%%%%%%%%%%%%%%%%%%%%%%%%%%%%%%%%%%%%%%%%%%%%%%%%%%%%%%%%%%%%%%%%
%%%%%%%%%%%%%%%%%%%%%%%%%%%%%%%%%%%%%%%%%%%%%%%%%%%%%%%%%%%%%%%%%%%%%%%%%%%%
\section{The Hales-Jewett theorem}\label{HJ}

\def\medrel#1{\parbox[t]{6ex}{$\displaystyle\hfil #1$}}
\def\ceq#1#2#3{\parbox[t]{39ex}{$\displaystyle #1$}
               \medrel{#2}
               {$\displaystyle #3$}}

The Hales-Jewett theorem is a purely combinatorial statement 
that implies the van der Waerden theorem.
The original proof by Alfred Hales and Robert Jewett is 
combinatorial~\cite{HJ}.
An alternative proof, also combinatorial, is due by 
Saharon Shelah~\cite{Shelah}.
Our proof is similar to the proof by Andreas Blass in~\cite{Blass} 
(based on ideas from~\cite{BBH}), but we use saturated models where 
he uses Stone-\v{C}ech compactification.
We present three versions of the main theorem.

First we prove an abstract algebraic version due to 
Sabine Koppelberg~\cite{Koppelberg} which is easier to state and to prove 
(this version comes in two variants).
The classical version follows easily from the algebraic one.

We work with the same notation as in Section~\ref{semigroups}.
We say that an element ${\mr c}$ is \emph{left-minimal\/} (w.r.t.\@ $\mrA$)
if ${\mr c}\in\mrA\cdot_G{\mr g}$ for every ${\mr g}\in\mrA\cdot_G{\mr c}$.

\begin{proposition}\label{prop_minimal_existence1}
   Assume $\cnonfork_G$ is $1$-stationary.
   Let $\mrA$ be idempotent and type-definable over $G$.
   Then $\mrA$ contains a left-minimal element 
   ${\mr c}$ with idempotent orbit.
\end{proposition}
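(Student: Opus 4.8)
The plan is to first extract a minimal type-definable idempotent subset $\mrB\subseteq\mrA$, and then mine it for the element we want. Since $\mrA$ is idempotent and type-definable over $G$, and since $\mrG$ is saturated, the family of non-empty type-definable (over $G$) idempotent subsets of $\mrA$ is closed under intersections of chains: a decreasing intersection of type-definable sets remains type-definable, and it remains idempotent and non-empty by saturation (compactness) applied to the defining types together with the idempotency conditions. So Zorn's lemma gives a $\subseteq$-minimal such $\mrB$. By Lemma~\ref{lem_Hindman}, $\mrB={\mr b}_G$ for some (any) ${\mr b}\in\mrB$, and this ${\mr b}$ has idempotent orbit, ${\mr b}\cdot_G{\mr b}={\mr b}_G$.

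Next I would check that every element of $\mrB$ is automatically left-minimal with respect to $\mrA$. Fix ${\mr c}\in\mrB$ and ${\mr g}\in\mrA\cdot_G{\mr c}$; I must show ${\mr c}\in\mrA\cdot_G{\mr g}$. The key point is that $\mrA\cdot_G{\mr c}$ is type-definable over $G$ (Lemma~\ref{lem_typedef_Ab}), invariant over $G$, contained in $\mrA$ (since $\mrA$ is idempotent and ${\mr c}\in\mrA$), and idempotent: indeed $(\mrA\cdot_G{\mr c})\cdot_G(\mrA\cdot_G{\mr c})\subseteq\mrA\cdot_G(\mrA\cdot_G{\mr c})=(\mrA\cdot_G\mrA)\cdot_G{\mr c}\subseteq\mrA\cdot_G{\mr c}$ using associativity (Corollary~\ref{corol_orbits_associative}) and idempotency of $\mrA$. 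Hmm — but $\mrA\cdot_G{\mr c}$ need not be contained in $\mrB$, so minimality of $\mrB$ does not apply directly. The fix is to intersect: consider instead $\mrB\cdot_G{\mr c}$ for ${\mr c}\in\mrB$; this is type-definable over $G$, invariant, contained in $\mrB$ (since $\mrB$ idempotent, ${\mr c}\in\mrB$), and idempotent by the same associativity computation as in Corollary~\ref{corol_min_idempotent}. By minimality of $\mrB$, $\mrB\cdot_G{\mr c}=\mrB$ for every ${\mr c}\in\mrB$. Now take ${\mr g}\in\mrB\cdot_G{\mr c}$; then ${\mr g}\in\mrB$, so $\mrB\cdot_G{\mr g}=\mrB\ni{\mr c}$, which after passing to orbits and using Proposition~\ref{prop_orbits_main} yields ${\mr c}\in\mrB\cdot_G{\mr g}\subseteq\mrA\cdot_G{\mr g}$. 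Thus every ${\mr c}\in\mrB$ is left-minimal w.r.t.\@ $\mrB$, hence — since $\mrB\subseteq\mrA$ forces $\mrB\cdot_G{\mr g}\subseteq\mrA\cdot_G{\mr g}$ — left-minimal w.r.t.\@ $\mrA$, provided the relevant ${\mr g}$ in $\mrA\cdot_G{\mr c}$ can be reduced to one lying in $\mrB\cdot_G{\mr c}$.

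The main obstacle I anticipate is exactly this last reduction: the definition of left-minimal quantifies over all ${\mr g}\in\mrA\cdot_G{\mr c}$, whereas the minimality argument naturally controls only $\mrB\cdot_G{\mr c}=\mrB$. To bridge this I would argue that for ${\mr c}\in\mrB$ one in fact has $\mrA\cdot_G{\mr c}=\mrB$: the inclusion $\supseteq$ is $\mrB=\mrB\cdot_G{\mr c}\subseteq\mrA\cdot_G{\mr c}$, and for $\subseteq$ note $\mrA\cdot_G{\mr c}$ is type-definable over $G$, invariant, contained in $\mrA$, and idempotent, and moreover $\mrB\cdot_G(\mrA\cdot_G{\mr c})=(\mrB\cdot_G\mrA)\cdot_G{\mr c}\subseteq\mrA\cdot_G{\mr c}$; combining $\mrA\cdot_G{\mr c}\subseteq\mrA$ with $\mrB$-minimality applied to $\mrB\cap(\mrA\cdot_G{\mr c})$ — which is type-definable, invariant, idempotent (as $\mrB\cdot_G(\mrA\cdot_G{\mr c})$ lands in it) and non-empty (it contains $\mrB\cdot_G{\mr c}=\mrB$, wait that gives it equals $\mrB$) — forces $\mrB\subseteq\mrA\cdot_G{\mr c}$, and conversely a symmetric squeeze using that $\mrA\cdot_G{\mr c}$ is itself a candidate shows minimality can only be attained inside $\mrB$ once we further intersect with $\mrB$; the clean statement is $\mrA\cdot_G{\mr c}\cap\mrB=\mrB$ and, since $\mrA\cdot_G{\mr c}$ is idempotent and we may pick inside it a minimal type-definable idempotent which by Lemma~\ref{lem_Hindman} is a single orbit, one checks that minimal orbit must be ${\mr c}_G\subseteq\mrB$, giving ${\mr c}$ left-minimal and idempotent as a member of a minimal $\mrB$. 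I would present the orbit-level computation carefully, since that is where the interaction of associativity, $1$-stationarity (needed for Proposition~\ref{prop_orbits_main} and hence Corollary~\ref{corol_orbits_associative}), and minimality has to be handled without circularity.
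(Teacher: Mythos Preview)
Your Zorn step does produce a minimal type-definable idempotent $\mrB\subseteq\mrA$, and by Lemma~\ref{lem_Hindman} this $\mrB$ is a single orbit ${\mr c}_G$ with ${\mr c}$ idempotent. The problem is that \textit{every} idempotent orbit is already minimal among type-definable idempotent sets (an orbit cannot contain a proper non-empty $G$-invariant subset), so Zorn hands you an arbitrary idempotent orbit inside $\mrA$, with no control over which one. In the ultrafilter picture this is the familiar fact that $\beta S$ contains idempotents that are not minimal idempotents; for such a ${\mr c}$ the set $\mrA\cdot_G{\mr c}$ is strictly larger than ${\mr c}_G$, and there will be ${\mr g}\in\mrA\cdot_G{\mr c}$ with ${\mr c}\notin\mrA\cdot_G{\mr g}$. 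Your attempted reductions in the last paragraph all run into this: you can show $\mrB\subseteq\mrA\cdot_G{\mr c}$, but you have no leverage for the reverse inclusion, and the ``minimal orbit inside $\mrA\cdot_G{\mr c}$'' that you locate at the end has no reason to be ${\mr c}_G$ rather than some other idempotent orbit.

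The paper avoids this by descending along a different family: instead of arbitrary idempotent type-definable subsets, it iterates sets of the form $\mrA\cdot_G{\mr b}$ (the analogues of left ideals), setting $\mrB_0=\mrA$, $\mrB_{\alpha+1}=\mrA\cdot_G{\mr b}_\alpha$ for some ${\mr b}_\alpha\in\mrB_\alpha$, and intersecting at limits. Each $\mrB_{\alpha+1}\subseteq\mrB_\alpha$ by idempotency of $\mrA$ and Proposition~\ref{prop_semi_associative}, and each is idempotent and type-definable by Lemma~\ref{lem_typedef_Ab} and Corollary~\ref{corol_min_idempotent}. When the chain stabilizes one has $\mrA\cdot_G{\mr c}=\mrB_\alpha=\mrA\cdot_G{\mr g}$ for \textit{all} ${\mr c},{\mr g}\in\mrB_\alpha$, which is exactly left-minimality. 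Only then does one invoke Corollary~\ref{corol_idempotent} to extract an idempotent orbit inside $\mrB_\alpha$. The moral: minimize over left ideals first, find the idempotent second --- not the other way around.
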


\begin{proof} Construct by induction a chain of type-definable idempotent sets $\mrB_\alpha\subseteq \mrA$ and elements ${\mr b}_\alpha\in \mrB_\alpha $ such that $\mrB_{0}=\mrA$ and $\mrB_{\alpha+1}=\mrA\cdot_G {\mr b}_\alpha$. For $\alpha$ limit take the intersection.  By idempotency of $\mrA$, it is straightforward to check that  $\mrB_{\alpha+1}\subseteq\mrB_\alpha$. The sets  $\mrB_\alpha$ are type-definable and idempotent by \ref{lem_typedef_Ab} and \ref{corol_min_idempotent}. For $\alpha$ limit $\mrB_\alpha$ is non-empty by compactness, as it is intersection of a chain of closed sets. 
	
	For some $\alpha$ we cannot properly extend this construction. For this $\alpha$, for every ${\mr c},{\mr g} \in \mrB_\alpha$ we have
	$\mrA\cdot_G {\mr c} =\mrB_\alpha=\mrA\cdot_G {\mr g} $ .
	Hence every $ {\mr c} \in\mrB_\alpha $ is left-minimal.
	As $\mrB_\alpha$ is idempotent, by Corollary~\ref{corol_idempotent}
	there is some $ {\mr c} \in\mrB_\alpha$ with idempotent orbit.
\end{proof}

\begin{proposition}\label{prop_minimal_existence2}
   Assume $\cnonfork_G$ is $1$-stationary.
   Let $\mrA$ be idempotent and type-definable over $G$.
   Let ${\mr c}_G$ be idempotent and such that 
   ${\mr c}\cdot_G\mrA,\ \mrA\cdot_G{\mr c}\subseteq\mrA$.
   Then
   \begin{itemize}
   \item[1.] ${\mr c}\cdot_G\mrA\cdot_G{\mr c}$ contains some ${\mr g}$
             with idempotent orbit; 
   \item[2.] if moreover ${\mr c}$ is left-minimal, then 
             ${\mr c}\equiv_G{\mr g}$ for every ${\mr g}$ as in \ssf{1}.
   \end{itemize} 
\end{proposition}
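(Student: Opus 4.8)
The plan is to prove the two statements in turn, using without further comment that $1$-stationarity makes $\cdot_G$ associative on $G$-invariant sets (Corollary~\ref{corol_orbits_associative}) and that $({\mr a}{\cdot}{\mr b})_G={\mr a}\cdot_G{\mr b}$ whenever ${\mr a}\cnonfork_G{\mr b}$ (Proposition~\ref{prop_orbits_main}); in particular ${\mr a}\cdot_G{\mr b}$ is always a single orbit. For \textbf{(1)} I would run the classical ``$eTe$ contains an idempotent'' argument inside the type-definable idempotent set $\mrA\cdot_G{\mr c}$, and for \textbf{(2)} I would use left-minimality of ${\mr c}$ to force the idempotent orbit produced in \textbf{(1)} to be ${\mr c}_G$ itself.

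\textbf{Part (1).} Set ${\mr B}:=\mrA\cdot_G{\mr c}$. It is type-definable over $G$ by Lemma~\ref{lem_typedef_Ab}, contained in $\mrA$ by hypothesis, and idempotent: by associativity together with ${\mr c}\cdot_G\mrA\subseteq\mrA$ and $\mrA\cdot_G\mrA\subseteq\mrA$,
\[{\mr B}\cdot_G{\mr B}\;=\;\mrA\cdot_G\big({\mr c}\cdot_G\mrA\big)\cdot_G{\mr c}\;\subseteq\;\mrA\cdot_G\mrA\cdot_G{\mr c}\;\subseteq\;{\mr B}.\]
By Corollary~\ref{corol_idempotent} there is ${\mr f}\in{\mr B}$ with idempotent orbit. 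Let ${\mr g}$ be an element of the orbit ${\mr c}\cdot_G{\mr f}$. Since ${\mr f}\in{\mr B}=\mrA\cdot_G{\mr c}$ we get ${\mr g}_G={\mr c}\cdot_G{\mr f}\subseteq{\mr c}\cdot_G\mrA\cdot_G{\mr c}$, so ${\mr g}$ lies in the required set; it remains to check that ${\mr g}_G$ is idempotent. The crucial identity is ${\mr f}\cdot_G{\mr c}={\mr f}_G$: writing ${\mr f}={\mr a}{\cdot}{\mr c}_0$ with ${\mr a}\in\mrA$, ${\mr c}_0\equiv_G{\mr c}$ and ${\mr a}\cnonfork_G{\mr c}_0$, and taking the right-hand factor ${\mr c}_1$ of the product ${\mr f}\cdot_G{\mr c}$ to be coheir-independent over $G$ from the whole witness $({\mr a},{\mr c}_0)$ rather than merely from ${\mr f}$, idempotency of ${\mr c}_G$ gives ${\mr c}_0{\cdot}{\mr c}_1\equiv_G{\mr c}_0$, whence ${\mr f}{\cdot}{\mr c}_1={\mr a}{\cdot}{\mr c}_0{\cdot}{\mr c}_1\equiv_G{\mr a}{\cdot}{\mr c}_0={\mr f}$. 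Granting this,
\[{\mr g}\cdot_G{\mr g}\;=\;{\mr c}\cdot_G\big({\mr f}\cdot_G{\mr c}\big)\cdot_G{\mr f}\;=\;{\mr c}\cdot_G{\mr f}\cdot_G{\mr f}\;=\;{\mr c}\cdot_G{\mr f}\;=\;{\mr g},\]
using associativity, the identity ${\mr f}\cdot_G{\mr c}={\mr f}_G$, and idempotency of ${\mr f}_G$. Thus ${\mr g}_G$ is idempotent.

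\textbf{Part (2).} Let ${\mr g}$ be any element with idempotent orbit lying in ${\mr c}\cdot_G\mrA\cdot_G{\mr c}$. From ${\mr c}\cdot_G\mrA\subseteq\mrA$ we get ${\mr g}\in\big({\mr c}\cdot_G\mrA\big)\cdot_G{\mr c}\subseteq\mrA\cdot_G{\mr c}$, so left-minimality of ${\mr c}$ yields ${\mr c}\in\mrA\cdot_G{\mr g}$. Arguing exactly as in Part (1) — write ${\mr c}={\mr a}{\cdot}{\mr g}_0$ with ${\mr a}\in\mrA$, ${\mr g}_0\equiv_G{\mr g}$, choose the right factor of ${\mr c}\cdot_G{\mr g}$ independent from the witness $({\mr a},{\mr g}_0)$, and use that ${\mr g}_G$ is idempotent — we obtain ${\mr c}\cdot_G{\mr g}={\mr c}_G$. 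On the other hand, ${\mr g}\in{\mr c}\cdot_G\mrA\cdot_G{\mr c}$ together with idempotency of ${\mr c}_G$ lets one absorb an extra leftmost copy of ${\mr c}$, giving ${\mr g}\in{\mr c}\cdot_G{\mr g}$, hence ${\mr g}_G\subseteq{\mr c}\cdot_G{\mr g}$ by $G$-invariance. Therefore ${\mr g}_G\subseteq{\mr c}\cdot_G{\mr g}={\mr c}_G$; an inclusion of orbits is an equality, so ${\mr g}\equiv_G{\mr c}$.

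\textbf{The main obstacle.} In both parts the algebra of $\cdot_G$ is routine; the real work is the bookkeeping of coheir-independence, since $\cnonfork_G$ is not symmetric and the independences one naively reads off the hypotheses point the wrong way. I would handle this in the style of the proof of Theorem~\ref{thm_Hindman}: fix a global coheir of $\tp({\mr c}/G)$, realize a coheir sequence of it, and draw each auxiliary copy of ${\mr c}$ from sufficiently far along the sequence, using the coheir-extension clause of Lemma~\ref{lem_coheir_independence} to relocate the witnesses $({\mr a},{\mr c}_0)$ and $({\mr a},{\mr g}_0)$ so that every independence invoked above — and the fact that every ``reverse product'' of copies of ${\mr c}$ stays inside ${\mr c}_G$, as in the use of Proposition~\ref{prop_orbits_main} in that proof — becomes automatic. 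Making this precise, rather than the identities ${\mr f}\cdot_G{\mr c}={\mr f}_G$ and ${\mr c}\cdot_G{\mr g}={\mr c}_G$ themselves, is the delicate point.
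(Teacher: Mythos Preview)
Your argument is correct and follows the same route as the paper: show $\mrA\cdot_G{\mr c}$ is type-definable and idempotent, extract ${\mr f}$ with idempotent orbit there, then any ${\mr g}\in{\mr c}\cdot_G{\mr f}$ works; for \ssf{2}, combine left-minimality with the idempotency of ${\mr c}_G$ and ${\mr g}_G$. The one point worth flagging is that your ``main obstacle'' is not actually there: the paper stays entirely at the orbit level, so the identities ${\mr f}\cdot_G{\mr c}={\mr f}_G$ and ${\mr c}\cdot_G{\mr g}={\mr c}_G$ are one-line consequences of associativity and idempotency --- e.g.\ ${\mr f}_G={\mr a}\cdot_G{\mr c}$ for some ${\mr a}\in\mrA$, hence ${\mr f}\cdot_G{\mr c}={\mr a}\cdot_G{\mr c}\cdot_G{\mr c}={\mr a}\cdot_G{\mr c}={\mr f}_G$ --- with no witness-chasing or coheir sequences required.
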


Note, parenthetically, that the set in \ssf{1} may not be type-definable, 
therefore Corollary~\ref{corol_idempotent} does not apply 
directly and we need an indirect argument.

\begin{proof}
   \ssf{1.} \ From ${\mr c}\cdot_G\mrA\subseteq\mrA$ we obtain that 
   $\mrA\cdot_G{\mr c}$ is idempotent.
   As it is also type-definable, $\mrA\cdot_G{\mr c}$ contains 
   a ${\mr b}$ with idempotent orbit by Corollary~\ref{corol_idempotent}.
   There is an ${\mr a}\in\mrA$ such that ${\mr b}_G={\mr a}\cdot_G{\mr c}$, then ${\mr b}\cdot_G{\mr c}={\mr b}_G$.
   From this we obtain that 
   ${\mr c}\cdot_G{\mr b}$ is idempotent and contained in 
   ${\mr c}\cdot_G\mrA\cdot_G{\mr c}$.

   \ssf{2.} \ From ${\mr g}\in{\mr c}\cdot_G\mrA\cdot_G{\mr c}$ and the 
   idempotency of ${\mr c}_G$ we obtain ${\mr g}_G={\mr c}\cdot_G{\mr g}$.
   As ${\mr g}\in\mrA\cdot_G{\mr c}$, from the left-minimality of ${\mr c}_G$ 
   we obtain ${\mr c}\in\mrA\cdot_G{\mr g}$.
   Hence ${\mr c}_G={\mr c}\cdot_G{\mr g}$, by the idempotency of ${\mr g}_G$.
   Therefore ${\mr c}_G={\mr g}_G$, which proves \ssf{2}.
\end{proof}

The following is a technical lemma that is required in many proofs below.

\begin{proposition}\label{prop_HJ_tecnical}
   Assume $\cnonfork_G$ is $1$-stationary.
   Let $\sigma:\mrG\to\mrG$ be a semigroup homomorphism definable over $G$.
   Then for every ${\mr a}, {\mr b}\in\mrG$
   \begin{itemize}
   \item[1.] $\sigma\big[{\mr a}_G\big]
             \ =\ 
             (\sigma\,{\mr a})_G$

   \item[2.] $\sigma\big[{\mr a}\cdot_G{\mr b}\big]
             \ =\ 
             \sigma\,{\mr a}\cdot_G\sigma\,{\mr b}$.
   \end{itemize}
\end{proposition}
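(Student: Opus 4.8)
The plan is to prove part~1 by a symmetry/invariance argument relating $\Aut(\mrG/G)$ to $\sigma$, and then to derive part~2 from part~1 together with the fact that $\sigma$ is a homomorphism definable over $G$, so it is compatible with the coheir-heir relation $\cnonfork_G$.

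For part~1, recall ${\mr a}_G=\O({\mr a}/G)$. The inclusion $\subseteq$ is immediate: if ${\mr a'}\equiv_G{\mr a}$, pick $f\in\Aut(\mrG/G)$ with $f{\mr a}={\mr a'}$; since $\sigma$ is definable over $G$, the formula defining its graph is fixed by $f$, so $f$ commutes with $\sigma$, i.e.\@ $\sigma\,{\mr a'}=\sigma(f{\mr a})=f(\sigma\,{\mr a})\equiv_G\sigma\,{\mr a}$. For $\supseteq$ the same computation runs backwards: given ${\mr d}\equiv_G\sigma\,{\mr a}$, choose $f\in\Aut(\mrG/G)$ with $f(\sigma\,{\mr a})={\mr d}$; then $f{\mr a}\equiv_G{\mr a}$ and $\sigma(f{\mr a})=f(\sigma\,{\mr a})={\mr d}$, so ${\mr d}\in\sigma[{\mr a}_G]$. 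Thus $\sigma[{\mr a}_G]=(\sigma\,{\mr a})_G$.

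For part~2, I would first reduce to the key point that $\sigma$ \emph{preserves} $\cnonfork_G$: if ${\mr a}\cnonfork_G{\mr b}$ then $\sigma\,{\mr a}\cnonfork_G\sigma\,{\mr b}$. This follows from the definition of $\cnonfork_G$ (finite satisfiability of $\tp(\sigma\,{\mr a}/G,\sigma\,{\mr b})$ in $G$): any formula over $G$ witnessed by $\sigma\,{\mr a}$ against the parameter $\sigma\,{\mr b}$ can be pulled back through the $G$-definable graph of $\sigma$ to a formula over $G$ witnessed by ${\mr a}$ against ${\mr b}$, whose solutions in $G$ push forward (again via $\sigma$, which maps $G$ into $G$ since it is $G$-definable) to solutions of the original. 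Granting this, the inclusion $\subseteq$ in part~2 is immediate: if ${\mr a}\cnonfork_G{\mr b}$ then $\sigma({\mr a}\cdot{\mr b})=\sigma\,{\mr a}\cdot\sigma\,{\mr b}$ with $\sigma\,{\mr a}\cnonfork_G\sigma\,{\mr b}$, so the image lands in $\sigma\,{\mr a}\cdot_G\sigma\,{\mr b}$; combined with part~1 and invariance this gives $\sigma[{\mr a}\cdot_G{\mr b}]\subseteq\sigma\,{\mr a}\cdot_G\sigma\,{\mr b}$. For $\supseteq$, take any ${\mr a'}\equiv_G\sigma\,{\mr a}$, ${\mr b'}\equiv_G\sigma\,{\mr b}$ with ${\mr a'}\cnonfork_G{\mr b'}$; by part~1 write ${\mr a'}=\sigma\,{\mr a''}$, ${\mr b'}=\sigma\,{\mr b''}$ with ${\mr a''}\equiv_G{\mr a}$, ${\mr b''}\equiv_G{\mr b}$, and I would use the coheir extension lemma (Lemma~\ref{lem_coheir_independence}) to move ${\mr a''}$ (over $G,{\mr b''}$) to an independent copy, noting that applying $\sigma$ does not change $\sigma\,{\mr a''}$ or $\sigma\,{\mr b''}$ and, by preservation of $\cnonfork_G$, only improves independence downstairs; hence the preimage $\cdot$-product already sits in ${\mr a}\cdot_G{\mr b}$ and its $\sigma$-image is ${\mr a'}\cdot{\mr b'}$.

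The main obstacle is the $\supseteq$ direction of part~2: a priori a witness $({\mr a'},{\mr b'})$ of independence \emph{downstairs} (after $\sigma$) need not lift to an independent pair upstairs, since $\sigma$ only preserves $\cnonfork_G$ in one direction. The fix is the one sketched above — do not try to lift the independence, but instead lift the pair arbitrarily (part~1), then repair independence upstairs via coheir extension, using that $\sigma$ leaves the relevant images untouched; one then checks the resulting product maps to the desired element and lies in ${\mr a}\cdot_G{\mr b}$.
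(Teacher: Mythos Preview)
Your argument for part~1 and for the inclusion $\subseteq$ in part~2 is correct and matches the paper's. The gap is precisely where you flag it: the $\supseteq$ direction of part~2. Your proposed fix does not work as written. First, Lemma~\ref{lem_coheir_independence}(4) has the premise ${\mr a''}\cnonfork_G{\mr b''}$, which you do not have; you can of course find some ${\mr a'''}\equiv_G{\mr a''}$ with ${\mr a'''}\cnonfork_G{\mr b''}$ by taking a global coheir, but that is not the extension lemma. Second, and more seriously, the claim that ``applying $\sigma$ does not change $\sigma\,{\mr a''}$'' is false: if $f\in\Aut(\mrG/G,{\mr b''})$ moves ${\mr a''}$ to ${\mr a'''}$, then $\sigma\,{\mr a'''}=f(\sigma\,{\mr a''})$, which is only in the $G$-orbit of ${\mr a'}$, not equal to it. So your preimage product ${\mr a'''}\cdot{\mr b''}$ lands in ${\mr a}\cdot_G{\mr b}$, but its $\sigma$-image is $\sigma\,{\mr a'''}\cdot{\mr b'}$, not ${\mr a'}\cdot{\mr b'}$.

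The paper sidesteps this entirely, and this is where $1$-stationarity actually enters (you never invoke it). By Proposition~\ref{prop_orbits_main}, ${\mr a}\cdot_G{\mr b}$ is a single $G$-orbit, so by part~1 its image $\sigma[{\mr a}\cdot_G{\mr b}]$ is a single $G$-orbit; likewise $\sigma\,{\mr a}\cdot_G\sigma\,{\mr b}$ is a single $G$-orbit. Between two orbits, one inclusion forces equality, so the $\subseteq$ you already proved is enough. Your lifting-and-repairing strategy can be salvaged, but only by appealing to this same orbit observation at the end, at which point the detour through preimages is unnecessary.
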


\begin{proof}\ \ssf{1.}  
   As ${\mr a}\equiv_G{\mr a'}$ implies 
   $\sigma\,{\mr a}\equiv_G\sigma\,{\mr a'}$, inclusion $\subseteq$ is clear.
   For the converse, note that the type 
   $\E{\bl y}\,\big[\sigma\,{\bl y}={\bl x}
      \,\wedge\,
      {\bl y}\equiv_G{\mr a}\big]$ 
   is trivially realized by $\sigma\,{\mr a}$.
   Therefore it is realized by all elements of $(\sigma\,{\mr a})_G$.
   Hence all elements of $(\sigma\,{\mr a})_G$ are the image of 
   some element in ${\mr a}_G$.

   \ssf{2.} \ 
   Let ${\mr a}\equiv_G{\mr a'}\cnonfork_G{\mr b'}\equiv_G{\mr b}$.
   By Proposition~\ref{prop_orbits_main} we have 
   $\sigma\big[{\mr a}\cdot_G{\mr b}\big]
   =\sigma\big[({\mr a'}\cdot{\mr b'})_G\big]$.
   Then it suffices to prove that 
   $\sigma\big[({\mr a'}\cdot{\mr b'})_G\big]
   \subseteq\sigma\,{\mr a}\cdot_G\sigma\,{\mr b}$, 
   because by \ssf{1} and Proposition~\ref{prop_orbits_main} 
   both sides of the equality are orbits.
   As $\sigma$ preserves $\cnonfork_G$ 
   and orbits, we obtain that $\sigma({\mr a'}\cdot{\mr b'})$ 
   is in $\sigma\,{\mr a}\cdot_G\sigma\,{\mr b}$, as well as all 
   other elements of $\sigma\big[({\mr a'}\cdot{\mr b'})_G\big]$.
\end{proof}

\theoremstyle{mio}
\newtheorem{HalesJewett}[thm]{Hales-Jewett Theorem}
\begin{HalesJewett}[(Koppelberg's version)]\label{thm_abstract_HJ}
   Let $G$ be an infinite semigroup and let $C\subset G$
   be a nice subsemigroup.
   Let $\Sigma$ be a finite set of retractions of $G$ onto $C$.
   Then, for every finite coloring of $C$, there is an $a\in G\sm C$ 
   such that $\{\sigma\,a:\sigma\in\Sigma\}$ is monochromatic.
\end{HalesJewett}

\begin{proof}
   Let $G\preceq\mrG$.
   Here $\mrG$ is a monster model in a language that expands the natural 
   one with a symbol for all subsets of $G$ and for every retraction in $\Sigma$.
   As observed in Remark~\ref{rk_coheir_stationary}, 
   this makes $\cnonfork_G$ trivially $1$-stationary.
   Let $\mrC$ be the definable set such that $C=G\cap\mrC$.
   By elementarity, $\mrC$ is a nice subsemigroup of $\mrG$.
   The language contains also symbols for 
   the retractions $\sigma:\mrG\to\mrC$.
   
   By Proposition~\ref{prop_minimal_existence1}, there is a left-minimal
   ${\mr c}\in\mrC$ with idempotent orbit.
   
   By niceness, $\mrG\sm\mrC$ and ${\mr c}$ satisfy the assumptions of 
   Proposition~\ref{prop_minimal_existence2}.
   Hence, by the first claim of that proposition, there is an idempotent
   ${\mr g}\in{\mr c}\cdot_G(\mrG\sm\mrC)\cdot_G{\mr c}$.
   In particular, ${\mr g}\in\mrG\sm\mrC$.
   Now apply the second claim of Proposition~\ref{prop_HJ_tecnical}, 
   with $\mrC$ for $\mrA$ to obtain 
   $\sigma\,{\mr g}\in{\mr c}\cdot_G\mrC\cdot_G{\mr c}$ 
   for all $\sigma\in\Sigma$.
   As $\sigma\,{\mr g}$ is also idempotent, we apply 
   Proposition~\ref{prop_minimal_existence2} to conclude that 
   $\sigma\,{\mr g}\equiv_G{\mr c}$.
   In particular the set $\{\sigma\,{\mr g}:\sigma\in\Sigma\}$ is
   monochromatic.
   
   Though the element ${\mr g}$ above need not belong to $G\sm C$,
   by elementarity $G\sm C$ contains some $a$ with the same property
   and this proves the theorem.
\end{proof}

Finally we show how the classical Hales-Jewett theorem follows 
from its abstract version.

If $C$ and $X$ are two semigroups we denote by $C*X$ their free product.
That is, $C*X$ contains finite sequences of elements of $C\cup X$, 
below called \emph{words,} that alternate elements in $C$ with
elements in $X$.
The product of two words is obtained concatenating them and, when it 
applies, replacing two contiguous elements of the same semigroup by their
product.
Note that $C$ and $X$ are nice subsemigroups of $C*X$.
When $X$ is the free semigroup generated by a variable $x$, we denote $C*X$ by \emph{$C[x]$.} If $w(x)$ is an element of $C[x]$ and $a\in C$ we denote by $w(a)$ the result of replacing $x$ by $a$ in $w(x)$.

% When $A$ is the set of generators of a free semigroup, then $\Trm(A,x)$ is
% the set of \emph{variable words\/} and $\Trm(A)$ the set of 
% \emph{constant words,} see Example~\ref{ex_words}.

\begin{HalesJewett}[(classical version)]\label{thm_HalesJewett}
   Let $C$ be a semigroup generated by some finite set $A$.
   Let $x$ be a variable.
   Then for every finite coloring of $C[x]$ there is a $w(x)\in C[x]\sm C$
   such that $\{ w(a): a\in A\}$ is monochromatic.
\end{HalesJewett}

\begin{proof}
   Let $G=C[x]$.
   For every $a\in A$ the homomorphism $\sigma_a:w(x)\mapsto w(a)$ 
   is a retraction of $G$ onto $C$.
   Hence we can apply the theorem above.
\end{proof}

We conclude with a variant of Theorem~\ref{thm_abstract_HJ} 
that applies to a broader class of semigroup homomorphisms.
This result is not required for the following.

For $\Sigma$ a set of maps $\sigma:G\to C$ and $c\in C$ we define 

\ceq{\hfill\emph{$\Sigma^{-1}[c]$}}
   {=}
   {\bigcap_{\sigma\in\Sigma}\sigma^{-1}[c]}\smallskip

Clearly, when the maps in $\Sigma$ are retractions, $\Sigma^{-1}[c]$ is 
non-empty for all $c\in C$ because it contains at least $c$.

\begin{HalesJewett}[(yet another variant)]\label{thm_hom_HJ}
   Let $C$ be a semigroup and let $\Sigma$ be a finite set of homomorphisms 
   $\sigma:G\to C$ such that $\Sigma^{-1}[c]$ is non-empty for all $c\in C$.
   Then, for every finite coloring of $C$, there is a $g\in G$ such that
   the set $\{\sigma\,g\ :\ \sigma\in\Sigma\}$ is monochromatic.
\end{HalesJewett}

\begin{proof}
   Let $G*C$ be the free product of the two semigroups.
   Any homomorphism $\sigma:G\to C$ extends canonically to a retraction
   of $G*C$ onto $C$.
   The elements of $G$ that occur in a word are replaced by their image under
   $\sigma$, finally the elements in the resulting sequence are multiplied.
   This extension is denoted by the same symbol $\sigma$.

   Apply Theorem~\ref{thm_abstract_HJ} to obtain some $w\in G*C$ such that 
   $\{\sigma \,w:\sigma\in\Sigma\}$ is monochromatic.
   Suppose $w=c_0\cdot g_0\cdots\cdots c_n\cdot g_n$ for some $g_i\in G$ and 
   $c_i\in C$, where one or both of $c_0$ or $g_n$ could be absent.
   Pick some $h_i\in\Sigma^{-1}[c_i]$ and let 
   $g=h_0\cdot g_0\cdots\cdots h_n\cdot g_n$.
   Then $\{\sigma\,g:\sigma\in\Sigma\}$ is monochromatic as required 
   to complete the proof.
\end{proof}
%%%%%%%%%%%%%%%%%%%%%%%%%%%%%%%%%%%%%%%%%%%%%%%%%%%%%%%%%%%%%%%%%%%%%%%%%%%%
%%%%%%%%%%%%%%%%%%%%%%%%%%%%%%%%%%%%%%%%%%%%%%%%%%%%%%%%%%%%%%%%%%%%%%%%%%%%
%%%%%%%%%%%%%%%%%%%%%%%%%%%%%%%%%%%%%%%%%%%%%%%%%%%%%%%%%%%%%%%%%%%%%%%%%%%%
%%%%%%%%%%%%%%%%%%%%%%%%%%%%%%%%%%%%%%%%%%%%%%%%%%%%%%%%%%%%%%%%%%%%%%%%%%%%
%%%%%%%%%%%%%%%%%%%%%%%%%%%%%%%%%%%%%%%%%%%%%%%%%%%%%%%%%%%%%%%%%%%%%%%%%%%%
%%%%%%%%%%%%%%%%%%%%%%%%%%%%%%%%%%%%%%%%%%%%%%%%%%%%%%%%%%%%%%%%%%%%%%%%%%%%
%%%%%%%%%%%%%%%%%%%%%%%%%%%%%%%%%%%%%%%%%%%%%%%%%%%%%%%%%%%%%%%%%%%%%%%%%%%%
%%%%%%%%%%%%%%%%%%%%%%%%%%%%%%%%%%%%%%%%%%%%%%%%%%%%%%%%%%%%%%%%%%%%%%%%%%%%
\section{Carlson's theorem}\label{carlson}

\def\medrel#1{\parbox[t]{6ex}{$\displaystyle\hfil #1$}}
\def\lbox#1{\parbox[t]{7ex}{$\displaystyle #1$}}
\def\rbox#1{{$\displaystyle #1$}}
\def\ceq#1#2#3{\lbox{#1}\medrel{#2}\rbox{#3}}

This section is devoted to the following lemma and some of its consequences.

\begin{lemma}\label{lem_Carlson}
   Let $\Sigma$ be a finite set of retractions of $G$ onto 
   a nice subsemigroup $C$.
   Let $\cpaw$ be a relation on $G$ that makes it \cpawdot-closed 
   and $\cpaw$-covered by $G\sm C$.
   Then, for every finite coloring of $G$, there is a $\cpaw$-chain 
   $\bar a\in  (G\sm C)^\omega$ such that $\fp^{\Sigma}\bar a\sm C$
   is monochromatic.
\end{lemma}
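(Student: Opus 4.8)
The plan is to combine the Hindman-style construction from Theorem~\ref{thm_Hindman} with the Hales-Jewett machinery of Section~\ref{HJ}, working inside a monster model $\mrG$ where the language has been expanded by a symbol for every subset of $G$, for $\cpaw$, and for every retraction in $\Sigma$. As in the earlier proofs, this makes $\cnonfork_G$ trivially $1$-stationary. Let $\mrC$ be the definable set with $C=G\cap\mrC$; by elementarity $\mrC$ is a nice subsemigroup of $\mrG$, the symbols $\sigma\in\Sigma$ name retractions $\sigma:\mrG\to\mrC$, and the type-definable set ${\mr\G'}=\{{\mr g}:G\cpaw{\mr g}\}$ is non-empty since $G$ is $\cpaw$-covered by $G\sm C$. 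The first step is to check, exactly as in Theorem~\ref{thm_Hindman}, that ${\mr\G'}$ is idempotent: if ${\mr a},{\mr b}\in{\mr\G'}$ with ${\mr a}\cnonfork_G{\mr b}$ then $G\cpaw{\mr a},{\mr b}$ forces ${\mr a}\cpaw{\mr b}$, so $\cpawdot$-closedness gives ${\mr a}{\cdot}{\mr b}\in{\mr\G'}$. Moreover, since $\cpaw$-covering is witnessed in $G\sm C$, every element of ${\mr\G'}$ lies in $\mrG\sm\mrC$, so ${\mr\G'}\subseteq\mrG\sm\mrC$.

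Next I would run the Hales-Jewett argument relative to this idempotent set ${\mr\G'}$ rather than all of $\mrG\sm\mrC$. Since ${\mr\G'}$ is type-definable and idempotent, Proposition~\ref{prop_minimal_existence1} (applied with $\mrA={\mr\G'}$) is not quite what is needed because we also want a left-minimal element of $\mrC$; instead I would first use Proposition~\ref{prop_minimal_existence1} with $\mrA=\mrC$ to get a left-minimal ${\mr c}\in\mrC$ with idempotent orbit, then observe that $\mrC$ and ${\mr c}$ satisfy the hypotheses of Proposition~\ref{prop_minimal_existence2} by niceness. The subtlety is that I must carry out the construction so that the resulting idempotent element lands in ${\mr\G'}$, not merely in $\mrG\sm\mrC$. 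For this I intend to apply the existence-of-idempotents machinery of Section~\ref{semigroups} to the intersection-style set ${\mr\G'}\cdot_G{\mr c}$ (or ${\mr c}\cdot_G{\mr\G'}\cdot_G{\mr c}$): one checks it is type-definable over $G$, contained in ${\mr\G'}$ (using ${\mr c}\cdot_G{\mr\G'},\ {\mr\G'}\cdot_G{\mr c}\subseteq{\mr\G'}$, which follows from $\cpawdot$-closedness applied to ${\mr c}\in\mrG\sm\mrC\subseteq\ldots$—actually from the fact that ${\mr\G'}$ absorbs on both sides because $G\cpaw$ is preserved under multiplying by something below it), and idempotent; then Corollary~\ref{corol_idempotent} yields an idempotent ${\mr g}$ in it with ${\mr g}\in{\mr c}\cdot_G\mrC'\cdot_G{\mr c}$-type position, and by the left-minimality of ${\mr c}$ plus Proposition~\ref{prop_minimal_existence2}(2) one gets $\sigma\,{\mr g}\equiv_G{\mr c}$ for all $\sigma\in\Sigma$, hence $\{\sigma\,{\mr g}:\sigma\in\Sigma\}$ together with ${\mr g}$ itself (which has color say $1$) behaves coherently. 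The key point I need is that ${\mr g}$ has idempotent orbit \emph{and} lies in ${\mr\G'}$, so that a coheir sequence built over $\tp({\mr g}/G)$ can be used in the combinatorial extraction below.

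Having fixed such a ${\mr g}\in{\mr\G'}\sm\mrC$ with idempotent orbit and with $\sigma\,{\mr g}\equiv_G{\mr g}$ for all $\sigma\in\Sigma$ (adjusting so all of $\fp^\Sigma\{{\mr g}\}\sm\mrC$ is monochromatic of color $1$—this uses that the orbit is idempotent together with Proposition~\ref{prop_orbits_main} and Proposition~\ref{prop_HJ_tecnical} to see that $\sigma_{i_0}{\mr g}\cdots\sigma_{i_k}{\mr g}\equiv_G{\mr g}$), I would take a global coheir $p({\bl x})$ of $\tp({\mr g}/G)$ and a coheir sequence ${\mr\bar g}$. Exactly as in Theorem~\ref{thm_Hindman}, indiscernibility plus Proposition~\ref{prop_orbits_main} give that every element of $\fp^\Sigma({\mr\cev{g}_{\restriction i}})\sm\mrC$ is $\equiv_G{\mr g}$, hence monochromatic of color $1$, and moreover ${\mr g_i}\in{\mr\G'}$ so $G\cpaw{\mr g_i}$, which will let the reflected $\cpaw$-chain stay in $G\sm C$. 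Then I run the induction: maintaining the hypothesis that $\fp^\Sigma(a_{\restriction i},{\mr g_0})\sm C$ is monochromatic of color $1$ and $a_{\restriction i}$ is a $\cpaw$-chain in $(G\sm C)^\omega$ (the $\cpaw$-relations being arranged via $G\cpaw{\mr g_0}$ and $\cpawdot$-closedness as in the Hindman proof), I first propagate to $\fp^\Sigma(a_{\restriction i},{\mr\cev{g}_{\restriction j}})\sm C$ using ${\mr h}\equiv_G{\mr g_0}$, then write the requirement for $\fp^\Sigma(a_{\restriction i},{\mr\cev{g}_{\restriction i+2}})\sm C$ (finitely many elements, a first-order formula $\phi(a_{\restriction i},{\mr g_{i+1}},{\mr g_{\restriction i+1}})$ that also encodes $a_i\notin C$ and $a_{i-1}\cpaw a_i$ and $G\cpaw a_i$—all definable), and use that ${\mr\bar g}$ is a coheir sequence to pull $a_i$ down into $G$. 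The main obstacle, I expect, is precisely the bookkeeping that forces ${\mr g}$ and all the ${\mr g_i}$ into ${\mr\G'}$ (so that the extracted chain lands in $(G\sm C)^\omega$ and respects $\cpaw$) simultaneously with the left-minimality/idempotency conditions needed to make $\fp^\Sigma\,\bar a\sm C$ monochromatic—i.e.\ showing that the Hales-Jewett idempotent can be found \emph{inside} the Hindman idempotent set ${\mr\G'}$; everything else is a routine merge of the two earlier proofs.
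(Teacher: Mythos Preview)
Your plan is essentially the paper's own proof: expand the language, take a left-minimal idempotent ${\mr c}\in\mrC$ via Proposition~\ref{prop_minimal_existence1}, find an idempotent ${\mr g_0}$ inside ${\mr c}\cdot_G(\text{Hindman set})\cdot_G{\mr c}$ via Proposition~\ref{prop_minimal_existence2}, and then run the coheir extraction of Theorem~\ref{thm_Hindman}. Two slips need fixing, however.

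First, the claim that ${\mr\G'}=\{{\mr g}:G\cpaw{\mr g}\}\subseteq\mrG\sm\mrC$ does not follow from ``$\cpaw$-covered by $G\sm C$'': nothing forbids elements of $\mrC$ from also lying above all of $G$. The paper simply takes $\mrB=\{{\mr g}\in\mrG\sm\mrC:G\cpaw{\mr g}\}$ from the outset; this is the set to which Proposition~\ref{prop_minimal_existence2} is applied, and it is non-empty precisely because the covering witnesses live in $G\sm C$. This also resolves your hesitation about whether ${\mr c}\cdot_G{\mr\G'}\subseteq{\mr\G'}$: what you actually need is ${\mr c}\cdot_G\mrB,\ \mrB\cdot_G{\mr c}\subseteq\mrB$, and this follows from niceness together with $\cpawdot$-closedness.

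Second, and more important for the combinatorics, the conclusion of Proposition~\ref{prop_minimal_existence2}(2) is $\sigma\,{\mr g_0}\equiv_G{\mr c}$, \emph{not} $\sigma\,{\mr g_0}\equiv_G{\mr g_0}$. Indeed $\sigma\,{\mr g_0}\in\mrC$ while ${\mr g_0}\notin\mrC$, so the latter is impossible. The reason every ${\mr h}\in\fp^{\Sigma}{\mr\cev g}_{\restriction i}\sm\mrC$ satisfies ${\mr h}\equiv_G{\mr g_0}$ is rather this: each factor of ${\mr h}$ is in $({\mr g_0})_G$ or in $({\mr c})_G$; since ${\mr g_0}\in{\mr c}\cdot_G\mrB\cdot_G{\mr c}$ and $({\mr c})_G$ is idempotent, one has ${\mr c}\cdot_G{\mr g_0}={\mr g_0}\cdot_G{\mr c}=({\mr g_0})_G$, so any such product with at least one $({\mr g_0})_G$-factor collapses to $({\mr g_0})_G$; and ``at least one factor uses $\id_G$'' is exactly the condition ${\mr h}\notin\mrC$, by niceness. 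With these two corrections your extraction argument goes through verbatim.
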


\begin{proof}
   The models ${\mr\G}$ and ${\mr\C}$ are as in the proof of
   Theorem~\ref{thm_abstract_HJ}.
   The language is the same with $\cpaw$ included.
   Let ${\mr\B}=\{{\mr g}\in\mrG\sm\mrC: G\cpaw{\mr g}\}$.
   By Proposition~\ref{prop_minimal_existence1} there is some 
   left-minimal ${\mr c}\in\mrC$ with idempotent orbit.
   As $G$ is $\cpaw$-covered by $G\sm C$, the set $\mrB$ is non-empty.
   As $G$ is \cpawdot-closed and $C$ is nice, $\mrB$ and ${\mr c}$ satisfy
   the assumptions of Proposition~\ref{prop_minimal_existence2}.
   Then, ${\mr c}\cdot_G\mrB\cdot_G{\mr c}$ contains some ${\mr g_0}$ with idempotent orbit.
   By Proposition~\ref{prop_HJ_tecnical}, we obtain that
   $\sigma\,{\mr g_0}\in{\mr c}\cdot_G\mrC\cdot_G{\mr c}$ 
   for all $\sigma\in\Sigma$.
   As $(\sigma\,{\mr g_0})_G$ is also idempotent, we apply the second claim of
   Proposition~\ref{prop_HJ_tecnical}, with $\mrC$ for $\mrA$ to conclude 
   that    $\sigma\,{\mr g_0}\equiv_G{\mr c}$ for all $\sigma\in\Sigma$.
   Now, let ${\mr\bar g}$ be a coheir sequence as in 
   Theorem~\ref{thm_Hindman}, and assume the notation thereof.
   As ${\mr g_0}\in{\mr c}\cdot_G\mrB\cdot_G{\mr c}$ then 
   ${\mr c}\cdot_G {\mr g_0} = {\mr g_0}\cdot_G{\mr c} = ({\mr g_0})_G$.
   Hence ${\mr h}\equiv_G{\mr g_0}$ for all $i$ and all 
   ${\mr h}\in \fp\,{\mr \cev{g}_{\restriction i}}\sm \mrC$.
   In particular all these ${\mr h}$ have the same color, say color $1$.
   Now, we can use the sequence ${\mr\bar g}$ to define 
   $\bar a\in (G\sm C)^\omega$ such that all elements of 
   $\fp^\Sigma\,\bar a\sm C$ have color $1$ by reasoning as in the proof 
   of Theorem~\ref{thm_Hindman}.
\end{proof}

Carlson's theorem is a result that combines the theorems of Hindman and 
Hales-Jewett and has a number of important consequences. 
We refer the reader to \cite{DK} for a discussion of some of 
these consequences.
The definitions in Example~\ref{ex_words} will help matching the notation.

We first present a Koppelberg-style version of the theorem.
It is obtained from the lemma above applying a suitable coding.

\theoremstyle{mio}
\newtheorem{Carlson}[thm]{Carlson Theorem}
\begin{Carlson}[(à la Koppelberg)]\label{thm_Carlson2}
   Let $\Sigma$ be a finite set of retractions of $G$
   onto a nice subsemigroup $C$.
   Let $\bar s\in(G\sm C)^\omega$.
   Then for every finite coloring of $G$, there is an increasing sequence of positive integers $\<n_i:i<\omega\>$ and some $a_i\in\fp^{\Sigma}s_{\restriction\,[n_i,n_{i+1})}\sm C$ such that $\fp^{\Sigma}\bar a\sm C$ is monochromatic.
\end{Carlson}

\begin{proof}
   Let $G_*$ be the free semigroup generated by the alphabet

   \hfil$\{\<\sigma,g\>\ :\ \sigma\in\Sigma\cup\{\id_G\},\ g\in G\sm C\}$. 
   
   The semigroup $C_*$ is defined as $G_*$, only $\sigma$ is restricted 
   to range over $\Sigma$.
   Clearly $C_*$ is a nice subsemigroup of $G_*$.
   We associate to each $\sigma\in\Sigma$ the endomorphism of $G_*$ that
   substitutes $\sigma$ for every occurrence of $\id_G$ in a word.
   These maps, which we denote by $\sigma_*$, are retractions 
   of $G_*$ onto $C_*$.
   
   If $g_*\in G_*$ has the form $\<\sigma_1,g_1\>\cdots\<\sigma_n,g_n\>$ 
   we call $\sigma_1\,g_1\cdots\sigma_n\,g_n\in G$ the 
   \textit{evaluation\/} of $g_*$.
   We denote the evaluation by ${\rm eval}(g_*)$.
   As $\tau\,\sigma=\sigma$ for every $\tau,\sigma\in\Sigma$, we have that
   ${\rm eval}(\sigma_*\,g_*)=\sigma\,{\rm eval}(g_*)$.
   The evaluation of $g_*\in C_*$ belongs to $C$ and,
   as $C$ is nice, the evaluation of $g_*\in G_*\sm C_*$ belongs to $G\sm C$.

   We color each element of $G_*$ with the color of its evaluation.

   We define the relation $\cpaw$ on $G_*$.
   First, we need to define the \textit{well-formed\/} elements of $G_*$.
   These are elements of the form 
   $\<\sigma_1,s_{i_1}\>\cdots\<\sigma_n,s_{i_n}\>$ for some $i_1<\dots<i_n$.
   Now, for $h_*,g_*\in G_*$ we define $h_*\cpaw g_*$ if one 
   of the following holds
   \\
   \ssf{1.}\quad $h_*$ is not well-formed while $g_*$ is;
   \\
   \ssf{2.}\quad the product (i.e., concatenation) $h_*g_*$ is well-formed.
   \\
   It is immediate to verify that $\cpaw$ is $G_*$ is \cpawdot-closed 
   and $\cpaw$-covered by $G_*\sm C_*$.
   Therefore by Lemma~\ref{lem_Carlson} there is a $\cpaw$-chain 
   $\bar a_*\in (G_*\sm C_*)^\omega$ such that $\fp^{\Sigma}\bar a_*\sm C_*$ 
   is monochromatic.
   We can assume that all elements of $\bar a_*$ are well-formed (only 
   the first element might be ill-formed, but we can drop it).
   Then the sequence $ \<{\rm eval}( a_{i*}) :i\in \omega\>$ is as required by the lemma.
\end{proof}

From the algebraic version of Carlson's theorem we obtain the 
classical one in the same way as for the Hales-Jewett theorem 
(Theorem~\ref{thm_HalesJewett}), which we refer to for the notation.

\begin{corollary}[(Carlson's theorem, classical version)]\label{corol_Carlson}
   Let $C$ be a semigroup generated by some finite set $A$.
   Let $x$ be a variable.
   Let $\bar s\in\big(C[x]\sm C\big)^\omega$.
   Let $\Sigma$ contain, for every $a\in A$, the function $w(x)\mapsto w(a)$.
   Then, for every finite coloring of $C[x]$, there is an increasing sequence of positive integers $\<n_i:i<\omega\>$ and some $a_i\in\fp^{\Sigma}s_{\restriction\,[n_i,n_{i+1})}\sm C$ such that $\fp^{\Sigma}\bar a\sm C$ is monochromatic (with the terminology of Example~\ref{ex_words}, $\bar a$ is an extracted variable sequence of $\bar s$).\QED
\end{corollary}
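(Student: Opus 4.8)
The plan is to read the corollary off Theorem~\ref{thm_Carlson2} via the same reduction that produced the classical Hales-Jewett theorem (Theorem~\ref{thm_HalesJewett}) from its abstract form. Take $G=C[x]=C*X$ and keep $C$ itself; by the remarks in Section~\ref{HJ}, $C$ is a nice subsemigroup of $C[x]$. For each $a\in A$ the substitution map $\sigma_a:w(x)\mapsto w(a)$ is a homomorphism $G\to C$ that restricts to the identity on $C$, hence a retraction of $G$ onto $C$, exactly as noted in the proof of Theorem~\ref{thm_HalesJewett}. Since $A$ is finite, $\Sigma=\{\sigma_a:a\in A\}$ is a finite set of retractions of $G$ onto the nice subsemigroup $C$, so together with the given $\bar s\in(C[x]\sm C)^\omega$ all the hypotheses of Theorem~\ref{thm_Carlson2} are in place.

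Feeding an arbitrary finite coloring of $C[x]$ into Theorem~\ref{thm_Carlson2} then yields an increasing sequence $\<n_i:i<\omega\>$ of positive integers and elements $a_i\in\fp^{\Sigma}s_{\restriction\,[n_i,n_{i+1})}\sm C$ for which $\fp^{\Sigma}\bar a\sm C$ is monochromatic -- which is already the assertion of the corollary. It then only remains to phrase this in the vocabulary of Example~\ref{ex_words}: the condition that $a_i\in\fp^{\Sigma}s_{\restriction\,[n_i,n_{i+1})}$ along some increasing sequence of integers says exactly that $\bar a$ is an extracted sequence of $\bar s$, and the additional requirement $a_i\notin C$, i.e.\@ that each $a_i$ is a variable word, upgrades this to an extracted variable sequence.

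I do not expect any real obstacle: all the combinatorial content has been absorbed into Theorem~\ref{thm_Carlson2} (itself obtained by a coding argument from Lemma~\ref{lem_Carlson}), so the proof is a pure instantiation. The only two points worth double-checking -- that each substitution map fixes $C$ pointwise, and that $C$ is nice inside $C[x]=C*X$ -- were already recorded in Section~\ref{HJ}, so nothing genuinely new has to be verified here.
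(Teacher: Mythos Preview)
Your proposal is correct and matches the paper's approach exactly: the corollary is stated with a \QED and the surrounding text simply says to instantiate Theorem~\ref{thm_Carlson2} with $G=C[x]$ and the substitution retractions $\sigma_a$, just as in the passage from Theorem~\ref{thm_abstract_HJ} to Theorem~\ref{thm_HalesJewett}. Your added remark translating the conclusion into the language of Example~\ref{ex_words} is a welcome clarification but not a deviation.
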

%%%%%%%%%%%%%%%%%%%%%%%%%%%%%%%%%%%%%%%%%%%%%%%%%%%%%%%%%%%%%%%%%%%%%%%%%%%%
%%%%%%%%%%%%%%%%%%%%%%%%%%%%%%%%%%%%%%%%%%%%%%%%%%%%%%%%%%%%%%%%%%%%%%%%%%%%
%%%%%%%%%%%%%%%%%%%%%%%%%%%%%%%%%%%%%%%%%%%%%%%%%%%%%%%%%%%%%%%%%%%%%%%%%%%%
%%%%%%%%%%%%%%%%%%%%%%%%%%%%%%%%%%%%%%%%%%%%%%%%%%%%%%%%%%%%%%%%%%%%%%%%%%%%
%%%%%%%%%%%%%%%%%%%%%%%%%%%%%%%%%%%%%%%%%%%%%%%%%%%%%%%%%%%%%%%%%%%%%%%%%%%%
%%%%%%%%%%%%%%%%%%%%%%%%%%%%%%%%%%%%%%%%%%%%%%%%%%%%%%%%%%%%%%%%%%%%%%%%%%%%
%%%%%%%%%%%%%%%%%%%%%%%%%%%%%%%%%%%%%%%%%%%%%%%%%%%%%%%%%%%%%%%%%%%%%%%%%%%%
%%%%%%%%%%%%%%%%%%%%%%%%%%%%%%%%%%%%%%%%%%%%%%%%%%%%%%%%%%%%%%%%%%%%%%%%%%%%
\section{Gowers's partition theorem}\label{gowers}

\def\medrel#1{\parbox[t]{6ex}{$\displaystyle\hfil #1$}}
\def\lbox#1{\parbox[t]{20ex}{$\displaystyle #1$}}
\def\rbox#1{{$\displaystyle #1$}}
\def\ceq#1#2#3{\lbox{#1}\medrel{#2}\rbox{#3}}

The following is similar to Lemma~\ref{lem_Carlson} but here $\Sigma$ contains compositions of homomorphisms.

\begin{lemma}\label{lem_Gowers}
   For $0<i<n$, let $G_i$ be a nice subsemigroup of $G_{i+1}$ and let $\sigma_i:G_{i+1}\to G_i$ be homomorphisms.
   Let $\cpaw$ be a relation on $G_n$ that makes it \cpawdot-closed and  $\cpaw$-covered by $G_n\sm G_{n-1}$.
   Finally, let $\Sigma=\big\{\sigma_i\circ\dots\circ\sigma_{n-1}: 0< i< n\big\}$.
   Then, for every finite coloring of $G_n$, there is a $\cpaw$-chain 
   $\bar a\in \big(G_n\sm G_{n-1}\big)^\omega$ such that $\fp^{\Sigma}\bar a\sm G_{n-1}$ is monochromatic.
\end{lemma}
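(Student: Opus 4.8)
The plan is to mimic the proof of Lemma~\ref{lem_Carlson} almost verbatim, replacing the finite set of retractions onto a single nice subsemigroup $C$ by the tower $G_1 \subseteq \dots \subseteq G_{n-1} \subseteq G_n$ and the composed homomorphisms $\sigma_i \circ \dots \circ \sigma_{n-1}$. I would first pass to a monster model $\mrG_n \succeq G_n$ in a language expanding the natural one with a symbol for every subset of $G_n$ and for each $\sigma_i$, so that $\cnonfork_{G_n}$ is trivially $1$-stationary by Remark~\ref{rk_coheir_stationary}. By elementarity each $G_i = G_n \cap \mrG_i$ with $\mrG_i$ a nice subsemigroup of $\mrG_{i+1}$, and each $\sigma_i : \mrG_{i+1} \to \mrG_i$ remains a definable homomorphism; note however that the composed maps $\Sigma$ are homomorphisms $\mrG_n \to \mrG_{n-1}$, not retractions, so one cannot simply invoke Proposition~\ref{prop_HJ_tecnical} with "$\mrA = $ image" the way the Carlson proof does — this is the technical wrinkle to navigate.

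The core construction: apply Proposition~\ref{prop_minimal_existence1} to $\mrA = \mrG_{n-1}$ (idempotent and type-definable: it is a subsemigroup, hence $\mrG_{n-1} \cdot_{G_n} \mrG_{n-1} \subseteq \mrG_{n-1}$) to get a left-minimal ${\mr c} \in \mrG_{n-1}$ with idempotent orbit. Set ${\mr\B} = \{{\mr g} \in \mrG_n \sm \mrG_{n-1} : G_n \cpaw {\mr g}\}$, which is non-empty since $G_n$ is $\cpaw$-covered by $G_n \sm G_{n-1}$, and which is idempotent exactly as in Lemma~\ref{lem_Carlson} (using \cpawdot-closedness and the niceness of $G_{n-1}$ to check $\mrB$ and ${\mr c}$ meet the hypotheses of Proposition~\ref{prop_minimal_existence2}). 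Proposition~\ref{prop_minimal_existence2}(1) then yields an idempotent ${\mr g_0} \in {\mr c}\cdot_{G_n}\mrB\cdot_{G_n}{\mr c}$; in particular ${\mr g_0} \in \mrG_n \sm \mrG_{n-1}$ and ${\mr c}\cdot_{G_n}{\mr g_0} = {\mr g_0}\cdot_{G_n}{\mr c} = ({\mr g_0})_{G_n}$.

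The point where I must work a little harder than the Carlson proof is showing $\sigma\,{\mr g_0} \equiv_{G_n} {\mr c}$ for each $\sigma = \sigma_i \circ \dots \circ \sigma_{n-1} \in \Sigma$. Since $\sigma$ maps $\mrG_n$ into $\mrG_i \subseteq \mrG_{n-1}$ and is a definable homomorphism, Proposition~\ref{prop_HJ_tecnical}(2) gives $\sigma[{\mr c}\cdot_{G_n}{\mr g_0}\cdot_{G_n}{\mr c}] = \sigma\,{\mr c} \cdot_{G_n} \sigma\,{\mr g_0} \cdot_{G_n} \sigma\,{\mr c}$; I want this to land inside ${\mr c}\cdot_{G_n}\mrG_{n-1}\cdot_{G_n}{\mr c}$. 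The honest route is to observe that it suffices to have $\sigma\,{\mr g_0} \in \mrG_{n-1}\cdot_{G_n}{\mr c}$ and, symmetrically, $\mrG_{n-1}$ absorbs $\sigma\,{\mr c}$ appropriately — one checks directly that $\sigma\,{\mr g_0} = \sigma(c_1 g c_2)$ for suitable representatives with $\sigma\,c_j, \sigma\,g \in \mrG_{n-1}$, so $\sigma\,{\mr g_0} \in \mrG_{n-1}\cdot_{G_n}\mrG_{n-1}\cdot_{G_n}\mrG_{n-1} = \mrG_{n-1}$, and then, because ${\mr c}$ is left-minimal with idempotent orbit and $\sigma\,{\mr g_0}$ has idempotent orbit (it is the image of an idempotent under a homomorphism), the final uniqueness clause of Proposition~\ref{prop_minimal_existence2}(2) applied with $\mrA = \mrG_{n-1}$ forces $\sigma\,{\mr g_0} \equiv_{G_n} {\mr c}$ — this requires first arranging $\sigma\,{\mr g_0} \in {\mr c}\cdot_{G_n}\mrG_{n-1}\cdot_{G_n}{\mr c}$, which follows from idempotency of $({\mr c})_{G_n}$ together with ${\mr g_0} \in {\mr c}\cdot_{G_n}(\cdot)\cdot_{G_n}{\mr c}$ and Proposition~\ref{prop_HJ_tecnical}. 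Granting this, the rest is the Hindman-style extraction of Theorem~\ref{thm_Hindman}: take a coheir sequence ${\mr\bar g}$ over a global coheir of $\tp({\mr g_0}/G_n)$; since $({\mr g_0})_{G_n}$ is idempotent, Proposition~\ref{prop_orbits_main} gives ${\mr h} \equiv_{G_n} {\mr g_0}$ for every ${\mr h} \in \fp^{\Sigma}{\mr\cev{g}_{\restriction i}} \sm \mrG_{n-1}$ (using $\sigma\,{\mr g_0} \equiv_{G_n} {\mr c}$ and ${\mr c}\cdot_{G_n}{\mr g_0} = {\mr g_0}\cdot_{G_n}{\mr c} = ({\mr g_0})_{G_n}$ to absorb the $\sigma$-applications), so all such ${\mr h}$ receive a single color, say $1$; then build $\bar a \in (G_n \sm G_{n-1})^\omega$ by induction inside $G_n$, at each stage using that ${\mr\bar g}$ is a coheir sequence to pull a first-order statement about $\fp^{\Sigma}(a_{\restriction i}, {\mr\cev{g}_{\restriction i+2}}) \sm \mrG_{n-1}$ being monochromatic back from $\mrG_n$ to $G_n$, exactly as in Theorems~\ref{thm_Hindman} and Lemma~\ref{lem_Carlson}. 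The main obstacle, as indicated, is the bookkeeping that keeps $\sigma\,{\mr g_0} \equiv_{G_n} {\mr c}$ valid when $\Sigma$ consists of genuine homomorphisms between different members of the tower rather than retractions onto one fixed $C$; once that equivalence is in hand, the combinatorial extraction is routine. \QED
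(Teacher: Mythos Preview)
Your approach has a genuine gap at exactly the point you flag as ``the technical wrinkle.'' You want $\sigma\,{\mr g_0}\in{\mr c}\cdot_{G_n}\mrG_{n-1}\cdot_{G_n}{\mr c}$ so that Proposition~\ref{prop_minimal_existence2}(2) applies, and you claim this follows from Proposition~\ref{prop_HJ_tecnical} plus idempotency of $({\mr c})_{G_n}$. But Proposition~\ref{prop_HJ_tecnical} applied to ${\mr g_0}\in{\mr c}\cdot_{G_n}\mrB\cdot_{G_n}{\mr c}$ yields only $\sigma\,{\mr g_0}\in\sigma\,{\mr c}\cdot_{G_n}\sigma[\mrB]\cdot_{G_n}\sigma\,{\mr c}$. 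In the Carlson setting $\sigma$ is a retraction onto $C$, so $\sigma\,{\mr c}={\mr c}$ and the argument goes through; here the maps in $\Sigma$ are merely homomorphisms into $\mrG_{n-1}$, so $\sigma\,{\mr c}$ has no reason to lie in $({\mr c})_{G_n}$, and no amount of idempotency of $({\mr c})_{G_n}$ lets you replace $\sigma\,{\mr c}$ by ${\mr c}$. Consequently you cannot conclude $\sigma\,{\mr g_0}\equiv_{G_n}{\mr c}$, and the rest of the argument collapses.

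The paper takes a genuinely different route that sidesteps this obstruction. Rather than fixing a single left-minimal ${\mr c}\in\mrG_{n-1}$ and hoping all $\sigma\,{\mr g_0}$ land in its orbit, it builds a \textit{tower} of idempotents from the bottom up. With $\mrB_n=\{{\mr b}\in\mrG_n\sm\mrG_{n-1}:G_n\cpaw{\mr b}\}$ and $\mrB_i=\sigma_i[\mrB_{i+1}]$, one first picks an arbitrary idempotent ${\mr b_1}\in\mrB_1$, and then inductively finds ${\mr b_{k+1}}$ with idempotent orbit inside ${\mr b_k}\cdot_G\big(\mrB_{k+1}\cap\sigma_k^{-1}[{\mr b_k}]\big)\cdot_G{\mr b_k}$ via Proposition~\ref{prop_minimal_existence2}(1) (left-minimality is never used). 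This guarantees $\sigma_k\,{\mr b_{k+1}}\equiv_G{\mr b_k}$ by construction, and the absorption relations ${\mr b_i}\cdot_G{\mr b_{i+1}}={\mr b_{i+1}}\cdot_G{\mr b_i}=({\mr b_{i+1}})_G$ telescope to give ${\mr b_n}\cdot_G{\mr b_i}={\mr b_i}\cdot_G{\mr b_n}=({\mr b_n})_G$ for every $i$. Note that the images ${\mr b_i}=\sigma_i\circ\dots\circ\sigma_{n-1}\,{\mr b_n}$ need \textit{not} all lie in a single orbit; what matters for the coheir-sequence extraction is only that each of them is absorbed by $({\mr b_n})_G$ on both sides. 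Your attempt to force all $\sigma\,{\mr g_0}$ into one orbit $({\mr c})_G$ is thus aiming for something stronger than necessary and, without retractions, unattainable.
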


\begin{proof}
   For convenience, we let $i$ run from $0$, hence we agree that $\sigma_0:G_1\to G_0=G_1$ is the identity.
   Let ${\mr\B_n}=\{{\mr b}\in{\mr\G_n}\sm{\mr\G_{n-1}}\,:\,G_n\cpaw{\mr b}\}$ and ${\mr\B_i}=\sigma_i[{\mr\B_{i+1}}]$.
   Note that the ${\mr\B_i}$ are non-empty because $G_n$ is $\cpaw$-covered by $G_n\sm G_{n-1}$.
   Also, as ${\mr\G_i}$ is a nice subsemigroup of ${\mr\G_{i+1}}$, we have that ${\mr\B_i}\cdot_G{\mr\B_{i+1}},\ {\mr\B_{i+1}}\cdot_G{\mr\B_i}\subseteq{\mr\B_{i+1}}$.
   
   We claim there is some ${\mr b_n}\in{\mr\B_n}$ with idempotent orbit such that, if we define ${\mr b_i}=\sigma_i\,{\mr b_{i+1}}$ for $0\le i< n$, the following holds
   
   \ceq{\hfill{\mr b_n}\cdot_G{\mr b_i}}
   {=}
   {{\mr b_i}\cdot_G{\mr b_n}}
   \medrel{=}
   \rbox{({\mr b_n})_G.}
   
   Note that these equalities may be replaced by
   
   \ceq{\sharp_i\hfill{\mr b_i}\cdot_G{\mr b_{i+1}}}
   {=}
   {{\mr b_{i+1}}\cdot_G{\mr b_i}}
   \medrel{=}
   \rbox{({\mr b_{i+1}})_G.}

   Let ${\mr b_0}={\mr b_1}$ be any element of ${\mr\B_0}$ with idempotent orbit.
   We assume as induction hypothesis that we have ${\mr b_i}\in{\mr\B_i}$ for $i\le k$, with idempotent orbits, such that ${\mr b_i}=\sigma_i\,{\mr b_{i+1}}$ and $\sharp_i$ hold for all $i<k$.
   We show how to find ${\mr b_{k+1}}$.
   
   We prove that ${\mr b_k}$ and the set ${\mr\B_{k+1}}\cap\sigma_k^{-1}[{\mr b_k}]$, which below we denote by $\mrA$ for short, satisfy the assumptions of Proposition~\ref{prop_minimal_existence2}.
   The proof of the idempotency of $\mrA$ is left to the reader.
   We prove that ${\mr b_k}\cdot_G\mrA\,\subseteq\, \mrA$, the proof of $\mrA\cdot_G{\mr b_k}\,\subseteq\, \mrA$ is similar.
   As ${\mr b_k}\cdot_G{\mr\B_{k+1}}\,\subseteq\,{\mr\B_{k+1}}$ by nicety, it suffices to prove that ${\mr b_k}\cdot_G\sigma_k^{-1}[{\mr b_k}]$ is contained in $\sigma_k^{-1}[{\mr b_k}]$.
   This latter inclusion holds because, by the induction hypothesis,
   
   \ceq{\hfill\sigma_k\Big[{\mr b_k}\cdot_G\sigma_k^{-1}[{\mr b_k}]\Big]}
   {=}
   {\sigma_k[{\mr b_k}]\cdot_G{\mr b_k}}
   \medrel{=}
   \rbox{{\mr b_{k-1}}\cdot_G{\mr b_k}}
   \medrel{=}
   \rbox{({\mr b_k})_G.}

   Now we apply Proposition~\ref{prop_minimal_existence2} to find an idempotent ${\mr b_{k+1}}\in {\mr b_k}\cdot_G\mrA\cdot_G{\mr b_k}$.
   Therefore $\sharp_k$ is satisfied.
   Moreover $\sigma_k\,{\mr b_{k+1}}\in ({\mr b_k})_G$ by Proposition~\ref{prop_HJ_tecnical}, hence we can assume ${\mr b_k}=\sigma_k\,{\mr b_{k+1}}$ as claimed above.

   Finally, as in the proof of Theorem~\ref{thm_Hindman}, the required chain $\bar a$ is obtained from a coheir sequence of a global coheir of $\tp({\mr b_n}/G)$. 
\end{proof}

\begin{remark}\label{rem_Gowers}
  The lemma above continues to hold, with essentially the same proof, if for $\Sigma$ we take a set of the form 
  
\ceq{\hfill\Sigma}{=}{\bigcup^{n-1}_{i=1}\ \Sigma_i\circ\dots\circ\Sigma_{n-1}}

   where

   \ceq{\hfill\Sigma_i\circ\dots\circ\Sigma_{n-1}}{=}{\Big\{\sigma_i\circ\dots\circ\sigma_{n-1}\ :\  \sigma_i\in\Sigma_i,\dots,\sigma_{n-1}\in\Sigma_{n-1}\Big\}}

   and where $\Sigma_i$ are some finite sets of homomorphisms $G_{i+1}\to G_i$ such that for every $g\in G_i$ the set $\Sigma_i^{-1}[g]$ is non-empty.\QED
\end{remark}

Let $G_i$ be the set of functions $a:\omega\to\{0,\dots,i\}$ with finite \emph{support} that is, the set \emph{$\supp(a)$}$=\{x\in\omega\,:\,a\,x\neq0\}$ is finite.
We introduce a semigroup operation on $G_i$ by defining 
$(a{\cdot}b)\,x=\max\{ax,bx\}$.
This makes $G_i$ a nice subsemigroup of $G_{i+1}$.

\begin{corollary}[(Gowers Partition Theorem)]\label{corol_Gowers}
   With $G_i$ as above, let $\sigma_i:G_{i+1}\to G_i$ be homomorphisms and let $\Sigma$ be as in Lemma~\ref{lem_Gowers}.
   Then for every finite coloring of $G_n$ there is an 
   $\bar a\in\big(G_n\sm G_{n-1}\big)^\omega$ such that $\fp^{\Sigma}\bar a\sm G_{n-1}$ is monochromatic and $\supp(a_i)<\supp(a_{i+1})$.
\end{corollary}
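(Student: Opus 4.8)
The plan is to deduce Corollary~\ref{corol_Gowers} from Lemma~\ref{lem_Gowers} by exhibiting a suitable relation $\cpaw$ on $G_n$ and checking the hypotheses of the lemma. First I would verify the semigroup structure claims: that $(a\cdot b)\,x=\max\{ax,bx\}$ is associative (clear, since $\max$ is), that each $G_i$ is closed under this operation, and that $G_i$ is a \emph{nice} subsemigroup of $G_{i+1}$ — the latter because $a\cdot b\in G_i$ means $\max\{ax,bx\}\le i$ for all $x$, which forces $ax\le i$ and $bx\le i$, i.e. $a,b\in G_i$. This is exactly the niceness condition needed so that $\Sigma=\{\sigma_i\circ\dots\circ\sigma_{n-1}:0<i<n\}$ consists of maps with the right target and so that Lemma~\ref{lem_Gowers} applies.

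Next I would define $\cpaw$ on $G_n$ by declaring $a\cpaw b$ iff $\supp(a)<\supp(b)$, meaning every element of $\supp(a)$ is strictly less than every element of $\supp(b)$ (with the convention that this holds vacuously when $\supp(a)=\varnothing$). Then I must check the two structural conditions. For \cpawdot-closedness: if $a\cpaw b\cpaw c$ then $\supp(a)<\supp(b)$ and $\supp(b)<\supp(c)$; since $\supp(b\cdot c)=\supp(b)\cup\supp(c)$, and $\supp(a)$ lies below both $\supp(b)$ and $\supp(c)$, we get $\supp(a)<\supp(b\cdot c)$, hence $a\cpaw b\cdot c$. For $\cpaw$-coveredness by $G_n\sm G_{n-1}$: given a finite $F\subseteq G_n$, the union of all supports of elements of $F$ is finite, so pick any $m$ strictly above it and let $c$ be the function that is $n$ at position $m$ and $0$ elsewhere; then $c\in G_n\sm G_{n-1}$ (its value $n$ witnesses $c\notin G_{n-1}$) and $a\cpaw c$ for every $a\in F$.

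With these checks in place, Lemma~\ref{lem_Gowers} delivers a $\cpaw$-chain $\bar a\in(G_n\sm G_{n-1})^\omega$ with $\fp^{\Sigma}\bar a\sm G_{n-1}$ monochromatic. Being a $\cpaw$-chain means $a_i\cpaw a_{i+1}$, i.e. $\supp(a_i)<\supp(a_{i+1})$ for all $i$, which is precisely the extra conclusion asked for in the corollary. So the statement follows immediately. I expect the only mildly delicate point to be confirming that the relation ``$\supp(a)<\supp(b)$'' genuinely witnesses $\cpaw$-coveredness \emph{by the correct set} $G_n\sm G_{n-1}$ rather than just by $G_n$ — one has to be slightly careful to choose the covering element with a coordinate equal to $n$ (not merely nonzero) so that it lies outside $G_{n-1}$; everything else is routine. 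No model-theoretic argument is needed here, since all the model theory is already encapsulated in Lemma~\ref{lem_Gowers}.
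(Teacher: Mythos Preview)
Your proposal is correct and follows exactly the paper's approach: the paper's entire proof is the single line ``Let $\cpaw$ be the relation $\supp(a)<\supp(b)$ and apply Lemma~\ref{lem_Gowers},'' and you have simply supplied the routine verifications (niceness, \cpawdot-closedness, $\cpaw$-coveredness by $G_n\sm G_{n-1}$) that the paper leaves implicit.
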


The homomorphisms $\sigma_i$ usually considered in the literature are so-called \textit{tetris\/} operations i.e.\@ $(\sigma_i\,a)x=\max\{a\,x-1,\ 0\}$, or generalizations thereof.
However the theorem is more general.
%
%For instance, one can use \textit{trimming\/} operations such as $(\sigma_i\,a)x= \min\{a\,x,\ i-1\}$, or a combination of these.

\begin{proof}
   Let $\cpaw$ be the relation $\supp(a)<\supp(b)$ and apply Theorem~\ref{lem_Gowers}.
\end{proof}

%%%%%%%%%%%%%%%%%%%%%%%%%%%%%%%%%%%%%%%%%%%%%%%%%%%%%%%%%%%%%%%%%%%%%%%%%%%%
%%%%%%%%%%%%%%%%%%%%%%%%%%%%%%%%%%%%%%%%%%%%%%%%%%%%%%%%%%%%%%%%%%%%%%%%%%%%
%%%%%%%%%%%%%%%%%%%%%%%%%%%%%%%%%%%%%%%%%%%%%%%%%%%%%%%%%%%%%%%%%%%%%%%%%%%%
%%%%%%%%%%%%%%%%%%%%%%%%%%%%%%%%%%%%%%%%%%%%%%%%%%%%%%%%%%%%%%%%%%%%%%%%%%%%
%%%%%%%%%%%%%%%%%%%%%%%%%%%%%%%%%%%%%%%%%%%%%%%%%%%%%%%%%%%%%%%%%%%%%%%%%%%%
%%%%%%%%%%%%%%%%%%%%%%%%%%%%%%%%%%%%%%%%%%%%%%%%%%%%%%%%%%%%%%%%%%%%%%%%%%%%
%%%%%%%%%%%%%%%%%%%%%%%%%%%%%%%%%%%%%%%%%%%%%%%%%%%%%%%%%%%%%%%%%%%%%%%%%%%%
\section{References}
\begin{biblist}[]\normalsize
\bib{AC}{article}{
   author={Agostini, Claudio},
   author={Colla, Eugenio},
   title={Partition regularity for semigroups},
   note={In preparation}
}
\bib{ACG}{article}{
   author={Andrews, Uri},
   author={Conant, Gabriel},
   author={Goldbring, Isaac},
   title={Definable sets containing productsets in expansions of groups},
   journal={J. Group Theory},
   volume={22},
   date={2019},
   % number={1},
   % pages={63--82},
   % issn={1433-5883},
   % review={\MR{3895638}},
   % doi={10.1515/jgth-2018-0038},
}
\bib{BBH}{article}{
   author={Bergelson, Vitaly},
   author={Blass, Andreas},
   author={Hindman, Neil},
   title={Partition theorems for spaces of variable words},
   journal={Proc. London Math. Soc. (3)},
   volume={68},
   date={1994},
   number={3},
   pages={449--476},
%    issn={0024-6115},
%    doi={10.1112/plms/s3-68.3.449},
}
\bib{Blass}{article}{
   author={Blass, Andreas},
   title={Ultrafilters: where topological dynamics $=$ algebra $=$ combinatorics},
   journal={Topology Proc.},
   volume={18},
   date={1993},
   pages={33--56},
   note={Also \href{http://arxiv.org/abs/math/9309208}{arXiv:math/9309208}}
}
\bib{Mauro2}{article}{
   author={Di Nasso, Mauro},
   title={Hypernatural numbers as ultrafilters},
   conference={
      title={Nonstandard analysis for the working mathematician},
   },
   book={
      publisher={Springer, Dordrecht},
   },
   date={2015},
   pages={443--474},
}
\bib{Mauro}{article}{
   author={Di Nasso, Mauro},
   author={Goldbring, Isaac},
   author={Lupini, Martino},
   title={Nonstandard Methods in Ramsey Theory and Combinatorial Number Theory},
   date={2018},
   note={\href{https://arxiv.org/abs/1709.04076}{ArXiv:1709.04076}}
}
\bib{DK}{book}{
   author={Dodos, Pandelis},
   author={Kanellopoulos, Vassilis},
   title={\href{http://users.uoa.gr/~pdodos/Publications/RT.pdf}
         {Ramsey theory for product spaces}},
   series={Mathematical Surveys and Monographs},
   volume={212},
   publisher={American Mathematical Society},
%   publisher={American Mathematical Society, Providence, RI},
   date={2016},
}
\bib{Gowers}{article}{
   author={Gowers, W. T.},
   title={Lipschitz functions on classical spaces},
   journal={European J. Combin.},
   volume={13},
   date={1992},
   number={3},
   pages={141--151},
   % issn={0195-6698},
   % doi={10.1016/0195-6698(92)90020-Z},
}
\bib{HJ}{article}{
   author={Hales, A. W.},
   author={Jewett, R. I.},
   title={Regularity and positional games},
   journal={Trans. Amer. Math. Soc.},
   volume={106},
   date={1963},
   pages={222--229},
}
\bib{Hindman}{article}{
   author={Hindman, Neil},
   title={Finite sums from sequences within cells of a partition of $N$},
   journal={J. Combinatorial Theory Ser. A},
   volume={17},
   date={1974},
   pages={1--11},
}
\bib{Koppelberg}{article}{
   author={Koppelberg, Sabine},
   title={The Hales-Jewett theorem via retractions},
   booktitle={Proceedings of the 18th Summer Conference on Topology and its
   Applications},
   journal={Topology Proc.},
   volume={28},
   date={2004},
   number={2},
   pages={595--601},
}
\bib{Lupini}{article}{
   author={Lupini, Martino},
   title={Actions on semigroups and an infinitary Gowers-Hales-Jewett Ramsey
   theorem},
   journal={Trans. Amer. Math. Soc.},
   volume={371},
   date={2019},
   % number={5},
   % pages={3083--3116},
   % issn={0002-9947},
   % review={\MR{3896106}},
   % doi={10.1090/tran/7337},
}
\bib{Milliken}{article}{
   author={Milliken, Keith R.},
   title={Ramsey's theorem with sums or unions},
   journal={J. Combinatorial Theory Ser. A},
   volume={18},
   date={1975},
   pages={276--290},
}
\bib{Newelski1}{article}{
   author={Newelski, Ludomir},
   title={Topological dynamics of definable group actions},
   journal={J. Symbolic Logic},
   volume={74},
   date={2009},
   number={1},
   pages={50--72}
}
\bib{Newelski2}{article}{
   author={Newelski, Ludomir},
   title={Topological dynamics of stable groups},
   journal={J. Symb. Log.},
   volume={79},
   date={2014},
   number={4},
   pages={1199--1223},
}
\bib{Shelah}{article}{
   author={Shelah, Saharon},
   title={Primitive recursive bounds for van der Waerden numbers},
   journal={J. Amer. Math. Soc.},
   volume={1},
   date={1988},
   number={3},
   pages={683--697},
}
\bib{Taylor}{article}{
   author={Taylor, Alan D.},
   title={A canonical partition relation for finite subsets of $\omega $},
   journal={J. Combinatorial Theory Ser. A},
   volume={21},
   date={1976},
   number={2},
   pages={137--146},
}
\bib{TZ}{book}{
   author={Tent, Katrin},
   author={Ziegler, Martin},
   title={A course in model theory},
   series={Lecture Notes in Logic},
   volume={40},
   publisher={Association for Symbolic Logic, Cambridge University Press},
   date={2012},
   pages={x+248},
   %isbn={978-0-521-76324-0},
   %doi={10.1017/CBO9781139015417},
}
\bib{DZ}{book}{
   author={Zambella, Domenico},
   title={\href{https://github.com/domenicozambella/creche/raw/master/creche.pdf}
         {A cr\`eche course in model theory}},
   date={2018}, 
   series={AMS Open Math Notes}, 
   note={(The link points to the github version)}
}
\end{biblist}

\end{document}